\documentclass[11pt]{article}
\usepackage{fullpage}
\usepackage[english]{babel}
\usepackage[utf8x]{inputenc}
\usepackage[T1]{fontenc}
\usepackage{amsmath}
\usepackage{amssymb}
\usepackage{amsthm}
\usepackage{bbm}
\usepackage{parskip}
\usepackage{thm-restate}
\usepackage{booktabs}
\usepackage{array}

\usepackage[lined,boxed,ruled,norelsize,algo2e,linesnumbered]{algorithm2e}

\usepackage{hyperref}
\hypersetup{
	colorlinks=true,
	linkcolor=blue!70!black,
	citecolor=blue!70!black,
	urlcolor=blue!70!black
}
\usepackage{cleveref}
\usepackage{graphicx}

\newtheorem{theorem}{Theorem}
\newtheorem{lemma}{Lemma}
\newtheorem{fact}{Fact}

\newtheorem{definition}{Definition}
\newtheorem{corollary}{Corollary}
\newtheorem{proposition}{Proposition}
\newtheorem{example}{Example}
\newtheorem{remark}{Remark}
\newtheorem{problem}{Problem}
\newtheorem{assumption}{Assumption}
\allowdisplaybreaks

\newcommand{\defeq}{:=}

\newcommand{\norm}[1]{\left\lVert#1\right\rVert}
\newcommand{\norms}[1]{\lVert#1\rVert}

\newcommand{\inprod}[2]{\left\langle#1, #2\right\rangle}

\newcommand{\eps}{\epsilon}
\newcommand{\lam}{\lambda}

\newcommand{\xset}{\mathcal{X}}

\newcommand{\R}{\mathbb{R}}

\newcommand{\N}{\mathbb{N}}
\newcommand{\diag}[1]{\textbf{\textup{diag}}{\left(#1\right)}}
\newcommand{\half}{\frac{1}{2}}

\newcommand{\E}{\mathbb{E}}

\newcommand{\Var}{\textup{Var}}

\newcommand{\Nor}{\mathcal{N}}

\newcommand{\id}{\mathbf{I}}

\newcommand{\dd}{\textup{d}}
\usepackage{xcolor}
\definecolor{burntorange}{rgb}{0.8, 0.33, 0.0}

\newcommand{\poly}{\textup{poly}}
\newcommand{\polylog}{\textup{polylog}}
\newcommand{\Par}[1]{\left(#1\right)}
\newcommand{\Brack}[1]{\left[#1\right]}
\newcommand{\Brace}[1]{\left\{#1\right\}}

\newcommand{\Cov}{\mathbf{Cov}}

\newcommand{\ma}{\mathbf{A}}
\newcommand{\mb}{\mathbf{B}}
\newcommand{\mc}{\mathbf{C}}

\newcommand{\mi}{\mathbf{I}}

\newcommand{\mpp}{\mathbf{P}}

\newcommand{\mr}{\mathbf{R}}

\newcommand{\va}{\mathbf{a}}

\newcommand{\vc}{\mathbf{c}}

\newcommand{\vh}{\mathbf{h}}

\newcommand{\vm}{\mathbf{m}}

\newcommand{\vr}{\mathbf{r}}
\newcommand{\vs}{\mathbf{s}}

\newcommand{\vv}{\mathbf{v}}
\newcommand{\vw}{\mathbf{w}}
\newcommand{\vx}{\mathbf{x}}
\newcommand{\vy}{\mathbf{y}}
\newcommand{\vz}{\mathbf{z}}

\newcommand{\msig}{\boldsymbol{\Sigma}}

\newcommand{\calA}{\mathcal{A}}

\newcommand{\calD}{\mathcal{D}}

\newcommand{\calI}{\mathcal{I}}

\newcommand{\calM}{\mathcal{M}}
\newcommand{\calN}{\mathcal{N}}

\newcommand{\calP}{\mathcal{P}}

\newcommand{\calS}{\mathcal{S}}

\newcommand{\calX}{\mathcal{X}}
\newcommand{\calY}{\mathcal{Y}}

\newcommand{\sfK}{\mathsf{K}}

\newcommand{\sfP}{\mathsf{P}}

\newcommand{\tilt}{\mathcal{T}}
\newcommand{\vhi}{\varphi}
\newcommand{\vvhi}{\boldsymbol{\vhi}}
\newcommand{\dkl}{D_{\textup{KL}}}
\newcommand{\dtv}{D_{\textup{TV}}}

\newcommand{\tpi}{\tilde{\pi}}

\newcommand{\gap}{\mathsf{gap}}
\newcommand{\Ferm}{F_{\textup{erm}}}
\newcommand{\Fpop}{F_{\textup{sco}}}
\newcommand{\simu}{\sim_{\textup{unif.}}}
\newcommand{\mecherm}{\calM_{\textup{erm}}}
\newcommand{\mechsco}{\calM_{\textup{sco}}}
\renewcommand{\epsilon}{\varepsilon}

\usepackage{mathtools}
\title{Functional Stochastic Localization}
\date{}
\author{Anming Gu\thanks{University of Texas at Austin, \texttt{anminggu@cs.utexas.edu}}
\and
Bobby Shi\thanks{University of Texas at Austin, \texttt{bhshi@utexas.edu}}
\and 
Kevin Tian\thanks{University of Texas at Austin, \texttt{kjtian@cs.utexas.edu}}}
\begin{document}

\maketitle

\begin{abstract}
Eldan's stochastic localization is a probabilistic construction that has proved instrumental to modern breakthroughs in high-dimensional geometry and the design of sampling algorithms. Motivated by sampling under non-Euclidean geometries and the mirror descent algorithm in optimization, we develop a functional generalization of Eldan's process that replaces Gaussian regularization with regularization by any positive integer multiple of a log-Laplace transform. We further give a mixing time bound on the Markov chain induced by our localization process, which holds if our target distribution satisfies a functional Poincar\'e inequality. Finally, we apply our framework to differentially private convex optimization in $\ell_p$ norms for $p \in [1, 2)$, where we improve state-of-the-art query complexities in a zeroth-order model.
\end{abstract}

\thispagestyle{empty}
\newpage
\tableofcontents
\thispagestyle{empty}
\newpage

\section{Introduction}\label{sec:introduction}
\setcounter{page}{1}
Stochastic localization, an elegant construction introduced in \cite{Eldan13}, has emerged as a key ingredient in modern analysis frameworks, enabling numerous recent breakthroughs in high-dimensional geometry and probability \cite{LeeV18,Eldan18,Klartag18,Eldan20,EldanMZ20,EldanS22,Guan24}. As a few highlights, the stochastic localization framework has led to improvements to the Kannan-Lov\'asz-Simonovitz (KLS) constant \cite{Eldan13,LeeV24,Chen21,klartagL22,JambulapatiLV22,Klartag23} and the recent resolution of Bourgain's slicing conjecture \cite{KlartagL25a} and the thin-shell conjecture \cite{KlartagL25}.
Recently, through its application to proximal sampling \cite{LeeST21, ChenCSW22} and the framework of localization schemes \cite{ChenE22}, stochastic localization has also seen applications in the design and analysis of sampling algorithms, for both continuous and discrete distributions \cite{EldanKZ22, ElAlaouiMS22, GopLL22, AnariHLVXY23}.

A standard viewpoint of stochastic localization is as solving the following problem: given a base density $\pi$ over $\R^d$, and a time parameter $\tau > 0$, construct a distribution over linear tilts $\vy_\tau \propto \R^d$,\footnote{Here and throughout, we use $\tilt_{\vv} \nu$ to mean the density that is $\propto \exp(\inprod{\vv}{\cdot})\nu$; see Section~\ref{sec:preliminaries} for notation.} such that $\pi_\tau \propto \tilt_{\vy_\tau}\exp(-\frac \tau 2 \norms{\cdot}_2^2)\pi$ equals $\pi$ in expectation, over the randomness of $\vy_\tau$. This equality holds in a pointwise sense, i.e., $\E[\pi_\tau(\vx)] = \pi(\vx)$ for all $\vx \in \R^d$. The reason this is beneficial is because regardless of how $\vy_\tau$ is realized, a large Gaussian regularization $\propto \exp(-\frac \tau 2 \norms{\cdot}_2^2)$ is induced. By using the strong geometric properties enjoyed by Gaussians, e.g., isoperimetric, Poincar\'e, or log-Sobolev inequalities, this process can then be used to reason about $\pi$ itself.

\paragraph{Functional stochastic localization.} A key feature of stochastic localization is that it primarily lends itself to tools tailored to the Euclidean geometry. This is unsurprising: Gaussians are induced by a (Euclidean) quadratic potential $\psi = \half\norm{\cdot}_2^2$, and various alternate definitions of stochastic localization (cf.\ \cite{ShiTZ25} for a recent survey) rely on stochastic differential equations (SDEs), which are based on the Euclidean construction of Brownian motion. This begs the question: is there a generalization that is more naturally equipped to handle non-Euclidean geometry, e.g., induced by a non-Euclidean norm, or even an arbitrary convex function $\psi: \R^d \to \R$?

This line of inquiry is motivated by the rich literature on non-Euclidean optimization, which has had wide-ranging applications such as sparse recovery \cite{CandesRT06}, matrix completion \cite{AgarwalNW10}, combinatorial optimization \cite{KelnerLOS14,KyngPSW19}, multi-armed bandit problems \cite{BubeckC12}, and more. Many of these applications are intimately related to the framework of \emph{mirror descent}, a ``functional'' generalization of gradient descent induced by a non-Euclidean regularizer $\psi: \R^d \to \R$. A parallel question would then be: is there a functional generalization of stochastic localization, e.g., a distribution over linear tilts $\vy_\tau \in \R^d$ that satisfies, for some convex $\psi: \R^d \to \R$, and $\pi$ a density on $\R^d$,
\begin{equation}\label{eq:fsl}\E\Brack{\pi_\tau} = \pi, \text{ pointwise on } \R^d,\text{ where } \pi_\tau \defeq \tilt_{\vy_\tau}\exp{\Par{-\tau\psi}}\pi?\end{equation}

The primary contribution of our work is to investigate this question, and to construct a generalized process realizing \eqref{eq:fsl} for structured $\psi$. We term our construction \emph{functional stochastic localization} (to contrast with \cite{Eldan13}, henceforth referred to as \emph{(Euclidean) stochastic localization}). 

\paragraph{Applications to sampling.} While we believe our construction is interesting in its own right, our primary motivation is applications to designing sampling algorithms under non-Euclidean regularity.
Since the characterization of sampling as optimization over the space of measures \cite{JordanKO98}, sampling algorithms for continuous distributions have drawn heavy inspiration from convex optimization \cite{Dalalyan16,Dalalyan17,Wibisono18,Wibisono19,VempalaW19,DurmusMM19,MaCCFBJ21,NitandaWS22} (cf.\ \cite{Chewi23} for a thorough discussion).
However, compared to sampling in Euclidean spaces, development of non-Euclidean samplers has lagged behind. The sampling analog of mirror descent is to reweight Brownian motion by $\nabla^2 \psi$, e.g., in the mirror Langevin dynamics \cite{HsiehKRC18,ChewiLLMRS20}. 
Several works have considered the corresponding discretization \cite{HsiehKRC18,ZhangPFP20,AhnC21,Jiang21,LiTVW22}; however, they often require non-standard assumptions 
to achieve vanishing bias \cite{LiTVW22}, or only discretize the drift but not the diffusion \cite{AhnC21,Jiang21,GuK25}. Other approaches include the discretized Riemannian Langevin dynamics, which requires exact sampling from the manifold \cite{GatmiryV22,LiE23}, or using a filter to obtain an unbiased algorithm \cite{SrinivasanWW24,SrinivasanWW25}, which again requires additional regularity.

Recently, \cite{GopiLLST25} introduced a complementary approach via a functional variant of the proximal sampler (an algorithmic variant of Euclidean stochastic localization), and studied its use in sampling from non-Euclidean spaces. This approach was based on the theory of the \emph{log-Laplace transform} $\vhi^\sharp$ (LLT, Definition~\ref{def:llt}), a probabilistic analog of the \emph{Fenchel transform} $\vhi^*$ central to mirror descent. Moreover, \cite{GopiLLST25} showed that LLTs satisfy useful properties, e.g., self-concordance and strong convexity-smoothness duality (Facts~\ref{fact:llt-derivatives},~\ref{fact:llt-smooth-convex}) that make them amenable to applications similar to uses of mirror descent. 
The construction of \cite{GopiLLST25} can be rephrased in terms of \eqref{eq:fsl}, where $\psi$ is an LLT, but their framework only permitted taking $\tau = 1$. 
Moreover, the proof strategy of \cite{GopiLLST25} led to a quadratic overhead in the mixing time of their sampler compared to Euclidean counterparts, opening the door to a stronger and more flexible approach.\footnote{We expand on this point in Section~\ref{ssec:approach}. The conference version appeared at COLT 2023 \cite{GopiLLST23}, but contained an error. This was later fixed in the third arXiv version, which led to a quadratic overhead, cf.\ their Section 1.4.}

\subsection{Our approach}\label{ssec:approach}

We develop a new approach to the construction of processes matching \eqref{eq:fsl}, when $\psi \defeq \vhi^\sharp$ is a log-Laplace transform (Definition~\ref{def:llt}). Our construction is summarized and analyzed in Section~\ref{sec:functional_stochastic_localization}, and carefully uses structural facts about LLTs and \emph{exponential families}, a parametric family of high-dimensional distributions, to argue that the martingale property \eqref{eq:fsl} holds.

\paragraph{Defining functional stochastic localization.} Our framework is motivated by and related to \cite{GopiLLST25}, but substantially broadens its flexibility. To better explain this relationship, it is helpful to outline a typical mixing-time result for Euclidean proximal samplers. Let $\pi$ be a density on $\R^d$ that satisfies an $\alpha$-Poincar\'e inequality (PI, Definition~\ref{def:vpi}). The proximal sampler of \cite{LeeST21} can be viewed as performing Gibbs sampling on an extended space, defined by a random tilt $\vy_\tau$ at time $\tau > 0$, and a particle $\vx \in \R^d$. The mixing time of the proximal sampler (in $\chi^2$ divergence) then scales $\propto \frac \tau \alpha$ \cite{LeeST21, ChenCSW22, ChenE22}. This leads to a tradeoff: larger $\tau$ leads to slower mixing, but induces additional, helpful, Gaussian regularization in the tilted densities $\pi_\tau$. Choosing $\tau$ to balance the cost of Gibbs sampling with the mixing time then enables a host of applications.

The ability to handle a range of $\tau$ is thus clearly useful. Unfortunately, previous approaches to stochastic localization were firmly continuous, letting $\tau \in \R$ index time in a SDE, and defining $\vy_\tau$ via infinitesimal convolution. The challenge in extending this approach beyond the Euclidean setting is that under minimal structure, infinitesimal convolutions necessarily lead to Gaussians by the central limit theorem. Previously, \cite{GopiLLST25} sidestepped this issue by only permitting $\tau = 1$; moreover, their main result (cf.\ Theorem 1, \cite{GopiLLST25}) has a mixing time that, in our language, scales with a suboptimal dependence $\propto \frac 1 {\alpha^2}$. 
Thus, it seems that a new perspective is necessary to achieve a nontrivial range of $\tau$ in non-Euclidean proximal sampling.

Our framework allows for taking arbitrarily large $\tau$, with a twist: although the underlying space $\R^d$ is continuous, $\tau \in \N$ in our process now indexes a discrete time. We then construct the process defining $\vy_\tau$ in \eqref{eq:fsl} using $\tau$ convolved draws from an exponential family (Algorithm~\ref{alg:localization}, see also the alternate definition of our process in \eqref{eq:llt-loc-process}). This construction is motivated by an information-theoretic viewpoint of stochastic localization as posterior sampling, given progressively denoised observations from a Gaussian channel \cite{AlaouiM22} (see also Section 3, \cite{ShiTZ25}). Our key observation is that, unlike the SDE-based definitions of stochastic localization, the perspective based on posterior sampling extends cleanly to discrete time, where $\tau \in \N$ is the number of observations. We formalize these ideas in Section~\ref{sec:functional_stochastic_localization} by leveraging properties of exponential families, where we prove that Algorithm~\ref{alg:localization} meets the requirement \eqref{eq:fsl} and is a localization process in the sense of \cite{ChenE22}.

\begin{algorithm2e}[t!]
\caption{Sampling from $\pi_\tau$}
\label{alg:localization}
\DontPrintSemicolon
\textbf{Input: }density $\pi$, $\psi: \R^d \to \R$ the LLT (Definition~\ref{def:llt}) of convex $\vhi: \R^d \to \R$ \;
$\vy_0\leftarrow \mathbf{0}$ \;
\For{$j=0, \dots, \tau-1$}{
    $\vz_j \sim \mathcal{T}_{\vy_j}\exp(-j\psi)\pi$ \; 
    $\vw_j\sim \mathcal{T}_{\vz_j}\exp(-\vhi)$\tcp*{When $\vhi = \psi = \half\norm{\cdot}_2^2$, this is equivalently $\vw_j \sim \Nor(\vz_j, \id_d)$}
    $\vy_{j+1}\leftarrow \vy_j+\vw_j$
}
$\vx \sim \mathcal{T}_{\vy_{\tau}}\exp(-\tau\psi)\pi$ \;
\textbf{Return:} $\vx$
\end{algorithm2e}

\paragraph{Non-Euclidean sampling.} The remainder of our paper analyzes the mixing time of a non-Euclidean proximal sampler (Algorithm~\ref{alg:alternate}). Our specific sampler arises by combining our functional stochastic localization process with the formalism of \cite{ChenE22}, and admits the following bound.

\begin{restatable}{theorem}{restatemixing}\label{thm:mixing}
Let $\vhi: \R^d \to \R$ be convex, $\psi \defeq \vhi^\sharp$, and $\pi$ satisfy an $\alpha$-$\psi$-Poincar\'e inequality (Definition~\ref{def:vpi}).  For any $\mu_0\ll \pi$, iterate $\vx_k$ of Algorithm \ref{alg:alternate} has density $\mu_k$ satisfying
    \begin{equation}\label{eq:convergence_result}
        \chi^2(\mu_k \|\pi) \le \frac{1}{(1 + \alpha/\tau)^{2k}}\chi^2(\mu_0\|\pi).
    \end{equation}
\end{restatable}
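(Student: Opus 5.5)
The plan is to follow the localization-schemes formalism of \cite{ChenE22}. Algorithm~\ref{alg:alternate} is presumably the two-step Gibbs sampler on the joint law of $(\vx, \vy_\tau)$ induced by Algorithm~\ref{alg:localization}. From the current iterate $\vx_k$ we draw $\vy \mid \vx_k$, which is the law of $\vy_\tau$ given $\vx = \vx_k$. By the exponential-family structure this is a sum of $\tau$ i.i.d.\ draws $\vw_j \sim \tilt_{\vx_k}\exp(-\vhi)$. We then draw $\vx_{k+1} \sim \pi_\tau = \tilt_{\vy}\exp(-\tau\psi)\pi$.

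\textbf{Step 1: reversibility and the variance identity.} I first invoke the martingale property \eqref{eq:fsl} proven in Section~\ref{sec:functional_stochastic_localization}. It gives that $\pi$ is stationary and that the kernel $P = P_{\vx\to\vy}P_{\vy\to\vx}$ is reversible. For $f = \frac{\dd\mu_k}{\dd\pi}$ we then have $\chi^2(\mu_{k+1}\|\pi) = \Var_{\pi}(P f) \le \|P_{\vx \to \vy}\|^2\Var_\pi(f)$, where the last factor is the squared operator norm on mean-zero functions. So it suffices to show the one-sided contraction $\E_{\vy}[(\E_{\pi_\tau}f - \E_\pi f)^2] \le \frac{1}{1+\alpha/\tau}\Var_\pi(f)$. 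Squaring this factor gives the exponent $2k$. By the law of total variance this is equivalent to
\[\E_{\vy_\tau}\Brack{\Var_{\pi_\tau}(f)} \ge \frac{\alpha/\tau}{1+\alpha/\tau}\Var_\pi(f) = \frac{\alpha}{\alpha+\tau}\Var_\pi(f).\]

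\textbf{Step 2: a discrete-time variance decay.} I will prove this by tracking $V_j \defeq \E[\Var_{\pi_j}(f)]$ along the localization process. The quantity $M_j = \E_{\pi_j} f$ is a martingale, so $V_j - V_{j+1} = \E[(M_{j+1}-M_j)^2]$. A single step conditions on one extra observation $\vw_j \sim \tilt_{\vz}\exp(-\vhi)$ with $\vz \sim \pi_j$. This is a one-step ``proximal'' channel with LLT regularizer $\psi$ applied to the base measure $\pi_j$. I will show the one-step loss is controlled by the Poincaré-type quantity: $\E[(M_{j+1}-M_j)^2] \le \frac{1}{\alpha_j + 1}V_j$ in an appropriate sense, where $\pi_j \propto \tilt\exp(-j\psi)\pi$ inherits an $(\alpha + j)$-$\psi$-Poincaré inequality. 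This follows from Definition~\ref{def:vpi} plus the extra $j\psi$ regularization, which adds $j$ to the constant, as in the Euclidean case where adding $\frac j2\|\cdot\|^2$ adds $j$ to the Gaussian PI. Telescoping then gives
\[V_\tau \ge \prod_{j=0}^{\tau-1}\frac{\alpha+j}{\alpha+j+1}\,\Var_\pi(f) = \frac{\alpha}{\alpha+\tau}\Var_\pi(f),\]
which is exactly the bound needed in Step 1.

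\textbf{Main obstacle.} The hard part is the one-step bound in Step 2. It requires showing that conditioning on one draw from $\tilt_{\vz}\exp(-\vhi)$ reveals at most a $\frac{1}{1+\alpha_j}$ fraction of the variance. My first attempt would bound $\E[(M_{j+1}-M_j)^2]$ by a covariance form $\E\langle \Cov_{\pi_j}(f,\vx), (\nabla^2\vhi^{*}\text{-type metric}) \Cov_{\pi_j}(f,\vx)\rangle$. I would do this using the exponential-family identity that $\nabla\psi(\vx)$ is the mean of the channel and $\nabla^2\psi(\vx)$ its covariance, via Facts~\ref{fact:llt-derivatives} and \ref{fact:llt-smooth-convex}. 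I would then compare this form to $\Var_{\pi_j}(f)$ through the $\psi$-Poincaré inequality, mimicking the Euclidean proof of the proximal-sampler rate. A subsidiary point to verify is that the Poincaré constant of $\pi_j$ improves exactly additively in $j$ under the functional definition.
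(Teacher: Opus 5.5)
Your Step~1 reduction is the paper's: it suffices to prove $\Var_{\vy}[\E_{\pi_\tau}f]\le\frac{1}{1+\alpha/\tau}\Var_\pi f$. The square comes from the halves $\sfK,\sfK^\dagger$ being adjoint with equal norms on mean-zero functions (Lemma~\ref{lem:norm-subspaces}); your display should read $\norm{P_{x\to y}}^4$, which is a harmless slip.

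The gap is in Step~2. Your claim that $\pi_j\propto\tilt_{\vy_j}\exp(-j\psi)\pi$ inherits an $(\alpha+j)$-$\psi$-Poincar\'e inequality is false under a Poincar\'e hypothesis. Additivity of the constant is a strong log-concavity phenomenon: $-\log\pi\succeq\alpha\psi$ gives $-\log\pi+j\psi\succeq(\alpha+j)\psi$, and then Brascamp--Lieb applies. It fails already for $\vhi=\psi=\half\norm{\cdot}_2^2$. Take $d=1$ and $\pi=\half\Nor(-m,1)+\half\Nor(m,1)$, which satisfies a PI with some $\alpha>0$. At $\vy_1=0$ (and nearby), $\pi_1=\half\Nor(-\frac m2,\half)+\half\Nor(\frac m2,\half)$ has spectral gap $e^{-\Omega(m^2)}$, far below $1+\alpha$.

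No averaged version saves the telescoping either. For $f$ the indicator of $\{x>0\}$, a direct computation gives $V_j=\exp\Par{-\frac{jm^2}{2(j+1)}+O(\log m)}$ for $j\ge1$. So $V_2/V_1=e^{-m^2/12+O(\log m)}$, whereas your stepwise bound demands $V_2\ge\frac{\alpha+1}{\alpha+2}V_1\ge\half V_1$. Thus $V_{j+1}\ge\frac{\alpha+j}{\alpha+j+1}V_j$ is false for general $\psi$-PI targets, even though the endpoint inequality $V_\tau\ge\frac{\alpha}{\alpha+\tau}V_0$ is true.

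Under $-\log\pi\succeq\alpha\psi$ your telescoping would close: each $\pi_j$ is then $(\alpha+j)$-$\psi$-strongly log-concave, and the product is exactly $\frac{\alpha}{\alpha+\tau}$. But this still needs the one-step bound you flag as the main obstacle, and your covariance-form heuristic does not supply it. A discrete step changes $M_j$ by $\Cov_{\pi_j}(f,\dd\pi_{j+1}/\dd\pi_j)$, which involves the full likelihood ratio rather than $\Cov_{\pi_j}(f,\vx)$.

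The missing idea is to apply the Poincar\'e inequality only to $\pi$ itself, in one shot at time $\tau$, and on the opposite channel. For $g$ on the $\vy$-space, set $h(\vx)\defeq\E[g(\vy)\mid\vx]$ with $\vy\mid\vx\sim\tilt_{\vx}\exp(-\vhi^{*\tau})$.

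1. Differentiating the exponential family gives $\nabla h(\vx)=\Cov_{\vy\mid\vx}(\vy,g(\vy))$.
2. Since $\Cov_{\vy\mid\vx}(\vy)=\tau\nabla^2\psi(\vx)$, Cauchy--Schwarz yields $\nabla h(\vx)^\top(\tau\nabla^2\psi(\vx))^{-1}\nabla h(\vx)\le\Var[g(\vy)\mid\vx]$.
3. The $\alpha$-$\psi$-PI for $\pi$ (equivalently an $\frac\alpha\tau$-$\tau\psi$-PI) then gives $\Var_\pi h\le\frac\tau\alpha\E[\Var[g(\vy)\mid\vx]]$.
4. The law of total variance converts this to $\Var_\pi h\le\frac{1}{1+\alpha/\tau}\Var g$, i.e., $\norm{\sfK}^2\le\frac{1}{1+\alpha/\tau}$ on mean-zero functions.
5. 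Adjointness transfers this bound to $\sfK^\dagger$, which is exactly your Step~1 target.

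No functional inequality for the localized measures $\pi_j$ is ever needed. That is why the theorem holds under a $\psi$-PI rather than only under $\psi$-strong log-concavity.
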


By taking $\vhi = \psi = \half\norm{\cdot}_2^2$ in Theorem~\ref{thm:mixing}, we recover standard convergence rates $\propto \frac \tau \alpha$ for the Euclidean proximal sampler (e.g., Theorem 4, \cite{ChenCSW22}). Interestingly, we  even obtain the sharpest constant known in the Euclidean case, by revisiting the analysis framework of \cite{ChenE22} in the context of Gibbs sampling and giving a sharper characterization of its convergence rate in Section~\ref{sec:gibbs_sampling}. Moreover, Theorem~\ref{thm:mixing} appealingly uses the minimal assumption of a $\psi$-PI, as opposed to the stronger condition of $\psi$-strong log-concavity (cf.\ Fact~\ref{fact:brascamp_lieb}). The property of $\psi$-PI is closed under Lipschitz transformations, tensorization, and  multiplicative perturbations (by the Holley-Stroock principle), so Theorem~\ref{thm:mixing} applies to non-log-concave densities that are appropriately compatible with $\psi$. An analogous statement relying on strong log-concavity would not admit such extensions; see \cite{VempalaW19,Chewi23} for more discussion of this point.  

The heart of our proof of Theorem~\ref{thm:mixing} is Lemma~\ref{lem:dual-poincare} (developed in Section~\ref{sec:llt_alternate_sampling}), a Poincar\'e inequality for the localized measure $\pi_\tau$. We find our proof quite surprising, as it appears to be new even in the Euclidean case. Indeed, as outlined in Appendix~\ref{sec:convexity}, several other proofs of Lemma~\ref{lem:dual-poincare} in the Gaussian setting use facts that are false for general $\psi$, e.g., preserving strong log-concavity under convolutions. In Appendix~\ref{app:proof_via_conductance}, we further give a mixing time bound that leverages our new process, but uses a conductance-based convergence analysis, as in \cite{GopiLLST25}. This strategy yields weaker estimates scaling as $\approx \max\{\frac \tau \alpha, \frac \tau {\alpha^2}\}$. Thus, achieving Theorem~\ref{thm:mixing} for the full range of $\tau \in \N$ and $\alpha > 0$ required both changing the process from \cite{GopiLLST25}, and the corresponding analysis framework.

In Section \ref{ssec:dp}, we showcase the applicability of our new localization framework by deriving new rates for the problem of differentially private (DP) convex optimization in $\ell_p$ geometries, for $p \in [1, 2)$. Our oracle query complexity bounds for this problem (Corollary~\ref{cor:dp_sample}) improve quadratically upon the prior work (Theorem 2, \cite{GopiLLST25}), due to Theorem~\ref{thm:mixing}. Moreover, if granted an appropriate warm start (stated formally in Assumption~\ref{assume:warmstart}), our resulting query complexity is optimal up to logarithmic factors for the entire range of $p$, as discussed in Appendix A, \cite{GopiLLST25}.

Our work takes an important step towards extending the theory of stochastic localization to more general geometries. Perhaps the main outstanding question we leave open is whether an entropic strengthening of Theorem~\ref{thm:mixing} (convergence in KL divergence, rather than $\chi^2$) holds, as it does in the Gaussian case (Theorem 3, \cite{ChenCSW22}). This could significantly improve mixing time estimates: for example, it would make our nearly-optimal query complexity in our main DP application unconditional. Unfortunately, entropic variants of key facts used in our proof, e.g., the Brascamp-Lieb inequality (Fact~\ref{fact:brascamp_lieb}), are known to be false in general, as \cite{BobkovL00} demonstrates.

Towards realizing these goals, we prove Theorem~\ref{thm:under-conjecture}, a conditional entropic mixing result, in Appendix~\ref{app:entropy}. Our proof may be of independent interest, as it is quite different from that of Theorem~\ref{thm:mixing}. This result holds under a condition (Assumption~\ref{assume:llt}) that is true for quadratic $\psi$, and could apply to a broader family. More generally, we outline directions for future exploration in Section~\ref{sec:conclusion}, including obtaining faster runtimes when our framework is induced by certain structured $\psi$.

\subsection{Prior work}
\paragraph{Proximal sampling.} Initial work on proximal methods for sampling \cite{Pereyra16,BrosseDMP17,Bernton18,Wibisono19} required stringent assumptions or had suboptimal mixing rates. More recently, \cite{LeeST21} proposed a ``modern'' proximal sampler for strongly log-concave targets that improves upon these issues. Subsequently, several works provide convergence guarantees in various divergences and under isoperimetric conditions \cite{ChenCSW22,AltschulerC24,MitraW25,LiangMW25,Wibisono25}. The proximal sampling framework has provided state-of-the-art results for structured sampling \cite{LeeST21,GopLL22,LiangC22,FanYC23,YuanFLWC23}, sampling from non-log-concave distributions \cite{HeMBE24,HeLSZ25}, and sampling from convex bodies \cite{KookVZ24,KookV25a,KookV25b,Kook25}. Notably, all of these results built upon the Euclidean proximal sampler; our work may permit similar developments in non-Euclidean settings.
We also mention the related work \cite{HeHBE24}, who also gave a variant of the proximal sampler that complements our motivation. Their focus was sampling from heavy-tailed densities, and their framework appears to not yield a localization process in the sense of \cite{ChenE22}. Moreover, their regularity assumptions still fundamentally cater to the Euclidean geometry.

\paragraph{Localization schemes.} As discussed previously, the ideas underpinning localization schemes were first introduced as Eldan's stochastic localization \cite{Eldan13}. Recently, Chen and Eldan \cite{ChenE22} unified stochastic localization and spectral independence, a method for analyzing discrete Markov chains, under the localization scheme framework, and it has been used to achieve improve guarantees for sampling from Markov chains \cite{ElAlaouiMS22,BentonDDD24,HuangMP24,AnariKV24,HuangMRW25,ChenLYZ25}. Other works that use the LLT for sampling via stochastic localization include \cite{EldanS22,AnariHLVXY23}.

Amongst extensions of the stochastic localization process in the literature, the most relevant to our construction is by \cite{MikulincerP24}, who also consider non-Gaussian reweightings in (approximate) stochastic localization. Their work leverages different structure than our main result: instead of regularizing the density by an LLT, they regularize via a low-degree polynomial. 

\paragraph{Renormalization.} Renormalization is yet another perspective on the derivation of functional inequalities, developed in the physics literature \cite{wilson1974renormalization, polchinski1984renormalization}; for a recent survey, see \cite{BauerschmidtBD24}.  The formalism associates with a measure a flow of measures in terms of a renormalized potential; this structure can be used to prove functional inequalities under a multiscale generalization of the Bakry-\'Emery criterion.  In fact, this perspective is dual to stochastic localization: at a high level, stochastic localization focuses on the stochastic evolution of a random measure, while renormalization focuses on a Markov process and its associated renormalized potential.  This framework has been successively employed to derive log-Sobolev inequalities in discrete systems, including in Euclidean field theory \cite{Kupiainen2016renormalization, serres2022behaviorpoincareconstantpolchinski} and in Ising models \cite{BAUERSCHMIDT2019simple, Bauerschmidt2024logsobolev}. We draw upon this literature in Section~\ref{sec:functional_stochastic_localization}.

\section{Preliminaries}\label{sec:preliminaries}

We use lowercase bold font, e.g., $\vx$ to denote vectors, and uppercase bold font, e.g., $\ma$ to denote matrices or linear operators. Uppercase letters, e.g., $X,Y$ denote random variables.  Throughout, $\tau \in \N$ will be reserved to denote a discrete time index and $\alpha>0$ will be reserved for strong convexity and Poincar\'e parameters.  Sans serif uppercase letters, e.g., $\mathsf{P}$ are used to denote Markov operators on suitable $L^2$ spaces, while sans serif lowercase letters, e.g., $\mathsf{p}$ are used to denote the corresponding probability kernels.

\paragraph{Probability.}
For a density $\pi$ supported on $\calX$, we define $\pi(S)\defeq \Pr_{\vx\sim \pi}[\vx\in S]$. Let $\mu, \nu$ be two densities such that $\mu \ll \nu$.
    The $\chi^2$-divergence of $\mu$ with respect to $\nu$ is $\chi^2(\mu \| \nu)\defeq \int (\frac{\dd \mu}{\dd\nu}-1)^2 \,\dd\nu$, and the Kullback-Leibler (KL) divergence of $\mu$ with respect to $\nu$ is $\dkl(\mu \| \nu)\defeq \int \log (\frac{\dd\mu}{\dd\nu}) \,\dd\mu$.

We use throughout the law of total variance: if $X, Y$ are random variables on the same probability space and the variance of $Y$ is finite, then
\begin{equation}\label{eq:total-var}
    \operatorname{Var}[Y]=\mathbb{E}[\operatorname{Var}[Y\mid X]]+\operatorname{Var}[\mathbb{E}[Y\mid X]].
\end{equation}

We use $\tilde{\pi}$ to denote general joint densities of random variables $(X, Y)$.  In this way, $\tilde{\pi}^X, \tpi^Y$ are the marginal densities, and $\tpi^{Y\mid X}, \tpi^{X\mid Y}$ are the conditional densities.

\paragraph{Optimization.} When $\norm{\cdot}$ is a norm on $\R^d$, $\norm{\cdot}_* \defeq \sup_{\norm{\vx}\le 1} \inprod{\vx}{\cdot}$ denotes the dual norm. We say that twice-differentiable $f: \R^d \to \R$ is $\alpha$-strongly convex with respect to $\norm{\cdot}$ if $\nabla^2 f(\vx)[\vv, \vv] \ge \alpha \norm{\vv}^2$ for all $\vx, \vv \in \R^d$, and if $\nabla^2 f(\vx)[\vv, \vv] \le \beta \norm{\vv}^2$, we say $f$ is $\beta$-smooth with respect to $\norm{\cdot}$. When $\ma$ and $\mb$ are matrices in finite dimension, we use $\ma \succeq \mb$ to mean that $\ma - \mb$ is positive semidefinite; when $f$ and $g$ are functions on $\R^d$, we use $f \succeq g$ to mean $f - g$ is convex.

\paragraph{Log-Laplace transforms.} A central object in our development is the log-Laplace transform.

\begin{definition}\label{def:llt}
    The \emph{log-Laplace transform (LLT)} of $\varphi:\R^d\to \R$ is defined as \[
    \varphi^\sharp(\vx) \defeq \log\Par{\int \exp\Par{\inprod{\vx}{\vy}-\varphi(\vy)}\dd \vy}.
    \]
We will typically use $\psi \defeq \vhi^\sharp$ as shorthand when $\varphi$ is fixed.
\end{definition}

For a measure $\nu$, define the tilted probability density at $\va$ as
\begin{equation}
        \mathcal{T}_\va \nu(\vx)\defeq \frac{\exp(\inprod{\va}{\vx}) \nu(\vx)}{\int \exp(\inprod{\va}{\vz}) \nu(\vz)\,\dd\vz}.
\end{equation}
In this way, \begin{equation}
    \tilt_\vx\exp(-\vhi(\vy)) = \exp\Par{\inprod{\vx}{\vy}-\varphi(\vy)-\psi(\vx)},
\end{equation}
which is indeed a probability density as the normalization constant is exactly given by $\exp(-\psi)$. More generally, if $f$ is any integrable nonnegative function over $\R^d$, we let $\tilt_{\vx} f$ be the density that is $\propto \exp(\inprod{\vx}{\cdot}) f$, where we use $\propto$ to indicate proportionality.

We now recall some basic facts about the LLT, due to \cite{GopiLLST25}. The LLT $\psi$ at $x$ is the cumulant-generating function of the distribution $\tilt_\vx\exp(-\vhi)$, which means that $\psi$ is infinitely-differentiable and that $\nabla^k\psi$ is the $k$th cumulant tensor of $\tilt_\vx\exp(-\vhi)$. 

\begin{fact}[LLT derivatives, Lemma 3, {\cite{GopiLLST25}}]\label{fact:llt-derivatives}
    For any $\vx,\vh\in \R^d$, we have \begin{align*}
        \nabla \psi(\vx) &= \mu(\tilt_\vx\exp(-\vhi)) \defeq \E_{\vy\sim \tilt_\vx\exp(-\vhi)}[\vy],\\
        \nabla^2\psi(\vx) &= \Cov(\tilt_\vx\exp(-\vhi)) \defeq \E_{\vy\sim \tilt_\vx\exp(-\vhi)}\Brack{(\vy-\mu(\tilt_\vx\exp(-\vhi))(\vy-\mu(\tilt_\vx\exp(-\vhi))^\top},\\
         \nabla^3\psi(\vx)[\vh,\vh,\vh] &= \E_{\vy\sim \tilt_\vx\exp(-\vhi)}\Brack{\inprod{\vy-\mu(\tilt_\vx\exp(-\vhi))}{\vh}^3}.
    \end{align*}
\end{fact}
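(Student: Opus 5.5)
The plan is to differentiate the log-partition function under the integral sign and identify each derivative with a cumulant of the tilted family. Write $Z(\vx) \defeq \int \exp(\inprod{\vx}{\vy} - \vhi(\vy))\,\dd\vy$, so $\psi = \log Z$. First I will justify that $Z$ is finite and smooth on the region of interest, with derivatives of every order obtained by passing $\nabla_\vx$ inside the integral. Finiteness of $Z(\vx)$ for each $\vx$ is implicitly assumed whenever $\psi$ is real-valued, since the paper treats $\psi: \R^d \to \R$. On the interior of the domain where $Z<\infty$, the standard exponential-family argument applies: for $\vx$ in a small ball $B$, the bound $|\vy|^k e^{\inprod{\vx}{\vy}} \le C_k\sum_{\pm,i} e^{\inprod{\vx_0 \pm \delta \mathbf{e}_i}{\vy}}$ provides an integrable dominating function, and dominated convergence then justifies the interchange. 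I expect this to be the only nonroutine step, namely choosing $\delta$ so that the shifted points remain in the domain. Everything else is calculus.

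Next, differentiating gives $\nabla Z(\vx) = \int \vy\, e^{\inprod{\vx}{\vy}-\vhi(\vy)}\dd\vy$. Dividing by $Z$, and using $\tilt_\vx\exp(-\vhi)(\vy) = \exp(\inprod{\vx}{\vy}-\vhi(\vy)-\psi(\vx))$, yields $\nabla\psi = \E[\vy] = \mu(\tilt_\vx\exp(-\vhi))$. For the higher derivatives, it is cleanest to fix $\vh$ and consider $g(t) \defeq \psi(\vx+t\vh)$. This function is the cumulant generating function of the scalar $\inprod{\vh}{Y} - \inprod{\vh}{\mu}$ under $Y\sim\tilt_\vx\exp(-\vhi)$, after subtracting the linear term. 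The standard relations between moments and cumulants then give $g''(0) = \E[\inprod{\vy-\mu}{\vh}^2]$ and $g'''(0) = \E[\inprod{\vy-\mu}{\vh}^3]$. The third cumulant equals the third central moment, since $\kappa_3 = m_3 - 3m_2 m_1 + 2m_1^3$ reduces to $m_3$ once the variable is centered.

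Finally, I will polarize. The quadratic form $\vh \mapsto g''(0)$ determines the symmetric matrix $\nabla^2\psi(\vx)$, and that matrix equals $\Cov(\tilt_\vx\exp(-\vhi))$. The third-order statement is only claimed on the diagonal $[\vh,\vh,\vh]$, so it is exactly the computation of $g'''(0)$ and needs no further argument.
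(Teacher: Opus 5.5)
Your proposal is correct and takes essentially the paper's route. The paper gives no proof beyond citing Lemma 3 of \cite{GopiLLST25} and observing that $\psi$ is the cumulant-generating function of $\tilt_\vx\exp(-\vhi)$, so that $\nabla^k\psi$ is its $k$th cumulant tensor; your dominated-convergence justification (which needs the ball radius on the order of $\delta/\sqrt{d}$ for your stated bound to hold), the one-dimensional restriction $g(t)=\psi(\vx+t\vh)$, and polarization for the Hessian fill in the routine details the paper leaves implicit.
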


\begin{fact}[Lemmas 4--6, {\cite{GopiLLST25}}]\label{fact:llt-smooth-convex}
    The following hold. 
    \begin{enumerate}
        \item[(1)] If $\varphi$ is convex, then $\psi$ is self-concordant.
        \item[(2)] If $\varphi$ is $L$-smooth with respect to $\norm{\cdot}_*$, then $\psi$ is $\frac{1}{L}$-strongly convex with respect to $\norm{\cdot}$.
        \item[(3)] If $\varphi$ is $\frac1L$-strongly convex with respect to $\norm{\cdot}_*$, then $\psi$ is $L$-smooth with respect to $\norm{\cdot}$.
    \end{enumerate}
\end{fact}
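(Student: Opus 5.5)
The three claims all follow from Fact~\ref{fact:llt-derivatives}. It identifies $\nabla^2\psi(\vx)$ and $\nabla^3\psi(\vx)$ with the covariance and third central moment tensor of $\nu_\vx \defeq \tilt_\vx\exp(-\vhi)$. This density is $\propto \exp(-V_\vx)$ with potential $V_\vx(\vy) \defeq \vhi(\vy) - \inprod{\vx}{\vy}$, and $\nabla^2 V_\vx = \nabla^2\vhi$. So each claim becomes a moment inequality for a density whose potential has the same Hessian as $\vhi$. I would assume $\vhi$ is twice differentiable and handle the general case by a standard smoothing approximation.

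\emph{For (1)}, fix $\vx, \vh$ and let $X \defeq \inprod{\vy - \mu(\nu_\vx)}{\vh}$ with $\vy \sim \nu_\vx$. Since $\vhi$ is convex, $\nu_\vx$ is log-concave, so by Pr\'ekopa--Leindler the one-dimensional law of $X$ is log-concave and centered. Self-concordance then reduces to the scalar inequality
\[
\left|\E X^3\right| \le 2\Par{\E X^2}^{3/2}
\quad\text{for centered log-concave } X .
\]
This is a known moment comparison, and the constant $2$ is attained by a centered exponential. I would prove it by the usual localization or extremal argument: among log-concave laws with fixed variance, the skewness $\E X^3/(\E X^2)^{3/2}$ is maximized by one-sided exponentials. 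Alternatively I would cite this classical inequality. I expect this step to be the main obstacle, since getting the sharp constant $2$ needs a genuine extremality argument rather than a soft bound.

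\emph{For (2)}, I would use a Cram\'er--Rao type integration by parts. For a fixed vector $\mathbf{g}$, let $Y \sim \nu_\vx$. Then $\E[\inprod{\mathbf{g}}{\nabla V_\vx(Y)}\,(\inprod{\vh}{Y} - \E\inprod{\vh}{Y})] = \inprod{\mathbf{g}}{\vh}$, and $\E\inprod{\mathbf{g}}{\nabla V_\vx(Y)}^2 = \E\,\nabla^2\vhi(Y)[\mathbf{g},\mathbf{g}]$. Cauchy--Schwarz combined with $L$-smoothness of $\vhi$ with respect to $\norm{\cdot}_*$ gives
\[
\inprod{\mathbf{g}}{\vh}^2 \le \nabla^2\psi(\vx)[\vh,\vh]\cdot L\norm{\mathbf{g}}_*^2 .
\]
Choosing $\mathbf{g}$ to be a dual witness with $\inprod{\mathbf{g}}{\vh} = \norm{\vh}\norm{\mathbf{g}}_*$ yields $\nabla^2\psi(\vx)[\vh,\vh] \ge \frac1L\norm{\vh}^2$. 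The boundary terms vanish because $\nu_\vx$ has finite exponential moments.

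\emph{For (3)}, I would apply the Brascamp--Lieb inequality (Fact~\ref{fact:brascamp_lieb}) to $\nu_\vx$ with the linear test function $f = \inprod{\vh}{\cdot}$. This gives $\nabla^2\psi(\vx)[\vh,\vh] \le \E\,(\nabla^2\vhi)^{-1}[\vh,\vh]$. It then remains to convert the norm-based strong convexity into a bound on the inverse Hessian, a pointwise duality statement. If $\ma \succeq 0$ satisfies $\ma[\vv,\vv] \ge \frac1L\norm{\vv}_*^2$ for all $\vv$, then
\[
\ma^{-1}[\vh,\vh] = \sup_{\vv}\frac{\inprod{\vh}{\vv}^2}{\ma[\vv,\vv]} \le \sup_{\vv}\frac{L\,\norm{\vh}^2\norm{\vv}_*^2}{\norm{\vv}_*^2} = L\norm{\vh}^2 .
\]
Together these give $\nabla^2\psi \preceq L\norm{\cdot}^2$, i.e., $\psi$ is $L$-smooth with respect to $\norm{\cdot}$.
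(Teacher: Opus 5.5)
Your proof is correct and follows the standard route behind the cited Lemmas 4--6 of \cite{GopiLLST25}. The paper itself only cites those lemmas, and lists Cram\'er--Rao and Brascamp--Lieb among its preliminaries as Fact~\ref{fact:cramer_rao} and Fact~\ref{fact:brascamp_lieb}. For (1) you use the sharp log-concave skewness bound $|\E X^3| \le 2(\E X^2)^{3/2}$ (as in \cite{BubeckE19}), which you may cite or prove by a sign-change comparison against the variance-matched centered exponential. For (2) and (3) you use Cram\'er--Rao and Brascamp--Lieb respectively, each followed by the norm-duality step. Your Cauchy--Schwarz packaging of (2) is slightly more robust than the matrix form of Cram\'er--Rao, since it never needs $\E\nabla^2\vhi$ to be invertible.
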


We also have the following useful property of the LLT under convolution. 

\begin{lemma}[LLT convolution]\label{lem:lltconv}
Let $\mu = \exp(-\vhi)$, $\nu = \exp(-\phi)$ be densities, and let $\pi \defeq \mu \ast \nu$ be their convolution. Then letting $\chi \defeq -\log(\pi)$, we have $\chi^\sharp = \vhi^\sharp + \phi^\sharp$.
\end{lemma}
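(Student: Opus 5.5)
The plan is to compute $\chi^\sharp$ directly from Definition~\ref{def:llt} and factor the integral via Fubini. The LLT of $\chi = -\log \pi$ is the logarithm of the Laplace transform (moment generating function) of the density $\pi$:
\[
\chi^\sharp(\vx) = \log\Par{\int \exp\Par{\inprod{\vx}{\vy}} \pi(\vy)\,\dd\vy}.
\]
Similarly, $\vhi^\sharp(\vx) = \log \int \exp(\inprod{\vx}{\vy})\mu(\vy)\,\dd\vy$ and $\phi^\sharp(\vx) = \log\int \exp(\inprod{\vx}{\vy})\nu(\vy)\,\dd\vy$. The claim therefore reduces to the classical fact that the moment generating function of a convolution is the product of the moment generating functions. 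The key steps are as follows.

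First, I write $\pi(\vy) = \int \mu(\vy - \vz)\nu(\vz)\,\dd\vz$ and substitute this into the integral defining $\exp(\chi^\sharp(\vx))$. Since the integrand $\exp(\inprod{\vx}{\vy})\mu(\vy-\vz)\nu(\vz)$ is nonnegative, Tonelli's theorem lets me exchange the order of integration with no integrability hypotheses; both sides may be $+\infty$, consistent with the LLT being extended-valued. Next I change variables $\vu = \vy - \vz$ in the inner integral. Using $\inprod{\vx}{\vy} = \inprod{\vx}{\vu} + \inprod{\vx}{\vz}$, the double integral factors as
\[
\Par{\int \exp(\inprod{\vx}{\vu})\mu(\vu)\,\dd\vu}\Par{\int \exp(\inprod{\vx}{\vz})\nu(\vz)\,\dd\vz}.
\]
Finally, I take logarithms to obtain $\chi^\sharp = \vhi^\sharp + \phi^\sharp$ pointwise, with the convention $\log(+\infty) = +\infty$.

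There is no real obstacle. The only point needing care is justifying the exchange of integrals, which Tonelli handles by nonnegativity. One should also note that $\chi = -\log\pi$ is well defined as an extended-real function, taking the value $+\infty$ where $\pi = 0$, and that this does not affect the integral.
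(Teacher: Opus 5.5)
Your proof is correct and follows the same route as the paper: substitute the convolution into the Laplace transform of $\pi$, exchange the order of integration, shift variables in the inner integral, and factor the result before taking logarithms. Your appeal to Tonelli (rather than the paper's Fubini), together with the note on extended values, is a slightly cleaner justification. It needs no integrability hypothesis and covers the case where the transforms are infinite.
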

\begin{proof}
This is a straightforward calculation: for all $\vw \in \R^d$,
\begin{align*}
\chi^\sharp(\vw) &= \log\Par{\int\exp\Par{\inprod{\vw}{\vy} - \chi(\vy)} \dd \vy} \\
&= \log\Par{\int\exp\Par{\inprod{\vw}{\vy}} \pi(\vy) \dd \vy} \\
&= \log\Par{\int \Par{\int \exp\Par{\inprod{\vw}{\vz}} \nu(\vz) \exp\Par{\inprod{\vw}{\vy - \vz}} \mu(\vy - \vz) \dd \vz} \dd \vy} \\
&= \log\Par{\int \Par{\int  \exp\Par{\inprod{\vw}{\vy - \vz}} \mu(\vy - \vz) \dd \vy} \exp\Par{\inprod{\vw}{\vz}} \nu(\vz) \dd \vz} \\
&= \log\Par{\int \Par{\int  \exp\Par{\inprod{\vw}{\vy}} \mu(\vy) \dd \vy} \exp\Par{\inprod{\vw}{\vz}} \nu(\vz) \dd \vz} \\
&= \log\Par{\int \Par{\int  \exp\Par{\inprod{\vw}{\vy} - \phi(\vy)} \dd \vy} \exp\Par{\inprod{\vw}{\vz} - \vhi(\vz)} \dd \vz} = \phi^\sharp(\vw) + \vhi^\sharp(\vw).
\end{align*}
The first two lines are definitions, the third expands $\pi = \mu \ast \nu$, the fourth applies Fubini's theorem, the fifth performs a change of variables, and the last again applies definitions.
\end{proof}

Lemma~\ref{lem:lltconv} will prove particularly useful to us when convolving multiple copies of a density with itself. For $\vhi: \R^d \to \R$ such that $\pi = \exp(-\vhi)$ is a density, and $\tau \in \N$, we define $\vhi^{\ast \tau}$ to be such that $\exp(-\vhi^{\ast \tau}) = \pi \ast \ldots \ast \pi$, $\tau$ times in total. Then, repeatedly applying Lemma~\ref{lem:lltconv} gives
\[\Par{\vhi^{\ast \tau}}^\sharp = \tau \psi.\]

Our framework in Section~\ref{sec:functional_stochastic_localization} only relies on Lemma~\ref{lem:lltconv} through the above fact, so we always assume that $\int \exp(-\vhi(\vx)) \dd \vx = 1$ without loss of generality, so that we need not carry around normalizing factor adjustments. This can always be achieved by shifting $\vhi$ by a constant, which does not change any of the densities used by our Markov chains (e.g., Algorithm~\ref{alg:alternate}), nor the starting assumptions of any of our main results (e.g., Theorem~\ref{thm:mixing} and Corollary~\ref{cor:slc_mixing}).

\paragraph{Localization processes.} We introduce localization processes, primarily following the expositions of \cite{ChenE22,ShiTZ25}.
\begin{definition}\label{def:loc_process}
    A \emph{localization process} is a measure-valued stochastic process $\Brace{\pi_t}_{t\ge0}$ on a state space $\Omega$, which satisfies the following properties:
    \begin{enumerate}
        \item[(1)] $\pi_t$ is a probability measure of $\Omega$ for all $t$, almost surely.
        \item[(2)] For all $A\subseteq \Omega$, $t \mapsto \pi_t(A)$ is a martingale.
        \item[(3)] For all $A\subseteq \Omega$, $\lim_{t\to\infty}\pi_t(A) \in \Brace{0,1}$, almost surely.
    \end{enumerate}
        A \emph{localization scheme} maps $\pi$, a measure over $\Omega$, to a localization process $\Brace{\pi_t}_{t\ge0}$ with initial condition $\pi_0=\pi$. Further, the \emph{localization dynamics} associated with the localization process and some fixed time $T>0$ is the Markov chain with transition kernel defined by \begin{equation}
        \mathsf{p}_T^\pi(A\mid \omega) = \E\Brack{\frac{\pi_T(\omega)\pi_T(A)}{\pi(\omega)}},\quad for\, all\,\omega\in \Omega,\,A\subseteq\Omega.\label{eq:loc_scheme_kernel}
        \end{equation}
\end{definition}

A discrete-time localization process indexed by time $\tau$ can be viewed as a continuous time localization process that is constant on $[\tau, \tau+1)$.

\begin{fact}
    For any $T>0$, $\pi$ is reversible and stationary for \eqref{eq:loc_scheme_kernel}.
\end{fact}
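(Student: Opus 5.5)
We show reversibility of $\pi$ under the kernel \eqref{eq:loc_scheme_kernel}, which is the detailed balance identity
\[
\pi(\omega)\,\mathsf{p}_T^\pi(\omega'\mid\omega)=\pi(\omega')\,\mathsf{p}_T^\pi(\omega\mid\omega').
\]
Stationarity will then follow by integrating over $\omega$. We work with densities, so that \eqref{eq:loc_scheme_kernel} reads $\mathsf{p}_T^\pi(\omega'\mid\omega)=\E[\pi_T(\omega)\pi_T(\omega')]/\pi(\omega)$. Equivalently, in integral form, for all measurable $A,B\subseteq\Omega$,
\[
\int_A \mathsf{p}_T^\pi(B\mid\omega)\,\pi(\dd\omega)=\E\Brack{\pi_T(A)\,\pi_T(B)}.
\]
This expression is manifestly symmetric in $(A,B)$, and that symmetry is exactly reversibility.

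\textbf{Step 1: the kernel is a Markov kernel.} Nonnegativity is clear. For normalization, use property (1) of Definition~\ref{def:loc_process}, namely $\pi_T(\Omega)=1$ almost surely, to get $\mathsf{p}_T^\pi(\Omega\mid\omega)=\E[\pi_T(\omega)]/\pi(\omega)$. The martingale property (2) gives $\E[\pi_T(A)]=\pi_0(A)=\pi(A)$ for every $A$. A Radon--Nikodym argument then gives $\E[\pi_T(\omega)]=\pi(\omega)$ for $\pi$-a.e.\ $\omega$: both sides are densities of the same measure $A\mapsto\E[\pi_T(A)]$, after swapping expectation and integral by Fubini/Tonelli, which is valid since everything is nonnegative. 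Hence $\mathsf{p}_T^\pi(\Omega\mid\omega)=1$.

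\textbf{Step 2: symmetry.} Integrate the kernel against $\pi$ over $A$; the factor $\pi(\omega)$ cancels. Tonelli then lets us exchange $\int_A$ with $\E$, giving $\E[\pi_T(A)\pi_T(B)]$, which is symmetric in $A$ and $B$. This is detailed balance.

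\textbf{Step 3: stationarity.} Take $A=\Omega$ in the identity above. Since $\pi_T(\Omega)=1$, we get $\int \mathsf{p}_T^\pi(B\mid\omega)\,\pi(\dd\omega)=\E[\pi_T(B)]=\pi(B)$, again by the martingale property.

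The only delicate point is measure-theoretic: justifying the density identity $\E[\pi_T(\omega)]=\pi(\omega)$ and the exchanges of expectation and integration. Nonnegativity and Tonelli's theorem handle these, and we need $\pi_T\ll\pi$ almost surely, so that $\pi_T(\omega)/\pi(\omega)$ is read as a Radon--Nikodym derivative. This absolute continuity itself follows from the martingale property: if $\pi(A)=0$, then $\E[\pi_T(A)]=0$, so $\pi_T(A)=0$ almost surely.
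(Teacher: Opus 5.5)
Your main argument is correct and is essentially the paper's proof. The paper also uses property (2) to get $\E[\pi_T]=\pi$, which gives normalization of the kernel. It then uses Fubini to rewrite $\int_B \mathsf{p}_T^\pi(A\mid\omega)\,\pi(\dd\omega)$ as the symmetric quantity $\E[\pi_T(A)\pi_T(B)]$. Your Step 3 just makes explicit the stationarity that the paper leaves implicit, by taking one of the sets to be $\Omega$.

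The one incorrect step is your final claim that almost-sure absolute continuity $\pi_T\ll\pi$ follows from the martingale property. Your argument only shows that for each fixed $\pi$-null set $A$, $\pi_T(A)=0$ outside an exceptional event that depends on $A$. Since there are uncountably many null sets, you cannot exchange the quantifiers, and the conclusion is false in general. For example, on $\Omega=[0,1]$ with $\pi$ uniform, take $\pi_t=\pi$ for $t<T$ and $\pi_t=\delta_U$ for $t\ge T$, with $U\sim\pi$. This is a localization process with $\E[\pi_T(A)]=\pi(A)$ for every $A$, yet $\pi_T\perp\pi$ almost surely.

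The right reading is that \eqref{eq:loc_scheme_kernel} presupposes that $\pi_T$ has a density, which is a standing hypothesis rather than something to derive. It is automatic for the process \eqref{eq:llt-loc-process}, where $\pi_\tau$ is an explicit reweighting of $\pi$. Granting that $\pi_T$ and $\pi$ are densities with respect to a common reference measure, your reasoning suffices once you apply it to the single set $N\defeq\{\pi=0\}$. Then $\E[\pi_T(N)]=\pi(N)=0$ gives $\pi_T(N)=0$ almost surely. This is exactly what ensures that restricting the $\omega$-integral to $\{\pi>0\}$ in Step 2 (where the kernel is defined) loses nothing. Alternatively, on a standard Borel space you can define the kernel as the disintegration of the symmetric measure $\Gamma(A\times B)\defeq\E[\pi_T(A)\pi_T(B)]$ with respect to its first marginal, which is $\pi$. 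That gives reversibility and stationarity with no absolute continuity needed, and it agrees with \eqref{eq:loc_scheme_kernel} whenever densities exist.
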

\begin{proof}
    By property (2) of Definition \ref{def:loc_process}, it holds $\E[\pi_T]=\pi$, which implies for $\pi$-almost every $\omega\in\Omega$, \[
    \E\Brack{\frac{\pi_T(\omega)\pi_T(\Omega)}{\pi(\omega)}} = \E\Brack{\frac{\pi_T(\omega)}{\pi(\omega)}}=1.
    \]
    Thus, $\mathsf{p}_T^\pi(\cdot\mid \omega)$ is a probability density. For every $A,B\subseteq \Omega$, by Fubini's theorem, we have \begin{align*}
        \int_B\mathsf{p}_T^\pi(A\mid \omega)\pi(\dd\omega) &= \int_B \E\Brack{\frac{\pi_T(\omega)\pi_T(A)}{\pi(\omega)}}\pi(\dd\omega) = \E\Brack{\int_B\frac{\pi_T(\omega)\pi_T(A)}{\pi(\omega)}\pi(\dd\omega)}\\
        &= \E\Brack{\pi_T(A)\pi_T(B)} = \int_A\mathsf{p}_T^\pi(B\mid \omega)\pi(\dd\omega),
    \end{align*}
so that the Markov chain is reversible and stationary.
\end{proof}

This immediately implies that every localization scheme induces an unbiased sampling algorithm.

\paragraph{Isoperimetric and covariance inequalities.} We require a few additional definitions, centered around isoperimetric inequalities in a metric defined locally by a convex function $\vhi$.

\begin{definition}[$\vhi$-PI]\label{def:vpi}
Let $\pi \propto \exp(-V)$ for $V: \R^d \to \R$, and let $\vhi: \R^d \to \R$ be convex. We say that $\pi$ satisfies a $\vhi$-Poincar\'e inequality with constant $\alpha$ (abbreviated as $\alpha$-$\vhi$-PI) if, for all smooth functions $f: \R^d \to \R$,
\[\Var_{\pi}[f] \le \frac 1 \alpha \E_\pi\Brack{\norm{\nabla f}_{(\nabla^2 \vhi)^{-1}}^2}.\]
\end{definition}
This is also called a mirror Poincar\'e inequality in the literature if $\vhi$ is a mirror map. If $\vhi = \half\norm{\cdot}_2^2$ (so that $\nabla^2 \vhi$ is the identity), this recovers the standard PI.

\begin{fact}[Brascamp-Lieb]\label{fact:brascamp_lieb}
    If $\pi$ is $\alpha$-strongly log-concave with respect to $\vhi$, i.e., $-\log \pi\succeq \alpha \vhi$, then $\pi$ satisfies $\alpha$-$\vhi$-PI.
\end{fact}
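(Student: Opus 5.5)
The plan is to reduce the statement to the classical Brascamp--Lieb inequality and then compare Hessians. Write $\pi \propto \exp(-V)$. The hypothesis $V \succeq \alpha\vhi$ means $V - \alpha\vhi$ is convex, so $\nabla^2 V(\vx) \succeq \alpha \nabla^2\vhi(\vx)$ pointwise. Assume first that $\nabla^2\vhi \succ 0$ everywhere, so that $(\nabla^2\vhi)^{-1}$ in Definition~\ref{def:vpi} makes sense. Then $\nabla^2 V \succ 0$ as well, and operator monotonicity of matrix inversion gives $(\nabla^2 V)^{-1} \preceq \frac1\alpha(\nabla^2\vhi)^{-1}$. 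Hence it suffices to prove the classical inequality
\[\Var_\pi[f] \le \E_\pi\Brack{\norm{\nabla f}^2_{(\nabla^2 V)^{-1}}}\]
for smooth $f$, after which the claim follows immediately.

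For the classical inequality I would use Hörmander's $L^2$ (Bochner) method with the generator $\mathcal{L} u \defeq \Delta u - \inprod{\nabla V}{\nabla u}$, which is symmetric on $L^2(\pi)$ with $\E_\pi[g\,\mathcal{L}u] = -\E_\pi[\inprod{\nabla g}{\nabla u}]$. The argument has three steps.
\begin{enumerate}
\item Solve the Poisson equation $\mathcal{L}u = -(f - \E_\pi f)$. Then
\[\Var_\pi[f] = -\E_\pi[f\,\mathcal{L}u] = \E_\pi\Brack{\inprod{\nabla f}{\nabla u}}.\]
\item Apply Cauchy--Schwarz in the metric $H \defeq \nabla^2 V$:
\[\E_\pi\Brack{\inprod{\nabla f}{\nabla u}} \le \Par{\E_\pi\Brack{\norm{\nabla f}_{H^{-1}}^2}}^{1/2}\Par{\E_\pi\Brack{\norm{\nabla u}_{H}^2}}^{1/2}.\]
\item Use the integrated Bochner identity
\[\E_\pi\Brack{(\mathcal{L}u)^2} = \E_\pi\Brack{\norm{\nabla^2 u}_F^2 + \nabla u^\top \nabla^2 V \nabla u},\]
which gives $\E_\pi[\norm{\nabla u}_H^2] \le \E_\pi[(\mathcal{L}u)^2] = \Var_\pi[f]$.
\end{enumerate}
Combining these yields $\Var_\pi[f] \le (\E_\pi[\norm{\nabla f}^2_{H^{-1}}])^{1/2}\Var_\pi[f]^{1/2}$, and dividing by $\Var_\pi[f]^{1/2}$ gives the classical inequality.

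The main obstacle is justifying the existence and regularity of the Poisson solution $u$, and the integrations by parts with no boundary terms. I would handle this by density. It suffices to prove the inequality for $f$ ranging over $\{\mathcal{L}u : u \in C_c^\infty\}$ plus constants. Essential self-adjointness of $\mathcal{L}$ on $C_c^\infty$ makes this set dense in $L^2(\pi)$ when $V$ is smooth and convex. Alternatively, one can first smooth $V$ and add $\frac{\eps}{2}\norm{\cdot}_2^2$ to obtain a uniformly convex potential with good elliptic theory, then let $\eps \to 0$ using monotone or dominated convergence on both sides.

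Finally, if $\nabla^2\vhi$ is only positive semidefinite, I would replace $\vhi$ by $\vhi_\eps \defeq \vhi + \frac{\eps}{2}\norm{\cdot}_2^2$ and $V$ by $V + \frac{\alpha\eps}{2}\norm{\cdot}_2^2$, apply the argument above, and pass to the limit $\eps \to 0$. Under this reading, $(\nabla^2\vhi)^{-1}$ is interpreted as the limit of $(\nabla^2\vhi_\eps)^{-1}$, which is $+\infty$ off the range of $\nabla^2\vhi$.
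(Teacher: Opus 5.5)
The paper states this as a classical fact without proof, and your reduction is exactly its content. Convexity of $V-\alpha\vhi$ gives $\nabla^2 V\succeq \alpha\nabla^2\vhi\succ 0$, hence $(\nabla^2 V)^{-1}\preceq \frac1\alpha(\nabla^2\vhi)^{-1}$, and everything reduces to the classical Brascamp--Lieb inequality with $H=\nabla^2 V$. Your Steps 1--3 are the standard H\"ormander/Bochner argument and are algebraically correct. The $\eps$-regularization for degenerate $\nabla^2\vhi$ also passes to the limit without trouble.

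The gap is in the paragraph meant to handle what you yourself call the main obstacle. Proving $\Var_\pi[g]\le\E_\pi[\norm{\nabla g}^2_{H^{-1}}]$ only for $g$ in the class $\{\mathcal{L}u+c : u\in C_c^\infty\}$ does not suffice, even though that class is dense in $L^2(\pi)$. If $g_n\to f$ in $L^2(\pi)$, the left-hand side converges. The right-hand side $\E_\pi[\norm{\nabla g_n}^2_{H^{-1}}]$, however, is not controlled by $L^2$ convergence; it is only lower semicontinuous, which is the wrong direction. So nothing transfers to a general smooth $f$. You would need density in the energy norm, and essential self-adjointness does not give that.

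The repair uses your own ingredients but puts the density on the test function instead of on $f$. This also removes any need to solve the Poisson equation. Take smooth $f\in L^2(\pi)$ and an arbitrary $u\in C_c^\infty$. Integration by parts then has no boundary terms, and your Steps 2--3 applied to this $u$ give
\[
\E_\pi\Brack{(f-\E_\pi f)\,\mathcal{L}u} = -\E_\pi\Brack{\inprod{\nabla f}{\nabla u}} \le \Par{\E_\pi\Brack{\norm{\nabla f}^2_{H^{-1}}}}^{1/2}\Par{\E_\pi\Brack{(\mathcal{L}u)^2}}^{1/2}.
\]
Essential self-adjointness of $\mathcal{L}$ on $C_c^\infty$ makes $\{\mathcal{L}u : u\in C_c^\infty\}$ dense in the mean-zero subspace of $L^2(\pi)$. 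This uses the fact that the closure of $\mathcal{L}$ annihilates only constants: $\overline{\mathcal{L}}g=0$ forces $\E_\pi\norm{\nabla g}_2^2=0$. Taking the supremum over such $u$ with $\E_\pi[(\mathcal{L}u)^2]\le 1$ yields $\Var_\pi[f]^{1/2}\le\Par{\E_\pi\Brack{\norm{\nabla f}^2_{H^{-1}}}}^{1/2}$.

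Your alternative route via uniformly convex approximations and elliptic theory then becomes unnecessary. As sketched, it would in any case still have to justify the Bochner integration by parts for a Poisson solution that is not compactly supported.
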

Similar to $\alpha$-$\vhi$-PI, we also respectively abbreviate $\alpha$-$\vhi$-strongly log-concave.

\begin{fact}[Cram\'er-Rao]\label{fact:cramer_rao}
If $\pi \propto \exp(-V)$ for $V: \R^d \to \R$,    $\Cov_{X\sim \pi}(X) \succeq\E_{X\sim \pi}[\nabla^2V(X)]^{-1}$.
\end{fact}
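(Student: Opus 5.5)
The plan is to prove Cram\'er--Rao by integration by parts combined with Cauchy--Schwarz, projecting onto the score function $\nabla V$. Write $\msig \defeq \Cov_{X\sim\pi}(X)$, $\vm \defeq \E_\pi[X]$ and $\mathbf{H} \defeq \E_\pi[\nabla^2 V(X)]$. I assume enough decay at infinity for boundary terms to vanish; this holds, e.g., when $\pi$ has finite second moments and $V$ is $C^2$ with integrable derivatives. I also assume $\mathbf{H} \succ 0$, otherwise the inverse in the statement is not meaningful.

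\textbf{Step 1: two integration-by-parts identities.} Since $\nabla \pi = -\pi \nabla V$, the divergence theorem gives, for each coordinate $j$,
\[
\E_\pi[\partial_j V] = -\int \partial_j \pi = 0 .
\]
Similarly, $\int \vx_i\, \partial_j \pi = -\delta_{ij}$, so
\[
\E_\pi\Brack{(X-\vm)\nabla V(X)^\top} = \id_d .
\]
In addition,
\[
\int \partial_i\partial_j \pi = 0, \qquad \partial_i\partial_j\pi = \pi\Par{\partial_i V\,\partial_j V - \partial_{ij} V},
\]
so $\E_\pi[\nabla V \nabla V^\top] = \mathbf{H}$. In other words, the Fisher information equals the expected Hessian.

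\textbf{Step 2: Cauchy--Schwarz.} Fix $\vv\in\R^d$ and set $\vw \defeq \mathbf{H}^{-1}\vv$. By Step 1,
\[
\vv^\top \vw = \E_\pi\Brack{\inprod{\vv}{X-\vm}\inprod{\nabla V(X)}{\vw}} \le \sqrt{\vv^\top \msig \vv}\,\sqrt{\vw^\top \mathbf{H} \vw}.
\]
With this choice of $\vw$, the left-hand side is $\vv^\top\mathbf{H}^{-1}\vv$, and the second square root is $\sqrt{\vv^\top\mathbf{H}^{-1}\vv}$. Dividing through yields $\vv^\top\msig\vv \ge \vv^\top \mathbf{H}^{-1}\vv$. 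Since $\vv$ is arbitrary, this is exactly the claim.

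\textbf{Main obstacle.} The algebra is routine. The only delicate point is justifying the vanishing boundary terms, in particular the second-derivative identity $\E_\pi[\nabla V\nabla V^\top]=\mathbf{H}$. I would handle this with a standard truncation argument: multiply by smooth cutoffs $\chi_R$, integrate by parts, and let $R\to\infty$ using dominated convergence under the integrability assumptions.

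As a fallback that avoids second derivatives, one can run Step 2 with the Fisher information $\mathbf{J} \defeq \E_\pi[\nabla V\nabla V^\top]$ in place of $\mathbf{H}$, giving $\msig\succeq \mathbf{J}^{-1}$. One then needs $\mathbf{J}=\mathbf{H}$, which is exactly the second-derivative identity from Step 1 whose boundary terms the cutoff argument justifies.
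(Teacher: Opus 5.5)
Your proof is correct. The paper gives no proof to compare against: it quotes this as the classical Cram\'er--Rao bound for the location family generated by $\pi$, whose Fisher information is $\E_\pi[\nabla V\nabla V^\top]$. Your argument is exactly the standard proof of that bound: the score identity $\E_\pi[(X-\vm)\nabla V(X)^\top]=\id_d$, the identification of the Fisher information with $\E_\pi[\nabla^2 V]$, and Cauchy--Schwarz with $\vw=\mathbf{H}^{-1}\vv$.

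Two small remarks.

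First, you do not need to assume $\mathbf{H}\succ 0$. If $\mathbf{H}\vw=0$, then $\E_\pi[\inprod{\nabla V}{\vw}^2]=0$, so $\inprod{\nabla V}{\vw}=0$ holds $\pi$-almost surely. Hence $\vw=\E_\pi[(X-\vm)\inprod{\nabla V(X)}{\vw}]=0$.

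Second, a smoothness assumption like your $C^2$ hypothesis is genuinely needed, not cosmetic. Take $V(x)=|x|$ on $\R$. Then $V''=0$ almost everywhere, so the literal statement fails even though $\Var_\pi[X]=2$. The reason is that the identity $\E_\pi[(V')^2]=\E_\pi[V'']$ misses the point mass $2\delta_0$ in the distributional second derivative. This is an interior singularity, not a boundary term at infinity, so your cutoff argument does not address it; your regularity assumption does.
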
 

\begin{lemma}\label{lem:poincare-conv}
    Suppose $\pi=\pi_1*\pi_2$, where $\pi_i$ satisfies an $\alpha_i$-$\varphi$-PI, $i=1, 2$.  Then $\pi$ satisfies an $\varphi$-PI with constant 
    \[\frac{1}{\alpha_1^{-1}+\alpha_2^{-1}}.\]
\end{lemma}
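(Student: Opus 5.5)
The plan is the standard ``sum of Poincar\'e constants'' argument for convolutions, built on the law of total variance \eqref{eq:total-var}. Realize $\pi$ as the law of $X = X_1 + X_2$, where $X_1 \sim \pi_1$ and $X_2 \sim \pi_2$ are independent. Fix a smooth test function $f$ and define $g(\vx_2) \defeq \E[f(X_1 + \vx_2)]$, which is $\E[f(X)\mid X_2 = \vx_2]$. Conditioning on $X_2$ gives
\[\Var_\pi[f] = \E\Brack{\Var[f(X_1+X_2)\mid X_2]} + \Var_{\pi_2}[g].\]
I will bound the two terms separately, using the PI for $\pi_1$ on the first and the PI for $\pi_2$ on the second. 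The constant $\alpha_1^{-1} + \alpha_2^{-1}$ then comes out by adding the two bounds.

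\textbf{First term.} For fixed $\vx_2$, apply the $\alpha_1$-$\vhi$-PI of $\pi_1$ to $h(\vx_1) \defeq f(\vx_1 + \vx_2)$. Since $\nabla h(\vx_1) = \nabla f(\vx_1+\vx_2)$, this gives $\Var[f(X_1+\vx_2)] \le \frac{1}{\alpha_1}\E\Brack{\norm{\nabla f(X_1+\vx_2)}^2_{M}}$, where $M$ is the metric tensor attached to the PI. Averaging over $X_2$ yields a bound of the form $\frac{1}{\alpha_1}\E_\pi\Brack{\norm{\nabla f}^2}$.

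\textbf{Second term.} Apply the $\alpha_2$-$\vhi$-PI of $\pi_2$ to $g$. Differentiating under the integral gives $\nabla g(\vx_2) = \E[\nabla f(X_1+\vx_2)]$. Jensen's inequality for the convex map $\vv \mapsto \norm{\vv}^2_{M}$ with a fixed matrix $M$ gives $\norm{\nabla g(\vx_2)}^2_M \le \E\Brack{\norm{\nabla f(X_1+\vx_2)}^2_M}$. Averaging over $X_2$ bounds the second term by $\frac{1}{\alpha_2}\E_\pi\Brack{\norm{\nabla f}^2}$. Adding the two bounds gives $\Var_\pi[f] \le (\alpha_1^{-1}+\alpha_2^{-1})\,\E_\pi\Brack{\norm{\nabla f}^2_{(\nabla^2\vhi)^{-1}}}$, which is the claim. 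Routine integrability and differentiation-under-the-integral issues I will handle by density of smooth compactly supported functions.

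\textbf{Main obstacle.} The difficulty is where the local metric $(\nabla^2\vhi)^{-1}$ is evaluated. In the first term the PI of $\pi_1$ evaluates it at $\vx_1$, but we need it at $\vx_1+\vx_2$. In the second term Jensen requires one fixed matrix, yet the PI of $\pi_2$ produces $(\nabla^2\vhi(\vx_2))^{-1}$ while the target uses $(\nabla^2\vhi(\vx_1+\vx_2))^{-1}$. Both steps go through verbatim when $\nabla^2\vhi$ is constant, which covers the Euclidean case and any fixed quadratic norm; the first thing I would do is record this case.

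For a general $\vhi$ I would proceed in two ways. First, I would try to read the statement in the form in which the paper applies it, namely with the metric fixed, or with one of the factors Gaussian-like, so that the evaluation points can be matched. Second, I would look for a comparison inequality such as $\nabla^2\vhi(\vx_1+\vx_2) \preceq \nabla^2\vhi(\vx_i)$ that makes the metrics comparable. If neither route works, I would restrict the lemma to translation-invariant metrics, since for a genuinely position-dependent metric the decomposition above does not close by itself.
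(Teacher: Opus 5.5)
Your decomposition is essentially the paper's argument. The paper uses subadditivity of variance for independent summands (Efron--Stein), $\Var[f(X+Y)]\le\E[\Var(f(X+Y)\mid X)]+\E[\Var(f(X+Y)\mid Y)]$; its display writes conditional expectations where conditional variances are meant. It then applies each factor's PI to a translate of $f$. Your total-variance-plus-Jensen version gives the same bound, and it is a complete proof when $\nabla^2\vhi$ is constant. The paper then simply writes the right-hand side with the metric evaluated at $X+Y$. That is exactly the step you flagged: the PI of $\pi_2$ applied to $\vy\mapsto f(\vx+\vy)$ produces $(\nabla^2\vhi(\vy))^{-1}$, not $(\nabla^2\vhi(\vx+\vy))^{-1}$. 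So the paper does not supply any idea you are missing.

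Your suspicion is correct: for position-dependent $\nabla^2\vhi$ the statement is false in general, already in the form the paper uses. In $d=1$, take (a smoothing of) $\vhi(y)=\frac12 y^2+M(y-1)_+^2$, normalized, and $\pi_1=\pi_2=\exp(-\vhi)$. Each satisfies a $1$-$\vhi$-PI by Fact~\ref{fact:brascamp_lieb}. Let $S\sim\pi_1*\pi_2$ and $f(s)=(s-1)_+$ (suitably smoothed). The claimed $\frac12$-$\vhi$-PI would give $\Var[f(S)]\le 2\E[f'(S)^2/\vhi''(S)]=O(1/M)$. But as $M\to\infty$, each $\pi_i$ tends to the standard Gaussian truncated to $(-\infty,1]$. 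Its self-convolution puts positive mass on $(1,2)$, so $\Var[f(S)]$ stays bounded away from zero. The mechanism is that convolution pushes mass into regions where $\nabla^2\vhi$ is far larger than anywhere the factors live.

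This is precisely the instance invoked in the proof of Lemma~\ref{lem:dual-poincare} (with $\tau=2$, $\vx=\mathbf{0}$), so reading the lemma ``as applied'' does not help. Your comparison route does not help either: demanding $\nabla^2\vhi(\vx_1+\vx_2)\preceq\nabla^2\vhi(\vx_1)$ for all pairs, as full support requires, forces $\nabla^2\vhi$ to be constant (substitute $\vx_2=\vz-\vx_1$). Your fallback of restricting the lemma to constant $\nabla^2\vhi$ is therefore the right resolution, and there your proof is complete. Theorem~\ref{thm:mixing} is unaffected, since Corollary~\ref{cor:halfway_var} uses only the first two inequalities of \eqref{eq:dpi_term2}, not this lemma.
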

\begin{proof}
    Let $X\sim \pi_1$, $Y\sim \pi_2$ be independent.  Then by subadditivity of the variance,
    \begin{align*}
        \operatorname{Var}_\pi[f]=\operatorname{Var}_{\pi} [f(X+Y)]
        &\le\mathbb{E}_{\pi_1}[f(X+Y)\mid X]+\mathbb{E}_{\pi_2}[f(X+Y)\mid Y]\\
        &\leq (\alpha_1^{-1}+\alpha_2^{-1})\mathbb{E}\Brack{\norm{\nabla f(X+Y)}^2_{(\nabla^2\vhi)^{-1}}},
    \end{align*}
    which concludes the proof.
\end{proof}

\paragraph{Joint distribution induced by the LLT.}

Using Definition \ref{def:llt} and Lemma \ref{lem:lltconv}, we obtain a joint density corresponding to a target distribution $\pi$.  Fix $\tau\in\N\cup\{0\}$, and define $\tilde{\pi}_{(\tau)}\in \mathcal{P}(\calX\times \calY)$ as
\begin{equation}\label{eq:joint-density}
    \tilde{\pi}_{(\tau)}(\vx, \vy):=\exp\Par{\inprod{\vx}{\vy}-\varphi^{*\tau}(\vy)-\tau\psi(\vx)}\pi(\vx).
\end{equation}
It is immediate that $\pi=\tilde{\pi}_{(\tau)}^{X}$.

The joint density $\tilde{\pi}_{(\tau)}$ will be our primary object of study.  In Section \ref{sec:functional_stochastic_localization}, we show that the process obtained from taking $\tilde{\pi}_{(\tau)}^{X\mid Y=\vy_\tau}$ with $\vy_\tau\sim \tilde{\pi}_{(\tau)}^{Y}$ is a discrete-time localization process, where $\tau$ indexes time.  In Section \ref{sec:gibbs_sampling}, we prove general statements about two-state Gibbs sampling and compare these to the localization-based results from \cite{ChenE22}.  In Section \ref{sec:llt_alternate_sampling}, we combine the two perspectives to prove rapid mixing for the proximal sampler induced by $\tilde{\pi}_{(\tau)}$.

\section{Functional Stochastic Localization}\label{sec:functional_stochastic_localization}

Write $\pi_{\tau}^\vy\defeq \tilde{\pi}_{(\tau)}^{X\mid Y=\vy}$.  We construct a discrete-time stochastic localization process via the following:
\begin{equation}\label{eq:llt-loc-process}
    \vx\sim \pi, \quad \va_1, \dots, \va_\tau\stackrel{\text{i.i.d.}}{\sim}\mathcal{T}_{\vx}\exp(-\varphi), \quad \vy_\tau:=\sum_{i=1}^\tau \va_i, \quad \pi_\tau:=\pi_\tau^{\vy_\tau}=\tilde{\pi}_{(\tau)}^{X\mid Y=\vy_\tau},
\end{equation}
with $\vy_0=\mathbf{0}$, so that $\vy_\tau$ is a noisy observation of $\vx$. Intuitively, as $\tau \to \infty$, we receive more signal about $\tilt_{\vx} \exp(-\vhi)$, so \eqref{eq:llt-loc-process} has a natural interpretation as denoising a sample $\vx \sim \pi$. Indeed, taking $\vhi = \half \norm{\cdot}_2^2$, the process \eqref{eq:llt-loc-process} exactly reduces to the noisy Gaussian channel studied by \cite{AlaouiM22}.

Note that $\va_i\mid \vx\sim \exp\Par{\inprod{\va_i}{\vx}-\varphi(\va_i)-\psi(\vx)}$, so by Lemma \ref{lem:lltconv}, 
\[\vy_\tau\mid \vx \sim \exp\Par{\inprod{\vy_\tau}{\vx}-\varphi^{*\tau}(\vy_\tau)-\tau\psi(\vx)}.\]
Then
\[\tpi_{(\tau)}^Y(\vy)= \int \exp\Par{\inprod{\vy}{\vx}-\varphi^{*\tau}(\vy)-\tau\psi(\vx)}\pi(\dd \vx).\]

As stated, \eqref{eq:llt-loc-process} produces $\vy_\tau$ via $\tau$ draws from a ``shared sample'' $\vx$, which appears to be a history-dependent process. We give a convenient reformulation of \eqref{eq:llt-loc-process} that shows it is Markov.

\begin{lemma}\label{lem:ck-markov}
    The process $\{\vy_\tau\}_{\tau\geq 0}$ is Markov.
\end{lemma}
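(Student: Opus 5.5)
We will show that the conditional law of $\vy_{\tau+1}$ given the whole history $(\vy_0,\dots,\vy_\tau)$ depends only on $\vy_\tau$. In fact we aim for the explicit kernel of Algorithm~\ref{alg:localization}: draw $\vz \sim \pi_\tau^{\vy_\tau} = \tilt_{\vy_\tau}\exp(-\tau\psi)\pi$, then $\vw \sim \tilt_{\vz}\exp(-\vhi)$, and set $\vy_{\tau+1} = \vy_\tau + \vw$. The approach is Bayesian: $\vx$ is a latent variable, and the increments $\va_i = \vy_i - \vy_{i-1}$ are conditionally i.i.d.\ given $\vx$.

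\textbf{Step 1 (posterior depends only on $\vy_\tau$).} Given $\vx$, the increments $\va_1,\dots,\va_\tau$ have joint density
\[\prod_{i=1}^\tau \exp\Par{\inprod{\va_i}{\vx} - \vhi(\va_i) - \psi(\vx)} = \exp\Par{\inprod{\vy_\tau}{\vx} - \tau\psi(\vx)} \prod_{i=1}^\tau \exp(-\vhi(\va_i)).\]
The map $(\va_1,\dots,\va_\tau)\mapsto(\vy_1,\dots,\vy_\tau)$ is linear with unit Jacobian, so the density of $(\vy_1,\dots,\vy_\tau)$ given $\vx$ has the same form. By Bayes' rule, the posterior of $\vx$ given $(\vy_1,\dots,\vy_\tau)$ is
\[\propto \exp\Par{\inprod{\vy_\tau}{\vx} - \tau\psi(\vx)}\pi(\vx).\]
The factor $\prod_i \exp(-\vhi(\va_i))$ does not involve $\vx$ and cancels. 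This is a sufficient-statistic argument: the posterior equals $\tilt_{\vy_\tau}\exp(-\tau\psi)\pi = \pi_\tau^{\vy_\tau}$, which is consistent with the definition of $\tpi_{(\tau)}$ in \eqref{eq:joint-density}.

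\textbf{Step 2 (law of the next step).} Given $\vx$, the increment $\va_{\tau+1}$ is independent of $\va_1,\dots,\va_\tau$ and has law $\tilt_{\vx}\exp(-\vhi)$. Integrating $\vx$ against the posterior from Step 1, the conditional law of $\vy_{\tau+1} = \vy_\tau + \va_{\tau+1}$ given the history is
\[\int \tilt_{\vx}\exp(-\vhi)(\cdot - \vy_\tau)\,\pi_\tau^{\vy_\tau}(\dd\vx).\]
This depends on the history only through $\vy_\tau$, which proves the Markov property. It also recovers the two-stage sampling of Algorithm~\ref{alg:localization}. The same argument applied to the $k$ future increments $\va_{\tau+1},\dots,\va_{\tau+k}$ gives the full Markov property for all future times, not just one step ahead.

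The only delicate point is the rigor of Bayes' rule with densities, namely that the relevant densities are well defined. This follows from finiteness of $\psi$ (convex $\vhi$ with $\int \exp(-\vhi) = 1$ and finite LLT) and from Fubini's theorem. We therefore expect no real obstacle, and the main care goes into keeping the conditioning bookkeeping precise.
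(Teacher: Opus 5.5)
Your proof is correct, but it takes a different route from the paper's.

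\textbf{The paper's route.} The paper never conditions on the full history. It writes down the kernel of Algorithm~\ref{alg:localization}: draw $\vz\sim\tilt_{\vy_\tau}\exp(-\tau\psi)\pi$, then $\vw\sim\tilt_{\vz}\exp(-\vhi)$, and set $\vy_{\tau+1}=\vy_\tau+\vw$. It declares the resulting chain Markov by construction. It then checks, by an explicit density computation, that if $\vy_\tau\sim\tpi_{(\tau)}^Y$ then $(\vz,\vy_{\tau+1})$ has joint density $\tpi_{(\tau+1)}$, so in particular $\vy_{\tau+1}\sim\tpi_{(\tau+1)}^Y$. This check uses the convolution identity $\exp(-\vhi^{*\tau})*\exp(-\vhi)=\exp(-\vhi^{*(\tau+1)})$ and the formula for $\tpi_{(\tau)}^Y$, which rests on Lemma~\ref{lem:lltconv}.

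\textbf{Your route.} Your sufficient-statistic argument works with the increments directly. Neither $\vhi^{*\tau}$ nor Lemma~\ref{lem:lltconv} is needed, because the factor $\prod_i\exp(-\vhi(\va_i))$ cancels in Bayes' rule. It also proves the Markov property in the sense the statement asks for: the conditional law of $\vy_{\tau+1}$ given $(\vy_0,\dots,\vy_\tau)$ depends only on $\vy_\tau$.

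\textbf{What each approach buys.} The paper's computation literally verifies only that the chain reproduces the marginals $\tpi_{(\tau)}^Y$. That alone does not show that the history-dependent process \eqref{eq:llt-loc-process} has the same law as the chain of Algorithm~\ref{alg:localization}. Your Step~1 supplies exactly this missing observation. It is also the fact invoked later in Proposition~\ref{prop:localization}, namely that $\pi_\tau$ is the posterior of $\vx$ given the entire filtration. In exchange, the paper's route explicitly identifies the joint law $\tpi_{(\tau+1)}$ and the formula for $\tpi_{(\tau)}^Y$. These feed directly into the renormalized potentials of Section~\ref{ssec:renorm}.
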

\begin{proof}
    Given $\vy_{\tau}\sim \tpi_{(\tau)}^Y$, we claim that we can generate $\vy_{\tau+1}$ with marginal density $\tpi_{(\tau+1)}^Y$ as follows: draw $\vz\sim \mathcal{T}_{\vy_\tau}\exp(-\tau\psi)\pi$ and $\vw\sim \mathcal{T}_{\vz}\exp(-\varphi)$, and set $\vy_{\tau+1}=\vy_\tau+\vw$ (Algorithm~\ref{alg:localization}). 
    
    This process is clearly Markov.  It remains to show $\vy_{\tau+1}\sim \tpi_{(\tau+1)}^Y$.  Indeed, consider the joint density over $(\vy_{\tau}, \vz, \vw)$: this is proportional to
    \begin{align*}
        &\Par{\int \exp\Par{\inprod{\vy_\tau}{\vx} -\varphi^{*\tau}(\vy_\tau)-\tau\psi(\vx) }\pi(\dd \vx)}\\
        & \cdot \frac{\exp\Par{ \inprod{\vy_{\tau}}{\vz}-\tau\psi(\vz)  }\pi(\vz)}{\int \exp\Par{ \inprod{\vy_{\tau}}{\vz}-\tau\psi(\vz) }\pi(\dd\vz)}\cdot \exp\Par{\inprod{\vz}{\vw}-\varphi(\vw)-\psi(\vz)}\\
        &\qquad=\exp\Par{\inprod{\vy_\tau+\vw}{\vz}-\varphi^{*\tau}(\vy_{\tau})-\varphi(\vw)-(\tau+1)\psi(\vz)}\pi(\vz)\\
        &\qquad=\exp\Par{\inprod{\vy_{\tau+1}}{\vz}-\varphi^{*\tau}(\vy_{\tau})-\varphi(\vy_{\tau+1}-\vy_{\tau})-(\tau+1)\psi(\vz)}\pi(\vz), 
    \end{align*}
    where the last expression is proportional to the joint density over $(\vy_{\tau}, \vz, \vy_{\tau+1})$ via a change of variables.  Integrating over $\vy_\tau$, we notice that this is in the form of a convolution, so $(\vz, \vy_{\tau+1})$ has joint density equal to
    \[\exp\Par{\inprod{\vy_{\tau+1}}{\vz}-\varphi^{*(\tau+1)}(\vy_{\tau+1})-(\tau+1)\psi(\vz)}\pi(\vz).\]
    Finally, marginalizing over $\vz$, we immediately have that $\vy_{\tau+1}\sim \tpi_{(\tau+1)}^Y$.
\end{proof}

\subsection{Renormalization} \label{ssec:renorm}
It is instructive to demonstrate the evolution of $\vy_\tau$ as a Markov process by explicitly giving the Markov operator. In the remainder of this section, we will use $\lambda\in \N\cup \{0\}$ to denote another time index. Define for all $0 \le \tau \le \lambda$:
\begin{equation}
    \begin{aligned}
        V_\tau(\va) &:= -\log \int \exp\Par{\inprod{\va}{\vz}-\tau\psi(\vz)}\pi(\dd\vz),\\
        \sfP_{\tau, \lambda}f(\va)&:=\exp(V_{\tau}(\va))\int f(\va+\vw)\exp(-V_{\lambda}(\va+\vw)-\varphi^{*(\lambda-\tau)}(\vw))\,\dd\vw.
    \end{aligned}
\end{equation}

We can think of $V_\tau$ as ``renormalized potentials'' and $\sfP_{\tau, \lambda}$ as the ``Polchinski operator'' in the framework of \cite{BauerschmidtBD24} but reversed with respect to time; see also \cite{ShiTZ25}. 

Moreover, we can identify $V_\tau$ with $-(\tau\psi-\log \pi)^{\sharp}$, that is, the negative of the log-Laplace transform of $\tau\psi-\log \pi$.  Note that $V_0(\va)=-\log \int \exp(\inprod{\va}{\vz})\pi(\dd\vz)$, so in particular $V_0(\mathbf{0})=0$.

\begin{proposition}\label{prop:renormalization-main}
$\sfP_{\tau, \lambda}$ defines a discrete-time Markov semigroup.  Moreover, $\sfP_{0, \tau}f(\mathbf{0})=\mathbb{E}_{\tpi_{(\tau)}^Y}[f]$ and
\[\tpi_{(\tau)}^Y(\vy)=\exp\Par{-V_\tau(\vy)-\varphi^{*\tau}(\vy)}.\]
\end{proposition}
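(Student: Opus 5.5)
The claimed formula for $\tpi_{(\tau)}^Y$ is immediate, so I will prove it first and use it for the other two claims. By the expression for $\tpi_{(\tau)}^Y$ derived just before Lemma~\ref{lem:ck-markov}, the factor $\exp(-\varphi^{*\tau}(\vy))$ does not depend on $\vx$ and can be pulled out of the integral. What remains is $\int \exp(\inprod{\vy}{\vx}-\tau\psi(\vx))\pi(\dd\vx)=\exp(-V_\tau(\vy))$, which gives $\tpi_{(\tau)}^Y(\vy)=\exp(-V_\tau(\vy)-\varphi^{*\tau}(\vy))$.

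For $\sfP_{0,\tau}f(\mathbf{0})$, I substitute $\va=\mathbf{0}$ and use $V_0(\mathbf{0})=0$. This gives
\[\sfP_{0,\tau}f(\mathbf{0})=\int f(\vw)\exp\Par{-V_\tau(\vw)-\varphi^{*\tau}(\vw)}\dd\vw,\]
and by the formula just proved this equals $\E_{\tpi_{(\tau)}^Y}[f]$.

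For the Markov semigroup claim there are three things to check: positivity, $\sfP_{\tau,\lambda}1=1$, and $\sfP_{\tau,\lambda}\sfP_{\lambda,\mu}=\sfP_{\tau,\mu}$ for $\tau\le\lambda\le\mu$.

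\emph{Positivity} is clear, since the kernel $\exp(V_\tau(\va)-V_\lambda(\va+\vw)-\varphi^{*(\lambda-\tau)}(\vw))$ is nonnegative.

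\emph{Mass conservation.} Expanding $\exp(-V_\lambda(\va+\vw))=\int \exp(\inprod{\va+\vw}{\vz}-\lambda\psi(\vz))\pi(\dd\vz)$ and applying Fubini, the $\vw$-integral becomes
\[\int \exp\Par{\inprod{\vw}{\vz}-\varphi^{*(\lambda-\tau)}(\vw)}\dd\vw=\exp\Par{(\varphi^{*(\lambda-\tau)})^\sharp(\vz)}=\exp\Par{(\lambda-\tau)\psi(\vz)},\]
using the identity $(\vhi^{*k})^\sharp=k\psi$ from Lemma~\ref{lem:lltconv}. What is left is $\int\exp(\inprod{\va}{\vz}-\tau\psi(\vz))\pi(\dd\vz)=\exp(-V_\tau(\va))$. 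This cancels the prefactor $\exp(V_\tau(\va))$, so $\sfP_{\tau,\lambda}1=1$. The same computation shows that $\sfP_{\tau,\lambda}$ is exactly the transition operator of the chain $\vy_\tau\mapsto\vy_\lambda$ from Lemma~\ref{lem:ck-markov}.

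\emph{Chapman--Kolmogorov.} I compose the two operators:
\[\sfP_{\tau,\lambda}\sfP_{\lambda,\mu}f(\va)=e^{V_\tau(\va)}\int\!\!\int f(\va+\vw+\vu)\,e^{-V_\lambda(\va+\vw)-\varphi^{*(\lambda-\tau)}(\vw)}\,e^{V_\lambda(\va+\vw)}\,e^{-V_\mu(\va+\vw+\vu)-\varphi^{*(\mu-\lambda)}(\vu)}\,\dd\vu\,\dd\vw .\]
The $V_\lambda$ factors cancel. Substituting $\vs=\vw+\vu$, the $\vw$-integral is the convolution of $e^{-\varphi^{*(\lambda-\tau)}}$ with $e^{-\varphi^{*(\mu-\lambda)}}$ evaluated at $\vs$. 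Since convolution is associative and $\vhi^{*k}$ is defined by $k$-fold convolution, this equals $e^{-\varphi^{*(\mu-\tau)}(\vs)}$, which yields $\sfP_{\tau,\mu}f(\va)$. The boundary case $\lambda=\tau$ must be read with $\varphi^{*0}$ denoting a Dirac mass, so that $\sfP_{\tau,\tau}$ is the identity.

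The only step requiring care is the mass-conservation computation, specifically justifying the Fubini swap and invoking $(\vhi^{*k})^\sharp=k\psi$. Everything else is algebraic cancellation.
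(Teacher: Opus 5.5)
Your proof is correct and follows essentially the same route as the paper. The Chapman--Kolmogorov computation (cancel the $V_\lambda$ factors, then recognize $e^{-\vhi^{*(\lambda-\tau)}} \ast e^{-\vhi^{*(\mu-\lambda)}} = e^{-\vhi^{*(\mu-\tau)}}$) and the evaluation at $\va=\mathbf{0}$ via $V_0(\mathbf{0})=0$ match the paper; the only differences are that you derive the formula for $\tpi_{(\tau)}^Y$ first rather than reading it off at the end, and that you also verify positivity and $\sfP_{\tau,\lambda}1=1$ via $(\vhi^{*k})^\sharp=k\psi$, which the paper leaves implicit (and whose interchange of integrals is justified by Tonelli alone, since the integrand is nonnegative).
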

\begin{proof}
Using the fact that $\exp\Par{-\varphi^{*0}}=\delta_{\mathbf{0}}$ with respect to test functions, 
\[\sfP_{\tau, \tau}f(\va)=\exp(V_\tau(\va))\int f(\va+\vw)\exp\Par{-V_\tau(\va+\vw)-\varphi^{*0}(\vw)}\,\dd\vw=f(\va).\]
Let $\tau\leq \lambda\leq \sigma$.  First, define
\[
    g(\va):=\sfP_{\lambda, \sigma}f(\va)
    =\exp\Par{V_\lambda(\va)}\int f(\va+\vw)\exp\Par{-V_\sigma(\va+\vw)-\varphi^{*(\sigma-\lambda)}(\vw)}\,\dd\vw.
\]
Then,
\begin{align*}
    \sfP_{\tau, \lambda}g(\va)
    &=\exp\Par{V_\tau(\va)}\int g(\va+\vy)\exp\Par{-V_\lambda(\va+\vy)-\varphi^{*(\lambda-\tau)}(\vy)}\,\dd\vy\\
    &=\exp\Par{V_\tau(\va)}\\
     &\quad  \cdot \int \left[ \exp\Par{V_\lambda(\va+\vy)}\int f(\va+\vy+\vw)\exp\Par{-V_\sigma(\va+\vy+\vw)-\varphi^{*(\sigma-\lambda)}(\vw)}\,\dd\vw  \right]\\
     &\quad \cdot \exp\Par{-V_\lambda(\va+\vy)-\varphi^{*(\lambda-\tau)}(\vy)}\,\dd\vy\\
     &=\exp\Par{V_\tau(\va)}\\
     &\quad \cdot \int\int f(\va+\vy+\vw) \exp\Par{-V_\sigma(\va+\vy+\vw)-\varphi^{*(\sigma-\lambda)}(\vw)-\varphi^{*(\lambda-\tau)}(\vy)}\,\dd\vw\,\dd\vy\\
     &=\exp\Par{V_\tau(\va)}\\
     &\quad \cdot \int\int f(\va+\vz) \exp\Par{-V_\sigma(\va+\vz)-\varphi^{*(\sigma-\lambda)}(\vw)-\varphi^{*(\lambda-\tau)}(\vz-\vw)}\,\dd\vw\,\dd\vz\\
     &=\exp\Par{V_\tau(\va)}\int f(\va+\vz)\exp\Par{-V_\sigma(\va+\vz)-\varphi^{*(\sigma-\tau)}(\vz)}\,\dd\vz\\
     &=\sfP_{\tau, \sigma}f(\va),
\end{align*}
which proves the first statement. For the second statement,
\begin{equation}\label{eq:tpiY-test-fns}
    \begin{aligned}
    \sfP_{0, \tau}f(\mathbf{0})
    &=\exp(V_0(\mathbf{0}))\int f(\vw)\exp\Par{-V_\tau(\vw)-\varphi^{*\tau}(\vw)}\,\dd\vw\\
    &=\int\int  f(\vw)\exp\Par{\inprod{\vw}{\vz}-\varphi^{*\tau}(\vw)-\tau \psi(\vz)}\pi(\dd\vz)\,\dd\vw\\
    &=\int f(\vw)\tpi_{(\tau)}^Y(\dd\vw),
\end{aligned}
\end{equation}
as claimed.  The explicit formula for $\tpi_{(\tau)}^Y$ can be seen from the first line of \eqref{eq:tpiY-test-fns} as the definition of integrating functions $f$ with respect to $\tpi_{(\tau)}^Y$.
\end{proof}

The next lemma describes the density of $\vw$ conditioned on $\vy_\tau$ when $\vw$ is generated as in Lemma \ref{lem:ck-markov}.

\begin{lemma}\label{lem:muk}
    Let $\vw$ be generated as follows: for fixed $\vy$, $\vz\sim \mathcal{T}_{\vy}\exp(-\tau\psi)\pi$, and $\vw\sim \mathcal{T}_{\vz}\exp(-\varphi)$.  Then the  density $\nu^\vy_\tau$ of $\vw$ is
    \[\nu^\vy_\tau(\vw)=\exp(-\varphi(\vw)-V_{\tau+1}(\vy+\vw)+V_\tau(\vy)).\]
\end{lemma}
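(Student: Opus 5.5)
The plan is to compute the density of $\vw$ directly by marginalizing out $\vz$. We write out the joint density of $(\vz,\vw)$ and recognize the normalizing constants as instances of $\exp(-V_\tau)$ and $\exp(-V_{\tau+1})$. By definition of the tilt, the density of $\vz$ given $\vy$ is
\[
\frac{\exp\Par{\inprod{\vy}{\vz}-\tau\psi(\vz)}\pi(\vz)}{\int \exp\Par{\inprod{\vy}{\vz'}-\tau\psi(\vz')}\pi(\dd\vz')}=\exp\Par{\inprod{\vy}{\vz}-\tau\psi(\vz)+V_\tau(\vy)}\pi(\vz),
\]
using the definition of $V_\tau$ in Section~\ref{ssec:renorm}. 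Since we normalized $\int\exp(-\vhi)=1$, the conditional density of $\vw$ given $\vz$ is $\exp\Par{\inprod{\vz}{\vw}-\vhi(\vw)-\psi(\vz)}$. This is the formula displayed after the definition of $\tilt_\va\nu$ in Section~\ref{sec:preliminaries}.

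Multiplying these two densities, the joint density of $(\vz,\vw)$ is
\[
\exp\Par{\inprod{\vy+\vw}{\vz}-(\tau+1)\psi(\vz)-\vhi(\vw)+V_\tau(\vy)}\pi(\vz).
\]
Integrating over $\vz$ and pulling out the factors that do not depend on $\vz$ leaves $\exp(-\vhi(\vw)+V_\tau(\vy))\int \exp\Par{\inprod{\vy+\vw}{\vz}-(\tau+1)\psi(\vz)}\pi(\dd\vz)$. The remaining integral is exactly $\exp(-V_{\tau+1}(\vy+\vw))$ by definition. This gives the claimed formula for $\nu^\vy_\tau(\vw)$.

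There is no real obstacle; the only point needing care is bookkeeping. First, the $-\psi(\vz)$ from the normalizer of $\tilt_\vz\exp(-\vhi)$ must combine with $-\tau\psi(\vz)$ to produce the $(\tau+1)\psi$ appearing in $V_{\tau+1}$. Second, we should check that the exchange of integrals is justified by Fubini/Tonelli, which holds since all integrands are nonnegative. As a sanity check, integrating $\nu^\vy_\tau$ over $\vw$ should give $1$. Indeed, doing so reproduces $\sfP_{\tau,\tau+1}1(\vy)=1$, consistent with Proposition~\ref{prop:renormalization-main}, since $\vhi^{*1}=\vhi$.
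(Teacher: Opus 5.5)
Your proposal is correct and follows essentially the same route as the paper: write the tilted density of $\vz$ as $\exp\Par{\inprod{\vy}{\vz}-\tau\psi(\vz)+V_\tau(\vy)}\pi(\vz)$, multiply by $\exp\Par{\inprod{\vz}{\vw}-\vhi(\vw)-\psi(\vz)}$, and integrate out $\vz$ to recognize $\exp(-V_{\tau+1}(\vy+\vw))$. Your added remarks on Tonelli and the normalization check $\sfP_{\tau,\tau+1}1=1$ are valid extras that the paper leaves implicit.
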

\begin{proof}
    The density of $\mathcal{T}_{\vy}\exp(-\tau\psi)\pi$ is $\exp\Par{\inprod{\vy}{\vz}-\tau\psi(\vz)+V_{\tau}(\vy)}\pi(\vz)$, and the density of $\mathcal{T}_{\vz}\exp(-\varphi)$ is $\exp\Par{\inprod{\vz}{\vw}-\varphi(\vw)-\psi(\vz)}$.  Therefore, we have
    \begin{align*}
        \nu^\vy_\tau(\vw)
        &=\int \exp\Par{\inprod{\vy}{\vz}-\tau\psi(\vz)+V_{\tau}(\vy)}\pi(\vz)\exp\Par{\inprod{\vz}{\vw}-\varphi(\vw)-\psi(\vz)}\,\dd\vz\\
        &=\int \exp\Par{\inprod{\vy+\vw}{\vz}-(\tau+1)\psi(\vz)-\varphi(\vw)+V_\tau(\vy)}\pi(\dd\vz)\\
        &=\exp(-\varphi(\vw)-V_{\tau+1}(\vy+\vw)+V_\tau(\vy)),
    \end{align*}
    as desired.
\end{proof}

The noise $\vw$ added to $\vy_\tau$ at time $\tau$ thus depends on both $\vy$ and $\tau$ via the renormalized potentials.

\begin{lemma}\label{lem:renorm-potential-derivatives}
    The gradient and Hessian of $V_\tau$ satisfy
    \begin{align*}
        \nabla V_\tau(\va)&=-\nabla (\tau\psi(\va)-\log\pi(\va))^{\sharp}=-\mu(\pi_\tau^\va),\\
        \nabla^2 V_{\tau}(\va)&=-\nabla^2 (\tau\psi(\va)-\log\pi(\va))^{\sharp}=-\Cov(\pi_\tau^\va).
    \end{align*}
\end{lemma}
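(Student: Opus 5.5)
The plan is to treat $V_\tau$ as the negative of a cumulant-generating function and differentiate under the integral sign, exactly as in Fact~\ref{fact:llt-derivatives}. By definition,
\[
-V_\tau(\va) = \log \int \exp\Par{\inprod{\va}{\vz}} \exp\Par{-\tau\psi(\vz)}\pi(\vz)\,\dd\vz = \Par{\tau\psi - \log \pi}^\sharp(\va),
\]
which is the log-Laplace transform (Definition~\ref{def:llt}) of the potential $\tau\psi - \log\pi$. This identity gives the first equality in each line of the lemma directly. Reading the expressions $\nabla(\tau\psi(\va)-\log\pi(\va))^\sharp$ as the derivatives of the function $(\tau\psi-\log\pi)^\sharp$ evaluated at $\va$, the content of the lemma reduces to identifying these derivatives with the mean and covariance of the tilted measure.

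\textbf{Step 1.} Identify the tilted density. The tilt of $\exp(-(\tau\psi - \log\pi))$ at $\va$ is
\[
\tilt_{\va}\Par{\exp(-\tau\psi)\pi}(\vz) \propto \exp\Par{\inprod{\va}{\vz}-\tau\psi(\vz)}\pi(\vz).
\]
By \eqref{eq:joint-density} this is exactly the conditional $\tpi_{(\tau)}^{X\mid Y=\va} = \pi_\tau^\va$, and its normalizing constant is $\exp(-V_\tau(\va))$.

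\textbf{Step 2.} Apply the cumulant-generating-function computation. Fact~\ref{fact:llt-derivatives} is stated for $\vhi^\sharp$ with $\vhi$ convex, but its proof uses only that the integrand is a log-linear tilt. Differentiating $\log\int e^{\inprod{\va}{\vz}}\,\dd m(\vz)$ with $m = e^{-\tau\psi}\pi$ gives:
\begin{itemize}
\item the gradient equals $\E_{\pi_\tau^\va}[\vz] = \mu(\pi_\tau^\va)$;
\item the Hessian equals $\E_{\pi_\tau^\va}[\vz\vz^\top] - \mu\mu^\top = \Cov(\pi_\tau^\va)$.
\end{itemize}
Negating both yields the claimed formulas for $\nabla V_\tau$ and $\nabla^2 V_\tau$.

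\textbf{Main obstacle.} The only point needing care is justifying differentiation under the integral, i.e., that the moment generating function of $m$ is finite in a neighborhood of $\va$. For $\tau \ge 1$ I would argue as follows. Since $\psi$ is the LLT of a density $e^{-\vhi}$, Jensen's inequality gives $\psi(\vz) \ge \inprod{\vz}{\E_{e^{-\vhi}}[\vy]}$, so $\psi$ grows at least linearly. More to the point, $\psi$ is convex and superlinear whenever $e^{-\vhi}$ has full-dimensional support, which makes $e^{\inprod{\va}{\cdot}-\tau\psi}$ integrable against $\pi$ uniformly for $\va$ in compact sets. Dominated convergence then permits differentiating twice.

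For $\tau = 0$, the formula holds wherever $V_0$ is finite, which is the standing implicit assumption whenever $V_0$ is used. I would state this as a mild regularity condition, consistent with how the paper already treats $V_\tau$ as well defined.
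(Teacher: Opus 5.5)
Your proposal is correct and follows the paper's argument. The paper says only that these are direct calculations like Fact~\ref{fact:llt-derivatives}: identify $-V_\tau = (\tau\psi-\log\pi)^\sharp$ and read off the mean and covariance of the tilt $\pi_\tau^{\va}$. You additionally justify differentiating under the integral (supercoercivity of $\psi$ for $\tau\ge 1$, with a finiteness caveat for $V_0$), a detail the paper leaves implicit.
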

\begin{proof}
    These are direct calculations similar to Fact \ref{fact:llt-derivatives}.
\end{proof}

\subsection{A localization process}
Using our constructions from Section~\ref{ssec:renorm}, we demonstrate that $\pi_\tau$ is a localization process.

\begin{proposition}\label{prop:localization}
    The measure-valued random process $\{\pi_\tau\}_{\tau\geq 0}$ defined in \eqref{eq:llt-loc-process} is a localization process (Definition \ref{def:loc_process}).
\end{proposition}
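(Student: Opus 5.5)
We verify the three properties of Definition~\ref{def:loc_process} for $\pi_\tau = \pi_\tau^{\vy_\tau}$, where $\vy_\tau$ evolves by the Markov chain of Lemma~\ref{lem:ck-markov}.

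\textbf{Property (1).} Each $\pi_\tau^{\vy}$ is the tilt $\mathcal{T}_{\vy}\exp(-\tau\psi)\pi$, with normalizing constant $\exp(-V_\tau(\vy))$. This constant is finite and positive for $\tpi_{(\tau)}^Y$-almost every $\vy$, since $\tpi_{(\tau)}^Y$ is a probability density with the formula in Proposition~\ref{prop:renormalization-main}. Hence $\pi_\tau$ is almost surely a probability measure.

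\textbf{Property (2), the martingale property.} Fix $A$. We show $\E[\pi_{\tau+1}(A)\mid \vy_\tau=\vy]=\pi_\tau^{\vy}(A)$, using the transition of Lemma~\ref{lem:ck-markov}: $\vy_{\tau+1}=\vy+\vw$ with $\vw\sim\nu^{\vy}_\tau$, whose density is given by Lemma~\ref{lem:muk}. The computation is
\begin{align*}
\E[\pi_{\tau+1}(A)\mid \vy_\tau=\vy] &= \int \nu_\tau^{\vy}(\vw)\int_A \exp\Par{\inprod{\vy+\vw}{\vx}-(\tau+1)\psi(\vx)+V_{\tau+1}(\vy+\vw)}\pi(\dd\vx)\,\dd\vw\\
&= e^{V_\tau(\vy)}\int_A e^{\inprod{\vy}{\vx}-(\tau+1)\psi(\vx)}\Par{\int e^{\inprod{\vw}{\vx}-\vhi(\vw)}\dd\vw}\pi(\dd\vx).
\end{align*}
In the second line the $V_{\tau+1}$ factors cancel, and Fubini lets us integrate in $\vw$ first. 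The inner integral equals $e^{\psi(\vx)}$ by Definition~\ref{def:llt}, with our normalization $\int e^{-\vhi}=1$. This leaves $\int_A \exp(\inprod{\vy}{\vx}-\tau\psi(\vx)+V_\tau(\vy))\pi(\dd\vx)=\pi_\tau^{\vy}(A)$. Since $\{\vy_\tau\}$ is Markov and $\pi_\tau$ is a function of $\vy_\tau$, the same identity holds conditionally on the whole history.

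Equivalently, in the posterior picture \eqref{eq:llt-loc-process}, $\pi_\tau(A)=\Pr[\vx\in A\mid \va_1,\dots,\va_\tau]$. This is a Doob martingale, because $\vy_\tau$ is sufficient for $\vx$ given $\va_{1:\tau}$ (the likelihood factorizes through $\sum_i \va_i$). It also gives $\pi_0=\pi$.

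\textbf{Property (3), the hardest step.} Use the posterior representation. $\pi_\tau(A)$ is a bounded martingale, so it converges almost surely to $\Pr[\vx\in A\mid \mathcal{F}_\infty]$, where $\mathcal{F}_\infty=\sigma(\va_1,\va_2,\dots)$. It then suffices to show $\vx$ is $\mathcal{F}_\infty$-measurable, since then the limit is $\mathbbm{1}[\vx\in A]\in\{0,1\}$.

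Given $\vx$, the $\va_i$ are i.i.d.\ with mean $\nabla\psi(\vx)$ by Fact~\ref{fact:llt-derivatives}. By the strong law of large numbers, $\vy_\tau/\tau\to\nabla\psi(\vx)$ almost surely. It remains to recover $\vx$ from $\nabla\psi(\vx)$. The map $\nabla\psi$ is injective whenever $\psi$ is strictly convex, and this holds because $\nabla^2\psi(\vx)$ is the covariance of $\tilt_{\vx}\exp(-\vhi)$, a density on $\R^d$ with full-dimensional support, hence positive definite. So $\vx=(\nabla\psi)^{-1}(\lim_\tau \vy_\tau/\tau)$ is measurable with respect to $\mathcal{F}_\infty$.

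The technical care in this step goes to integrability for the law of large numbers, which holds since $\psi$ is finite everywhere and so all moments exist, and to measurability of $(\nabla\psi)^{-1}$, which holds as the inverse of a continuous injection on Borel sets.
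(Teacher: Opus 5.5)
Your proof is correct and follows essentially the same route as the paper. For property (2) you use the direct cancellation computation with the density $\nu_\tau^{\vy}$ from Lemma~\ref{lem:muk}, with the Doob-martingale/posterior view as an alternative, and for property (3) you use the strong law of large numbers $\vy_\tau/\tau \to \nabla\psi(\vx)$ together with invertibility of $\nabla\psi$; your appeal to martingale convergence to $\Pr[\vx \in A \mid \mathcal{F}_\infty]$, and your justification of injectivity via positive definiteness of $\nabla^2\psi$, make rigorous the paper's brief remark that $\pi_\tau \to \delta_{\vx}$ because $\pi_\tau$ is a posterior.
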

\begin{proof}
    We check the conditions of Definition \ref{def:loc_process}. First, (1) holds by definition.  To check (2), one possibility is to use Definition 3.1, \cite{Montanari25}; we have that $\pi_\tau$ is the posterior density of $X$ given $\vy_{\tau}$, which by the Markov property, is equal to the posterior density of $X$ given the entire filtration up to $\tau$, i.e., $\pi_\tau$ is a Doob measure-valued martingale.  Alternatively, to demonstrate (2) directly, first define $\mu_\tau$ to be the density of $\vw$ given $\vy_{\tau}$ in the described Markov chain.  Consider the density of $\pi_{\tau+1}$ conditioned on realizations of $\vy_{\tau}$ and $\vw$.  Then
    \begin{align*}
        \pi_{\tau+1}(\vx)
        &=\exp\Par{\inprod{\vy_{\tau+1}}{\vx}-(\tau+1)\psi(\vx)+V_{\tau+1}(\vy_{\tau+1})}\pi(\vx)\\
        &=\exp\Par{\inprod{\vy_{\tau}+\vw}{\vx}-(\tau+1)\psi(\vx)+V_{\tau+1}(\vy_\tau+\vw)}\pi(\vx)\\
        &=\exp\Par{\inprod{\vy_{\tau}}{\vx}-\tau\psi(\vx)+V_{\tau}(\vy_\tau)}\pi(\vx)\cdot \exp\Par{\inprod{\vw}{\vx}-\psi(\vx)-V_\tau(\vy_\tau)+V_{\tau+1}(\vy_\tau+\vw)}\\
        &=\exp\Par{\inprod{\vw}{\vx}-\psi(\vx)-V_\tau(\vy_\tau)+V_{\tau+1}(\vy_\tau+\vw)}  \pi_\tau(\vx).
    \end{align*}
    Therefore, 
    \[
        \mathbb{E}_{\pi_{\tau+1}}[f\mid \pi_\tau]
        =\int \int  f(\vx) \exp\Par{\inprod{\vw}{\vx}-\psi(\vx)-V_\tau(\vy_\tau)+V_{\tau+1}(\vy_\tau+\vw)}  \pi_\tau(\vx) \nu^{\vy_\tau}_\tau(\vw)\,\dd\vw\,\dd\vx.
    \]
    From Lemma \ref{lem:muk}, we have 
    \[\nu^{\vy_\tau}_\tau(\vw)=\exp(-\varphi(\vw)-V_{\tau+1}(\vy_\tau+\vw)+V_\tau(\vy_\tau)),\]
    and so
    \begin{align*}
        \mathbb{E}_{\pi_{\tau+1}}[f\mid \pi_\tau]
        &=\int \int f(\vx)\pi_{\tau}(\vx) \exp\Par{\inprod{\vw}{\vx}-\varphi(\vw)-\psi(\vx)}\,\dd\vw\,\dd\vx\\
        &=\mathbb{E}_{\pi_\tau}[f],
    \end{align*}
    using the definition of the LLT.
    
    To check (3), first fix $\vx\sim \pi$.  Then by the strong law of large numbers and Fact \ref{fact:llt-derivatives},
    \[\frac{\vy_\tau}{\tau}=\frac{\sum_{i=1}^\tau \va_i}{\tau}\xrightarrow[\tau\to\infty]{}\mathbb{E}_{\va_1\sim \mathcal{T}_{\vx}\exp(-\varphi)}[\va_1]=\nabla \psi(\vx).\]
    In other words, $\vy_\tau$ asymptotically gives perfect information about $\vx$ (since $\Par{\nabla \psi}^{-1}=\nabla \psi^*$, where $\psi^*$ is the convex conjugate of $\psi$; see \cite{BubeckE19}), and $\pi_\tau$ thus converges to $\delta_{\vx}, \vx\sim \pi$, as we have realized $\pi_\tau$ as a posterior density.
\end{proof}

We summarized the procedure for generating samples from $\pi_{\tau}$ in Algorithm \ref{alg:localization}.  Note that the algorithm relies on oracles for sampling from $\mathcal{T}_{\vy}\exp(-j\psi)\pi$ (for all $0 \le j \le \tau - 1$) and $\mathcal{T}_{\vz}\exp(-\vhi)$.
\section{Gibbs Sampling}\label{sec:gibbs_sampling}
In this section, we revisit general two-state Gibbs sampling, and provide several structural facts relating the variance contraction of the ``primal process'' that samples from one of two state spaces, to the variance contraction of the corresponding ``dual process'' on the other state space. These facts will be used to analyze Algorithm~\ref{alg:localization} in Section~\ref{sec:llt_alternate_sampling}. We remark much of the content in this section has been derived in the information theory literature through the lens of strong data processing inequalities \cite{Raginsky16,PolyanskiyW25}; we include brief rederivations to keep our exposition self-contained.

For readers more familiar with discrete probability, we provide an intuitive explanation of our main characterizations in Lemmas~\ref{lem:twice_gap} and~\ref{lem:equivalence-forward-backward} when both spaces are discrete in Appendix~\ref{app:gibbs_sampling_discrete}.

In this section, we work with two spaces $\calX, \calY$, equipped with $\sigma$-algebras $\Sigma_\calX,\Sigma_\calY$, respectively. For intuition, the reader may assume $\calX=\calY=\R^d$ equipped with the Borel $\sigma$-algebra.  $X, Y$ denote random variables on $\calX, \calY$, respectively.  Let $\Omega=\calX\times\calY$ be equipped with the product $\sigma$-algebra, and as stated in Section \ref{sec:preliminaries}, let $\tilde{\pi}$ be a joint
probability measure on $\Omega$ with marginal densities $\tilde{\pi}^X$, $\tilde{\pi}^Y$ and conditional densities $\tilde{\pi}^{Y\mid X}$, $\tilde{\pi}^{X\mid Y}$.

Consider the Hilbert spaces $L^2(\tpi^X), L^2(\tpi^Y)$, and define the operator $\sfK:L^2(\tpi^Y)\to L^2(\tpi^X)$ as
\begin{equation}\label{eq:forward-operator}
    [\sfK g](\vx)\defeq \int_{\calY} g(\vy)\tpi^{Y\mid X=\vx}(\dd\vy)=\mathbb{E}[g(Y)\mid X=\vx].
\end{equation}
We can derive the adjoint $\sfK^\dagger:L^2(\tpi^X)\to L^2(\tpi^Y)$ as follows
\begin{align*}
    \inprod{\sfK g}{f}_{L^2(\tpi^X)}
    &=\int_{\calX} f(\vx)[\sfK g](\vx)\tpi^{X}(\dd\vx)\\
    &=\int_{\calX} \int_{\calY} f(\vx)g(\vy)\tpi^{Y\mid X=\vx}(\dd\vy)\tpi^X(\dd \vx)\\
    &=\int_{\calX} \int_{\calY} f(\vx)g(\vy) \tpi(\dd\vx, \dd\vy)\\
    &=\int_{\calX}\int_{\calY} f(\vx)g(\vy)\tpi^{X\mid Y=\vy}(\dd\vx)\tpi^Y(\dd \vy)\\
    &=: \inprod{g}{\sfK^\dagger f}_{L^2(\tpi^Y)},
\end{align*}
which is simply Bayes' rule.  Then,
\begin{equation}\label{eq:backward-operator}
    [\sfK^\dagger f](\vy)\defeq \int_{\calX}f(\vx)\tpi^{X\mid Y=\vy}(\dd\vx)=\mathbb{E}[f(X)\mid Y=\vy].
\end{equation}

A single iteration of Gibbs sampling starts from $\vx\in\calX$, samples $\vy\sim\tpi^{Y\mid X=\vx}$, and finally samples
$\vx'\sim\tpi^{X\mid Y=\vy}$. Symmetrically, there are analogous dynamics on $\calY$. These dynamics reflectively define Markov operators $\sfP_{\calX}\defeq \sfK\sfK^\dagger, \sfP_{\calY}\defeq \sfK^\dagger \sfK$.  

It is immediate that $\tpi^X$ is stationary for $\sfP_{\calX}$. Indeed, 
\begin{align*}
    \mathbb{E}_{\tpi^X}[\sfP_{\calX} f]
    &= \int_{\calX }[\sfP_{\calX} f](\vx)\tpi^X(\dd\vx)\\
    &= \int_{\calX }[\sfK \sfK^\dagger f](\vx)\tpi^X(\dd\vx)\\
    &=\int_{\calX }\int_{\calX}\int_{\calY} f(\vx') \tpi(\vx, \dd\vy)\tpi(\dd\vx', \vy)\tpi^X(\dd\vx)\\
    &=\int_{\calX }\int_{\calX}\int_{\calY} f(\vx') \tpi(\dd\vx, \dd\vy)\tpi(\dd\vx', \vy)\\
    &=\int_{\calX}\int_{\calY}f(\vx')\tpi(\dd\vx', \vy)\tpi^Y(\dd\vy) \\
    &=\int_{\calX} f(\vx')\tpi^X(\dd\vx')\\
    &=\mathbb{E}_{\tpi^X}[f],
\end{align*}
and a similar calculation holds for $\sfP_{\calY}$.  Moreover, $\sfP_{\calX}, \sfP_{\calY}$ are self-adjoint on $L^2(\tpi^X), L^2(\tpi^Y)$:
\begin{equation}\label{eq:reversible}
    \inprod{\sfP_{\calX}f_1}{f_2}_{L^2(\tpi^X)}=\inprod{\sfK \sfK^\dagger f_1}{f_2}_{L^2(\tpi^X)}=\inprod{\sfK^\dagger f_1}{\sfK^\dagger f_2}_{L^2(\tpi^Y)}
    =\inprod{f_1}{\sfP_{\calX}f_2}_{L^2(\tpi^X)},
\end{equation}
and a similar calculation holds for $\sfP_{\calY}$.

\subsection{Variance contraction via spectral gap}
Now, we interlude with a discussion on the mixing rate for Markov chains associated to \emph{general} localization processes by completing the argument in Proposition 19, \cite{ChenE22}. That is, Proposition 19, \cite{ChenE22} gives a generic way of converting a variance conservation bound in a localization scheme to a spectral gap on the associated Markov chain. We provide a more explicit variant of this calculation that in fact holds for generic Markov chains, not just Gibbs samplers. For an analogous derivation in the discrete space setting, see Example 33.8, \cite{PolyanskiyW25}.

\begin{lemma}\label{lem:twice_gap}
For all functions $f \in L^2(\tpi^X)$,
\[\frac{\Var_{\tpi^X}[\sfP_{\calX} f]}{\Var_{\tpi^X}[f]} \le \lam_2\Par{\sfP_{\calX}}^2.\]
\end{lemma}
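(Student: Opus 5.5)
We will use the fact that $\sfP_{\calX}$ is self-adjoint on $L^2(\tpi^X)$ by \eqref{eq:reversible}, together with positivity: for any $f$, $\inprod{\sfP_{\calX} f}{f}_{L^2(\tpi^X)} = \norm{\sfK^\dagger f}_{L^2(\tpi^Y)}^2 \ge 0$. So $\sfP_{\calX}$ is a positive semidefinite self-adjoint contraction, with spectrum contained in $[0,1]$. The constant function $\mathbf{1}$ is an eigenfunction with eigenvalue $1$. We interpret $\lam_2(\sfP_{\calX})$ as the supremum of the spectrum of $\sfP_{\calX}$ restricted to the orthogonal complement $\mathbf{1}^\perp = \{f : \E_{\tpi^X}[f] = 0\}$. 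Equivalently,
\[\lam_2\Par{\sfP_{\calX}} = \sup_{f \perp \mathbf{1},\, f\neq 0} \frac{\inprod{\sfP_{\calX} f}{f}}{\norm{f}^2}.\]
This Rayleigh-quotient characterization is valid for a positive operator. For a general reversible chain one would instead use the second-largest absolute spectral value, and the same argument goes through.

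First, reduce to mean-zero $f$. Since $\E_{\tpi^X}[\sfP_{\calX} f] = \E_{\tpi^X}[f]$ (stationarity) and $\sfP_{\calX}\mathbf{1} = \mathbf{1}$, replacing $f$ by $f - \E_{\tpi^X}[f]$ changes neither the numerator nor the denominator. Both variances then become squared $L^2(\tpi^X)$ norms: $\Var[\sfP_{\calX} f] = \norm{\sfP_{\calX} f}^2$ and $\Var[f] = \norm{f}^2$. Also, $\mathbf{1}^\perp$ is invariant under $\sfP_{\calX}$ by self-adjointness, since $\inprod{\sfP_{\calX} f}{\mathbf{1}} = \inprod{f}{\sfP_{\calX}\mathbf{1}} = 0$.

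Next, bound $\norm{\sfP_{\calX} f}^2 = \inprod{\sfP_{\calX}^2 f}{f}$ using the spectral theorem for the bounded self-adjoint operator $\sfP_{\calX}|_{\mathbf{1}^\perp}$. Its spectral measure $E_f$ for $f$ is supported on $[0, \lam_2]$, so
\[\norm{\sfP_{\calX} f}^2 = \int s^2 \, \dd E_f(s) \le \lam_2^2 \int \dd E_f(s) = \lam_2^2 \norm{f}^2.\]
A more elementary route uses the Cauchy--Schwarz interpolation $\norm{\sfP_{\calX} f}^2 = \inprod{\sfP_{\calX}^{1/2} f}{\sfP_{\calX}^{3/2} f}$, but the spectral theorem is cleanest. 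Dividing by $\norm{f}^2$ gives the claim.

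The main subtlety is the spectral bookkeeping: confirming that the spectrum is in $[0,1]$, so that no negative eigenvalue near $-1$ can dominate $\norm{\sfP_{\calX} f}$, and that $\lam_2$ is defined as the top of the spectrum on $\mathbf{1}^\perp$ rather than as an attained eigenvalue. The positivity identity $\inprod{\sfK\sfK^\dagger f}{f} = \norm{\sfK^\dagger f}^2$ settles the first point. The invariance of $\mathbf{1}^\perp$ settles the second.
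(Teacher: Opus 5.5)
Your proposal is correct and is essentially the paper's argument: both center $f$, rewrite $\Var_{\tpi^X}[\sfP_\calX f]$ as $\inprod{\sfP_\calX^2 f}{f}$, and bound it by the top of the spectrum of $\sfP_\calX^2$ on $\mathbf{1}^\perp$. The paper phrases this through the Dirichlet form and spectral gap of $\sfP_\calX^2$, while you use the spectral measure, and your check that $\inprod{\sfP_\calX f}{f} = \norm{\sfK^\dagger f}^2 \ge 0$ usefully justifies the identity $\lam_2(\sfP_\calX^2) = \lam_2(\sfP_\calX)^2$, which the paper uses without comment.
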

\begin{proof}
We have
\begin{align*}
    \operatorname{Var}_{\tpi^X}[\sfP_{\calX}f]
    &=\mathbb{E}_{\tpi^X}[(\sfP_{\calX}f-\mathbb{E}_{\tpi^X}[\sfP_\calX f])^2]\\
    &=\mathbb{E}_{\tpi^X}[(\sfP_\calX f)^2]-\mathbb{E}_{\tpi^X}[f]^2\\
    &=\mathbb{E}_{\tpi^X}[f^2]-\mathbb{E}_{\tpi^X}[f^2]+\mathbb{E}_{\tpi^X}[(\sfP_\calX f)^2]-\mathbb{E}_{\tpi^X}[f]^2\\
    &=\operatorname{Var}_{\tpi^X}[f]-\Par{\mathbb{E}_{\tpi^X}[f^2]-\mathbb{E}_{\tpi^X}[(\sfP_\calX f)^2]}\\
    &=\operatorname{Var}_{\tpi^X}[f]-\Par{\norm{f}_{\tpi^X}^2-\inprod{\sfP_\calX f}{\sfP_\calX f}_{\tpi^X}}\\
    &=\operatorname{Var}_{\tpi^X}[f]-\Par{\norm{f}_{\tpi^X}^2-\inprod{\sfP_\calX^2 f}{f}_{\tpi^X}}
\end{align*}
by reversibility.  This object in the parentheses is the Dirichlet form $\mathcal{E}(f, f)$ associated with $\sfP_\calX^2$.  Then 
\begin{align*}
    \operatorname{Var}_{\tpi^X}[f]-\Par{\norm{f}_{\tpi^X}^2-\inprod{\sfP_\calX^2 f}{f}_{\tpi^X}}
    &=\Par{1-\frac{\mathcal{E}(f, f)}{\operatorname{Var}_{\tpi^X}[f]}}\operatorname{Var}_{\tpi^X}[f]\\
    &\leq \lambda_2(\sfP_\calX^2)\operatorname{Var}_{\tpi^X}[f]\\
    &=\lambda_2(\sfP_\calX)^2\operatorname{Var}_{\tpi^X}[f],
\end{align*}
obtaining the squared rate.
\end{proof}

We apply Lemma~\ref{lem:twice_gap} to prove a convergence rate on Gibbs sampling from the LLT-induced joint density \eqref{eq:joint-density} in Section~\ref{sec:llt_alternate_sampling}. Interestingly, for variance contraction specifically, the specialization of our result to Euclidean stochastic localization yields a rate that is twice as fast as derived in existing applications of localization schemes machinery (e.g., Theorem 58, \cite{ChenE22}), which owes to our sharper characterization in Lemma~\ref{lem:twice_gap}. We remark that the same generic ``forwards contraction implies backwards contraction'' characterization of Gibbs samplers is false in full generality, in the case of entropy instead of variance, cf.\ \cite{CaputoCGP25} for a counterexample.

\subsection{Contraction rate for Gibbs sampling}

We now give a more precise characterization of this squared rate in Gibbs sampling due to contraction of both forward and backward channels (at the same rate). In particular, we only require proving a bound on the contraction factor of one of the two channels, as contraction of the other channel follows from the fact that the operators on the two channels are adjoint.

We start with the following two lemmas.

\begin{lemma}\label{lem:mean-zero-map}
    The operators $\sfK, \sfK^\dagger$ map mean-zero functions to mean-zero functions, i.e., for $g\in L^2(\tpi^Y)$ with  $\mathbb{E}_{\tpi^Y}[g]=0$ and  $f\in L^2(\tpi^X)$ with $\mathbb{E}_{\tpi^X}[f]=0$, 
    \begin{align*}
        &\mathbb{E}_{\tpi^X}[\sfK g]=0,\\
        &\mathbb{E}_{\tpi^Y}[\sfK^\dagger f]=0.
    \end{align*}
\end{lemma}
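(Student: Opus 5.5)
The plan is to use the tower property of conditional expectation, i.e., the same Fubini manipulation that produced the adjoint formula for $\sfK^\dagger$ above. We take $g\in L^2(\tpi^Y)$ with $\mathbb{E}_{\tpi^Y}[g]=0$ and unfold the definition \eqref{eq:forward-operator}:
\[
\mathbb{E}_{\tpi^X}[\sfK g]=\int_{\calX}\int_{\calY} g(\vy)\,\tpi^{Y\mid X=\vx}(\dd\vy)\,\tpi^X(\dd\vx)=\int_{\calX\times\calY} g(\vy)\,\tpi(\dd\vx,\dd\vy)=\int_{\calY} g(\vy)\,\tpi^Y(\dd\vy)=0.
\]
The middle equality is the disintegration of $\tpi$ into $\tpi^X$ and $\tpi^{Y\mid X}$. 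The next equality comes from integrating out $\vx$, which leaves the marginal $\tpi^Y$.

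Equivalently, in operator language, we can take $f\equiv 1$ in the adjoint identity $\inprod{\sfK g}{f}_{L^2(\tpi^X)}=\inprod{g}{\sfK^\dagger f}_{L^2(\tpi^Y)}$. Since $\tpi^{X\mid Y=\vy}$ is a probability measure, $\sfK^\dagger 1=1$, so $\mathbb{E}_{\tpi^X}[\sfK g]=\inprod{g}{1}_{L^2(\tpi^Y)}=\mathbb{E}_{\tpi^Y}[g]=0$. The second claim is symmetric: $\sfK 1=1$ because $\tpi^{Y\mid X=\vx}$ is a probability measure, so $\mathbb{E}_{\tpi^Y}[\sfK^\dagger f]=\inprod{1}{\sfK^\dagger f}_{L^2(\tpi^Y)}=\inprod{\sfK 1}{f}_{L^2(\tpi^X)}=\mathbb{E}_{\tpi^X}[f]=0$.

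The only point needing care is justifying Fubini/Tonelli, which requires integrability. Since $\tpi^X$ and $\tpi^Y$ are probability measures, $L^2\subseteq L^1$, so $g\in L^1(\tpi^Y)$ and $f\in L^1(\tpi^X)$. Hence $|g(\vy)|$ is integrable against the joint measure $\tpi$, and exchanging the order of integration is valid. I do not expect any genuine obstacle beyond this measure-theoretic bookkeeping.
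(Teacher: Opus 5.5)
Your proof is correct and matches the paper's argument: both use the iterated-expectation computation $\mathbb{E}_{\tpi^X}[\sfK g]=\mathbb{E}_{\tpi^Y}[g]=0$ and handle $\sfK^\dagger$ symmetrically. Your reformulation via the adjoint identity with the constant function $1$, and your $L^2\subseteq L^1$ remark justifying Fubini, are valid additions rather than a different route.
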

\begin{proof}
    This is a simple calculation in iterated expectation.  We have
    \[\mathbb{E}_{\tpi^X}[\sfK g]=\mathbb{E}_{\tpi^X}[\mathbb{E}[g(Y)\mid X=\vx]]=\mathbb{E}_{\tpi^Y}[g]=0.\]
    A similar calculation proves the statement for $\sfK^\dagger$.
\end{proof}

\begin{lemma}\label{lem:norm-subspaces}
    Write $\sfK_{\mid \mathbf{1}^\perp}, \sfK^\dagger_{\mid \mathbf{1}^\perp}$ to be the restriction of the operators $\sfK, \sfK^\dagger$ to mean-zero functions on $L^2(\tpi^Y), L^2(\tpi^X)$, respectively.  Then $\norms{\sfK_{\mid \mathbf{1}^\perp}}_{L^2(\tpi^Y)}=\norms{\sfK^\dagger_{\mid \mathbf{1}^\perp}}_{L^2(\tpi^X)}$.
\end{lemma}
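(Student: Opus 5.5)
The proof is the Hilbert-space fact that an operator and its adjoint have the same norm. We only need to check that the restricted operators are adjoint to each other between the mean-zero subspaces.

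Write $H_X \defeq \{f \in L^2(\tpi^X) : \E_{\tpi^X}[f] = 0\}$ and $H_Y \defeq \{g \in L^2(\tpi^Y) : \E_{\tpi^Y}[g]=0\}$. Both are closed subspaces, being the orthogonal complements of the constant function $\mathbf{1}$, so they are Hilbert spaces with the inherited inner products. By Lemma~\ref{lem:mean-zero-map}, $\sfK$ maps $H_Y$ into $H_X$ and $\sfK^\dagger$ maps $H_X$ into $H_Y$. Hence $\sfK_{\mid \mathbf{1}^\perp}: H_Y \to H_X$ and $\sfK^\dagger_{\mid \mathbf{1}^\perp}: H_X \to H_Y$ are well-defined bounded operators. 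Boundedness holds because, by Jensen's inequality on conditional expectations, both $\sfK$ and $\sfK^\dagger$ are contractions on the full $L^2$ spaces.

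Next, the Bayes'-rule computation preceding \eqref{eq:backward-operator} gives $\inprod{\sfK g}{f}_{L^2(\tpi^X)} = \inprod{g}{\sfK^\dagger f}_{L^2(\tpi^Y)}$ for all $g, f$. In particular this holds for $g \in H_Y$ and $f \in H_X$, so $\sfK^\dagger_{\mid \mathbf{1}^\perp}$ is exactly the Hilbert adjoint of $\sfK_{\mid \mathbf{1}^\perp}$ as a map $H_Y \to H_X$.

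We then apply $\norm{\ma} = \norm{\ma^\dagger}$ for bounded operators between Hilbert spaces. The argument is
\[\norm{\sfK_{\mid \mathbf{1}^\perp}} = \sup_{\substack{g \in H_Y,\ f \in H_X \\ \norm{g}, \norm{f} \le 1}} \inprod{\sfK g}{f}_{L^2(\tpi^X)} = \sup_{\substack{g \in H_Y,\ f \in H_X \\ \norm{g}, \norm{f} \le 1}} \inprod{g}{\sfK^\dagger f}_{L^2(\tpi^Y)} = \norm{\sfK^\dagger_{\mid \mathbf{1}^\perp}}.\]
The first and last equalities use the variational characterization of the norm, $\norm{\vv} = \sup_{\norm{\vu} \le 1} \inprod{\vv}{\vu}$, applied inside $H_X$ and $H_Y$ respectively. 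This is valid because $\sfK g \in H_X$ and $\sfK^\dagger f \in H_Y$.

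The only point needing care is that the supremum defining $\norm{\sfK g}$ must be taken over the subspace $H_X$ rather than all of $L^2(\tpi^X)$. This is exactly where Lemma~\ref{lem:mean-zero-map} is needed, and the rest is routine.
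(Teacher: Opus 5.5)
Your proof is correct and follows the same route as the paper: you identify $\sfK^\dagger_{\mid \mathbf{1}^\perp}$ as the Hilbert adjoint of $\sfK_{\mid \mathbf{1}^\perp}$ between the mean-zero subspaces, using the Bayes'-rule identity together with Lemma~\ref{lem:mean-zero-map}, and then invoke the fact that an operator and its adjoint have equal norms. The differences are cosmetic: you prove the norm equality directly via the double-supremum formula, and you state explicitly where Lemma~\ref{lem:mean-zero-map} enters, which the paper's non-degeneracy argument leaves implicit.
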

\begin{proof}
    It suffices to show $\sfK^\dagger_{\mid \mathbf{1}^\perp}=(\sfK_{\mid \mathbf{1}^\perp})^\dagger$. The result then follows as the norms of a bounded operator and its adjoint are equal.  We have, for $f, g$ mean-zero in $L^2(\tpi^X), L^2(\tpi^Y)$, respectively,
    \begin{align*}
        \inprod{g}{\Par{\Par{\sfK_{\mid \mathbf{1}^\perp}}^\dagger -\sfK^\dagger_{\mid \mathbf{1}^\perp} } f}_{L^2(\tpi^Y)}
        &=\inprod{g}{\Par{\sfK_{\mid \mathbf{1}^\perp}}^\dagger f}_{L^2(\tpi^Y)} -\inprod{g}{\sfK^\dagger_{\mid \mathbf{1}^\perp} f}_{L^2(\tpi^Y)}\\
        &=\inprod{g}{\Par{\sfK_{\mid \mathbf{1}^\perp}}^\dagger f}_{L^2(\tpi^Y)} -\inprod{g}{\sfK^\dagger f}_{L^2(\tpi^Y)}\\
        &=\inprod{\sfK_{\mid \mathbf{1}^\perp} g}{f}_{L^2(\tpi^X)}-\inprod{\sfK g}{f}_{L^2(\tpi^X)}\\
        &=0.
    \end{align*}
    By non-degeneracy of the inner product, we have the claim $(\sfK_{\mid \mathbf{1}^\perp})^\dagger =\sfK^\dagger_{\mid \mathbf{1}^\perp}$.
\end{proof}

Our interpretation of variance contraction for Gibbs sampling is captured by the following lemma. 

\begin{lemma}\label{lem:equivalence-forward-backward}
    We have \begin{equation}\label{eq:primal_dual_contract}
    \sup_{\mathbb{E}_{\tpi^Y}[g]=0}\frac{\Var_{\tpi^X}[\sfK g]}{\Var_{\tpi^Y}[g]} = \sup_{\mathbb{E}_{\tpi^X}[f]=0}\frac{\Var_{\tpi^Y}[\sfK^\dagger f]}{\Var_{\tpi^X}[f]}.
    \end{equation}
Moreover, if both quantities in \eqref{eq:primal_dual_contract} are $\beta < 1$, then for all mean-zero $f$ in $L^2(\tpi^X)$,
\[\frac{\operatorname{Var}_{\tpi^X}[\sfP_\calX f]}{\operatorname{Var}_{\tpi^X}[f]} \leq \beta^2 .\]
\end{lemma}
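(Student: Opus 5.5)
The plan is to reduce both suprema in \eqref{eq:primal_dual_contract} to squared operator norms of the restricted operators from Lemma~\ref{lem:norm-subspaces}, and then use the factorization $\sfP_\calX = \sfK\sfK^\dagger$ to chain the two contractions.

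\textbf{Reduction to operator norms.} For mean-zero $g \in L^2(\tpi^Y)$, Lemma~\ref{lem:mean-zero-map} gives $\E_{\tpi^X}[\sfK g] = 0$. Hence
\[\Var_{\tpi^X}[\sfK g] = \norms{\sfK g}_{L^2(\tpi^X)}^2 \quad\text{and}\quad \Var_{\tpi^Y}[g] = \norms{g}_{L^2(\tpi^Y)}^2.\]
It follows that the left-hand supremum in \eqref{eq:primal_dual_contract} equals $\norms{\sfK_{\mid \mathbf{1}^\perp}}^2$. Symmetrically, the right-hand supremum equals $\norms{\sfK^\dagger_{\mid \mathbf{1}^\perp}}^2$. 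Lemma~\ref{lem:norm-subspaces} says these two norms coincide, which proves the first claim. One small point to confirm is that $\sfK_{\mid\mathbf{1}^\perp}$ is viewed as a map between the mean-zero subspaces, which is legitimate by Lemma~\ref{lem:mean-zero-map}. This is how the adjoint identity in Lemma~\ref{lem:norm-subspaces} should be read.

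\textbf{Chaining the contractions.} Take mean-zero $f \in L^2(\tpi^X)$. Then $\sfK^\dagger f$ is mean-zero by Lemma~\ref{lem:mean-zero-map}, and so is $\sfP_\calX f = \sfK(\sfK^\dagger f)$. Applying the two contraction bounds in sequence gives
\[\Var_{\tpi^X}[\sfK\sfK^\dagger f] \le \beta\,\Var_{\tpi^Y}[\sfK^\dagger f] \le \beta^2\,\Var_{\tpi^X}[f].\]
The first inequality uses the forward bound with $g = \sfK^\dagger f$, and the second uses the backward bound with $f$ itself. This is the claimed $\beta^2$ rate.

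Since the statement restricts attention to mean-zero $f$, no extension is needed. In any case, constants are fixed by $\sfP_\calX$ and do not affect the variance, so the bound would extend to arbitrary $f$.

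\textbf{Main obstacle.} The substantive step is the equality of the restricted norms, and that is exactly Lemma~\ref{lem:norm-subspaces}. What remains is careful bookkeeping: identifying each variance with a squared $L^2$ norm relies on the mean-zero preservation property, and the argument otherwise reduces to this identification.
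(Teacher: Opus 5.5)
Your proposal is correct and follows the paper's proof: the first claim comes from Lemmas~\ref{lem:mean-zero-map} and~\ref{lem:norm-subspaces}, where you make explicit the identification of each supremum with a squared restricted operator norm. The second claim chains the forward and backward contractions through the mean-zero function $\sfK^\dagger f$, exactly as the paper does.
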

\begin{proof}
The first statement is a consequence of Lemmas~\ref{lem:mean-zero-map} and~\ref{lem:norm-subspaces}. To see the second, for any $f$ with $\E_{\tpi^X}[f]=0$, we have \begin{align*}
        \Var_{\tpi^X}[\sfP_\calX f] =\Var_{\tpi^X}[\sfK \sfK^\dagger f] \le \beta \Var_{\tpi^Y}[ \sfK^\dagger f] \le \beta^2\Var_{\tpi^X}[f],
    \end{align*}
    as long as we ensure $\sfK^\dagger f$ is mean-zero; but this is Lemma \ref{lem:mean-zero-map}.
\end{proof}

Lemma~\ref{lem:equivalence-forward-backward} is a somewhat more explicit variant of Lemma~\ref{lem:twice_gap}, which shows that controlling just one of the two quantities in \eqref{eq:primal_dual_contract} is sufficient to obtain the squared contraction rate for the overall Gibbs sampler. This is convenient in Section~\ref{sec:llt_alternate_sampling}, as one of the relevant bounds in \eqref{eq:primal_dual_contract} is much more straightforward to obtain in our particular application of LLT proximal sampling.

\section{LLT Proximal Sampling}\label{sec:llt_alternate_sampling}

We instantiate Gibbs sampling from the previous section with the LLT-based proximal sampler using the joint density \eqref{eq:joint-density}, restated for convenience:
\[\tilde{\pi}_{(\tau)}(\vx, \vy)=\exp\Par{\inprod{\vx}{\vy}-\varphi^{*\tau}(\vy)-\tau\psi(\vx)}\pi(\vx),\]
where $\pi$ is the target distribution and $\tau$ is a fixed step-size.  With respect to this joint distribution, let $\sfK, \sfK^\dagger$ be the operators as in \eqref{eq:forward-operator}, \eqref{eq:backward-operator}, respectively.  We can see that the corresponding Gibbs sampler is implemented as Algorithm~\ref{alg:alternate} (note that this implementation of the process uses the perspective in \eqref{eq:llt-loc-process}, rather than its Markov reformulation in Lemma~\ref{lem:ck-markov}).

\begin{algorithm2e}[t!]
\caption{$\mathsf{LLT\text{-}Prox}(\tau, \vx_0, K)$}
\label{alg:alternate}
\DontPrintSemicolon
\textbf{Input: }localization parameter $\tau \in \N$, initial $\vx_0 \in \R^d$, iteration count $K \in \N$\\
\For{$0 \le k < K$}{
$\vy_k \sim \tilt_{\vx_k}\exp(-\vhi^{*\tau})$\; \label{line:sample_y}
$\vx_{k + 1} \sim \tilt_{\vy_k}\exp(-\tau\psi)\pi$\; \label{line:sample_x}
}
\textbf{Return:} $\vx_K$
\end{algorithm2e}

We first show the dynamics corresponding to Gibbs sampling on $\tilde{\pi}_{(\tau)}$ are equivalent to the localization dynamics $\mathsf{p}_\tau^\pi$ \eqref{eq:loc_scheme_kernel} induced by the localization process of \eqref{eq:llt-loc-process}.
\begin{proposition}\label{prop:gibbs_loc}
    The dynamics corresponding to Gibbs sampling on $\tilde{\pi}_{(\tau)}$ are equivalent to the localization dynamics induced by $\{\pi_\tau\}_{\tau\geq 0}$ defined in \eqref{eq:llt-loc-process}.
\end{proposition}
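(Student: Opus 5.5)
We will show that the two Markov kernels on $\calX$ coincide by writing each as an explicit density. For the Gibbs sampler on $\tpi_{(\tau)}$, one step from $\vx$ draws $\vy \sim \tpi_{(\tau)}^{Y\mid X=\vx}$ and then $\vx' \sim \tpi_{(\tau)}^{X\mid Y=\vy} = \pi_\tau^{\vy}$. Its kernel is therefore
\[
\int \tpi_{(\tau)}^{Y\mid X=\vx}(\vy)\, \pi_\tau^{\vy}(A)\,\dd\vy .
\]
For the localization dynamics \eqref{eq:loc_scheme_kernel} at time $T=\tau$, the randomness is $\vy_\tau \sim \tpi_{(\tau)}^Y$, and $\pi_\tau = \pi_\tau^{\vy_\tau}$. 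This holds because the process \eqref{eq:llt-loc-process} generates $\vy_\tau$ with exactly the marginal $\tpi_{(\tau)}^Y$, as computed via Lemma~\ref{lem:lltconv} (equivalently Proposition~\ref{prop:renormalization-main}). Hence
\[
\mathsf{p}_\tau^\pi(A\mid \vx) = \int \tpi_{(\tau)}^Y(\vy)\, \frac{\pi_\tau^{\vy}(\vx)}{\pi(\vx)}\, \pi_\tau^{\vy}(A)\,\dd\vy .
\]

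The key step is to identify the weight in this integral. By Bayes' rule for the joint density \eqref{eq:joint-density},
\[
\tpi_{(\tau)}^Y(\vy)\,\pi_\tau^{\vy}(\vx) = \tpi_{(\tau)}(\vx,\vy) = \pi(\vx)\,\tpi_{(\tau)}^{Y\mid X=\vx}(\vy),
\]
using $\tpi_{(\tau)}^X = \pi$. We can also check this directly: $\tpi_{(\tau)}^{Y\mid X=\vx}(\vy) = \exp(\inprod{\vx}{\vy} - \vhi^{*\tau}(\vy) - \tau\psi(\vx))$ is a normalized density, since $(\vhi^{*\tau})^\sharp = \tau\psi$ by Lemma~\ref{lem:lltconv}. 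Dividing by $\pi(\vx)$ shows that the weight $\tpi_{(\tau)}^Y(\vy)\,\pi_\tau^{\vy}(\vx)/\pi(\vx)$ equals $\tpi_{(\tau)}^{Y\mid X=\vx}(\vy)$. Substituting this into the formula for $\mathsf{p}_\tau^\pi$ gives exactly the Gibbs kernel, for $\pi$-a.e.\ $\vx$.

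It then remains to note that $\tpi_{(\tau)}^{Y\mid X=\vx} = \tilt_{\vx}\exp(-\vhi^{*\tau})$ and $\pi_\tau^{\vy} = \tilt_{\vy}\exp(-\tau\psi)\pi$. These are precisely Lines~\ref{line:sample_y} and~\ref{line:sample_x} of Algorithm~\ref{alg:alternate}, so the Gibbs sampler is that algorithm.

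The main obstacle is only a matter of care, not a hard estimate. We must justify that the law of $\vy_\tau$ in \eqref{eq:llt-loc-process} is $\tpi_{(\tau)}^Y$ and that $\pi_\tau$ is indeed the posterior $\tpi_{(\tau)}^{X\mid Y=\vy_\tau}$, both of which are already established above. We must also handle the null sets where $\pi(\vx)=0$, where kernels are only defined $\pi$-a.e.
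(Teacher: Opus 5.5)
Your proposal is correct and follows essentially the same route as the paper. Both arguments expand the localization kernel \eqref{eq:loc_scheme_kernel} using $\vy_\tau\sim\tpi_{(\tau)}^Y$ and $\pi_\tau=\tpi_{(\tau)}^{X\mid Y=\vy_\tau}$, then apply Bayes' rule to rewrite the weight $\tpi_{(\tau)}^Y(\vy)\,\pi_\tau^{\vy}(\vx)/\pi(\vx)$ as $\tpi_{(\tau)}^{Y\mid X=\vx}(\vy)$, which yields exactly the Gibbs kernel; the differences (sets $A$ instead of the pointwise density at $\vx'$, and the explicit $\pi$-a.e.\ caveat) are cosmetic.
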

\begin{proof}
    From the construction given in \eqref{eq:llt-loc-process}, we draw $\vw\sim \pi$ and $\vy\mid \vw\sim \tilde{\pi}_{(\tau)}^{Y\mid X=\vw}$, and set $\pi_\tau=\tilde{\pi}_{(\tau)}^{X\mid Y=\vy}$.  Then
    \begin{align*}
        \mathsf{p}_{\tau}^\pi(\vx'\mid \vx) 
        &=\mathbb{E}\left[ \frac{\pi_\tau(\vx')\pi_\tau(\vx)}{\pi(\vx)} \right]\\
        &=\mathbb{E}_{(\vw, \vc)\sim \tilde{\pi}_{(\tau)}}\left[ \frac{\tilde{\pi}_{(\tau)}^{X\mid Y=\vy}(\vx')\tilde{\pi}_{(\tau)}^{X\mid Y=\vy}(\vx)}{\pi(\vx)} \right]\\
        &=\mathbb{E}_{\vc\sim \tilde{\pi}_{(\tau)}^Y}\left[ \frac{\tilde{\pi}_{(\tau)}^{X\mid Y=\vy}(\vx')\tilde{\pi}_{(\tau)}(\vx, \vy)}{\tilde{\pi}_{(\tau)}^Y(\vy)\pi(\vx)} \right]\\
        &=\int \tilde{\pi}_{(\tau)}^{X\mid Y=\vy}(\vx') \tilde{\pi}_{(\tau)}^{Y\mid X=\vx}(\vy)\,\dd\vy.
    \end{align*}
    This is precisely Gibbs sampling.
\end{proof}

Henceforth, as we fix $\tau$ as step-size (in contrast to Section \ref{sec:functional_stochastic_localization} and Proposition~\ref{prop:gibbs_loc}, where $\tau$ is viewed as time), we write $\tpi:=\tpi_{(\tau)}$.  To prove variance contraction, we use the following technical lemma.
\begin{lemma}\label{lem:dual-poincare}
    Let $\vhi$ be convex, $\psi\defeq \vhi^\sharp$, and let $\pi=\tpi^X$ satisfy an $\alpha$-$\psi$-Poincar\'e inequality.  Then $\tilde{\pi}^Y$ satisfies an $\frac{\alpha}{\tau(\alpha+\tau)}$-$\vhi$-Poincar\'e inequality.
\end{lemma}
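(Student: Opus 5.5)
The plan is to apply the law of total variance \eqref{eq:total-var}, conditioning on $X$. Fix a smooth $g$ on $\calY$ and let $(X,Y)\sim\tpi$. Then
\[\Var_{\tpi^Y}[g] = \E_{X\sim\pi}\Brack{\Var[g(Y)\mid X]} + \Var_{\pi}\Brack{h}, \qquad h(\vx)\defeq [\sfK g](\vx) = \E[g(Y)\mid X=\vx].\]
We bound the first term using a Poincar\'e inequality for the conditional law of $Y$ given $X$. We bound the second term using the $\alpha$-$\psi$-PI for $\pi$, and then reduce it back to the first term. If both steps work, the constants combine to $\tau + \tfrac{\tau^2}{\alpha} = \tfrac{\tau(\alpha+\tau)}{\alpha}$, which is exactly the claimed constant.

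\textbf{Step 1 (conditional PI).} By \eqref{eq:joint-density}, $\tpi^{Y\mid X=\vx} = \tilt_{\vx}\exp(-\vhi^{*\tau})$. Tilting commutes with convolution, so this law is the $\tau$-fold convolution of $\tilt_{\vx}\exp(-\vhi)$; equivalently, it is the law of $\vy_\tau$ in \eqref{eq:llt-loc-process}. Each factor $\tilt_{\vx}\exp(-\vhi)$ has potential $\vhi - \inprod{\vx}{\cdot} + \psi(\vx)$, which differs from $\vhi$ by an affine function. Hence it is $1$-$\vhi$-strongly log-concave, and Fact~\ref{fact:brascamp_lieb} gives a $1$-$\vhi$-PI. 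Applying Lemma~\ref{lem:poincare-conv} $\tau-1$ times, $\tpi^{Y\mid X=\vx}$ satisfies a $\tfrac1\tau$-$\vhi$-PI. Therefore
\[\E\Brack{\Var[g(Y)\mid X]} \le \tau\, \E_{\tpi^Y}\Brack{\norm{\nabla g}^2_{(\nabla^2\vhi)^{-1}}}.\]

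\textbf{Step 2 (the mean term).} By the $\alpha$-$\psi$-PI, $\Var_\pi[h] \le \tfrac1\alpha \E_\pi\Brack{\norm{\nabla h}^2_{(\nabla^2\psi)^{-1}}}$. Since $\tpi^{Y\mid X=\vx}$ is an exponential family in $\vx$ with log-partition $\tau\psi$, differentiating under the integral gives $\nabla h(\vx) = \Cov\Par{g(Y), Y \mid X=\vx}$. Moreover, by Fact~\ref{fact:llt-derivatives} and additivity of cumulants, $\Cov(Y\mid X=\vx) = \tau\nabla^2\psi(\vx) \eqqcolon \msig$. Cauchy--Schwarz gives, for every $\vv$,
\[\inprod{\vv}{\nabla h(\vx)} = \Cov\Par{g(Y),\inprod{\vv}{Y}\mid X=\vx} \le \sqrt{\Var[g(Y)\mid X=\vx]\cdot \vv^\top\msig\vv},\]
so $\norm{\nabla h(\vx)}^2_{\msig^{-1}} \le \Var[g(Y)\mid X=\vx]$. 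Since $(\nabla^2\psi)^{-1} = \tau\msig^{-1}$, this yields
\[\Var_\pi[h] \le \frac{\tau}{\alpha}\E\Brack{\Var[g(Y)\mid X]} \le \frac{\tau^2}{\alpha}\E_{\tpi^Y}\Brack{\norm{\nabla g}^2_{(\nabla^2\vhi)^{-1}}},\]
where the last inequality reuses Step 1. Adding the two bounds finishes the proof.

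\textbf{Main obstacle.} The key idea is the Cauchy--Schwarz step that converts the gradient of the conditional mean back into the conditional variance, measured in exactly the metric $(\nabla^2\psi)^{-1}$ that the $\psi$-PI charges. This is where the pairing between $\psi=\vhi^\sharp$ as a cumulant generating function and $\psi$ as the Poincar\'e metric enters. The remaining work is technical: justifying differentiation under the integral, which should follow from finiteness of the LLT on $\R^d$ for convex $\vhi$, and applying Lemma~\ref{lem:poincare-conv} with the same non-Euclidean metric $\nabla^2\vhi$ for all summands.
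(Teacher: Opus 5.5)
Your proposal follows the paper's proof essentially line by line. Both use the law of total variance conditioned on $X$, then Brascamp--Lieb plus Lemma~\ref{lem:poincare-conv} for the conditional-variance term. Both then use the identity $\nabla h(\vx)=\Cov(g(Y),Y\mid X=\vx)$ with Cauchy--Schwarz to turn the $\psi$-PI back into conditional variance. Your Step 2 is sound. It also correctly centers at $\tau\nabla\psi(\vx)$, where the paper's display writes $\nabla\psi(\vx)$. The problem is Step 1, precisely the point you set aside as ``technical.'' It is a genuine gap, and the paper's proof has the same gap.

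A $\vhi$-PI is not translation invariant when $\nabla^2\vhi$ is non-constant. In the Efron--Stein argument behind Lemma~\ref{lem:poincare-conv}, you apply the PI of the first summand to $\vx\mapsto f(\vx+\vy)$. This produces $\E[\norm{\nabla f(X+\vy)}^2_{(\nabla^2\vhi(X))^{-1}}]$, with the metric evaluated at the summand $X$ rather than at the sum, and the two cannot be exchanged.

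The resulting claim is actually false. Take $d=1$, $\vhi(y)=y^4/4$ (up to normalization), $\tau=2$ and $\vx=\mathbf{0}$, so $Y=A_1+A_2$ with $A_i$ i.i.d.\ $\propto e^{-a^4/4}$. Take $f(y)=y^2$, for which $f'(y)^2/\vhi''(y)=4/3$ identically. Stein's identity gives $\E A^4=1$, and $\E A^2=2\Gamma(3/4)/\Gamma(1/4)\approx 0.676$. Hence $\Var[Y^2]=2\E A^4+2(\E A^2)^2\approx 2.91>8/3=\tau\cdot\tfrac43$. Adding $\eps y^2/2$ to $\vhi$ removes the degeneracy at $0$ without changing the conclusion. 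So $\tilt_{\vx}\exp(-\vhi^{*\tau})$ need not satisfy a $\tfrac1\tau$-$\vhi$-PI.

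This cannot be patched within Step 1, because the lemma's conclusion itself fails. The density $\pi\propto\exp(-\alpha\psi)$ satisfies an $\alpha$-$\psi$-PI. As $\alpha\to\infty$, $\tpi^Y\to\exp(-\vhi^{*2})$ while the claimed constant tends to $\tfrac12$. So for this $\vhi$ the conclusion fails for large $\alpha$ at $\tau=2$, and the same computation shows it fails for every $\tau\ge 2$.

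Your argument does correctly prove three things. It proves the statement for $\tau=1$, where Brascamp--Lieb applies directly with no convolution. It proves the statement for quadratic $\vhi$. And it proves the Step 2 inequality $\Var_\pi[h]\le\frac\tau\alpha\E[\Var(g(Y)\mid X)]$, which is all that Corollary~\ref{cor:halfway_var} and Theorem~\ref{thm:mixing} actually use.
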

\begin{proof}
Write $g(\vx) \defeq \mathbb{E}_{\vy\sim \tilde{\pi}^{Y\mid X=\vx}}[f(\vy)]$ as the conditional mean. Using the law of total variance \eqref{eq:total-var},
\begin{equation}\label{eq:total-var-piY}\Var_{\tpi^Y}[f(\vy)] = \E_{\vx \sim \tpi^X}[\Var_{\vy \sim \tilde{\pi}^{Y\mid X=\vx}}[f(\vy)]] + \Var_{\vx \sim \tpi^X}[g(\vx)].\end{equation}
To bound the first term in \eqref{eq:total-var-piY}, we apply Fact~\ref{fact:brascamp_lieb} and Lemma \ref{lem:poincare-conv} : $\tilt_{\vx}\exp(-\vhi)$ is $1$-strongly log-concave w.r.t.\ $\vhi$ and so satisfies $1$-$\vhi$-PI, and thus $\tilde{\pi}^{Y\mid X=\vx}=\mathcal{T}_{\vx}\exp(-\vhi^{*\tau})$ satisfies $\tau^{-1}$-$\vhi$-PI.  Therefore,
\begin{equation}\label{eq:dpi_term1}
\begin{aligned}\E_{\vx \sim \tpi^X}[\Var_{\vy \sim \tilde{\pi}^{Y\mid X=\vx}}[f(\vy)]] &\le \tau \E_{\vx \sim \tpi^X}\Brack{\E_{\vy \sim \tilde{\pi}^{Y\mid X=\vx}}\Brack{\norm{\nabla f(\vy)}_{(\nabla^2 \vhi(\vy))^{-1}}^2}} \\
&= \tau \E_{\vy \sim \tpi^Y }\Brack{\norm{\nabla f(\vy)}_{(\nabla^2 \vhi(\vy))^{-1}}^2}.
\end{aligned}
\end{equation}
Now we proceed with the second term in~\eqref{eq:total-var-piY}. We first compute $\nabla g$. We start by introducing the following helpful notation, where we use the derivations from Fact~\ref{fact:llt-derivatives}:
\begin{align*}
\vm(\vx) &\defeq \E_{\vy \sim \tilde{\pi}^{Y\mid X=\vx}}[\vy] = \nabla \psi(\vx), \\
\msig(\vx) &\defeq \Cov_{\vy \sim \tilde{\pi}^{Y\mid X=\vx}}[\vy] = \tau\nabla^2 \psi(\vx), \\
f_\vx(\vy) &\defeq f(\vy) - \E_{\vy' \sim \tilde{\pi}^{Y\mid X=\vx}}[f(\vy')]= f(\vy) - g(\vx), \\
\vy_\vx &\defeq \vy - \E_{\vy'\sim\tilde{\pi}^{Y\mid X=\vx}}[\vy']= \vy - \vm(\vx).
\end{align*}
In other words, $f_\vx$ and $\vy_\vx$ are ``centered'' versions of $f$ and $\vy$. Then we have
\begin{align*}
\nabla g(\vx) &= \nabla_{\vx}\Par{ \int f(\vy)\exp\Par{\inprod{\vx}{\vy} - \vhi^{*\tau}(\vy) - \tau \psi(\vx)} \dd \vy} \\
&= \int \Par{\vy - \nabla \psi(\vx)} f(\vy)\exp\Par{\inprod{\vx}{\vy} - \vhi^{*\tau}(\vy) - \tau\psi(\vx)} \dd \vy \\
&= \int \vy_{\vx} f(\vy)\exp\Par{\inprod{\vx}{\vy} - \vhi^{*\tau}(\vy) - \tau\psi(\vx)} \dd \vy \\
&= \int \vy_{\vx} f_{\vx}(\vy)\exp\Par{\inprod{\vx}{\vy} - \vhi^{*\tau}(\vy) - \tau\psi(\vx)} \dd \vy \\
&= \E_{\vy \sim \tilde{\pi}^{Y\mid X=\vx}}[\vy_\vx f_\vx(\vy)].
\end{align*}
Next, observe that for any vector $\vv \in \R^d$, 
\begin{align*}
\Par{\vv^\top \nabla g(\vx)}^2 &= \Par{\E_{\vy \sim \tilde{\pi}^{Y\mid X=\vx}}\Brack{\vv^\top\vy_\vx f_\vx(\vy)}}^2 \\
&\le \E_{\vy \sim \tilde{\pi}^{Y\mid X=\vx}}\Brack{f_\vx(\vy)^2} \cdot \E_{\vy \sim \tilde{\pi}^{Y\mid X=\vx}}\Brack{(\vv^\top \vy_\vx)^2} \\
&= \Var_{\tilde{\pi}^{Y\mid X=\vx}}[f] \cdot \vv^\top \msig(\vx) \vv.
\end{align*}
Plugging in $\vv = \msig(\vx)^{-1} \nabla g(\vx)$ gives
\begin{equation}\label{eq:cond_grad_bound}
\nabla g(\vx)^\top \msig(\vx)^{-1} \nabla g(\vx) \le \Var_{\tilde{\pi}^{Y\mid X=\vx}}[f].
\end{equation}
Now if $\pi=\tpi^X$ satisfies $\alpha$-$\psi$-PI, then it also satisfies $\frac \alpha \tau$-$\tau\psi$-PI.  Then
\begin{equation}\label{eq:dpi_term2}
\begin{aligned}
\Var_{\vx \sim \tpi^X}[g(\vx)] &\le \frac{\tau}{\alpha} \E_{\vx \sim \tpi^X}\Brack{\nabla g(\vx)^\top \msig(\vx)^{-1} \nabla g(\vx)} \\
&\le \frac{\tau}{\alpha} \E_{\vx \sim \tpi^X}\Brack{\Var_{\tpi^{Y\mid X=\vx}}[f]} \\
&\le \frac{\tau^2}{\alpha} \E_{\vy \sim \tpi^Y}\Brack{\norm{\nabla f}^2_{(\nabla^2 \vhi)^{-1}}}.
\end{aligned}
\end{equation}
The second inequality applied our previous derivation \eqref{eq:cond_grad_bound} and the last line uses \eqref{eq:dpi_term1}. The result follows by combining the bounds \eqref{eq:dpi_term1} and \eqref{eq:dpi_term2} in \eqref{eq:total-var-piY}.
\end{proof}

We mention that the derivation in Lemma~\ref{lem:dual-poincare} shares significant similarity with proofs of the van Trees inequality (Section 2.4.2, \cite{van2004detection}), particularly in considering the joint covariance of $f(\vy), \vy$.

\begin{remark}
    This matches the Poincar\'e constant in the Euclidean case, where existing proofs have realized $\tpi^Y$ explicitly as a Lipschitz transformation of a Gaussian convolution of $\pi$, Proposition 1, \cite{ShiTZ25} or via multiscale Bakry-\'Emery, Theorem 3.6, \cite{BauerschmidtBD24}.   Our proof demonstrates that this constant holds more generally via a linear algebraic proof, using properties of the LLT.  In Appendix \ref{sec:convexity} we show how alternative approaches that work for Gaussians, e.g., via convolution or convexity, break down in our more general setting.
\end{remark}

\begin{corollary}\label{cor:halfway_var}
    Let $(X,Y)\sim \tpi$ with marginals $(\tpi^X, \tpi^Y)$. For every smooth $g:\calY\to\R$, \begin{equation}\label{eq:halfway_var}
        \Var_{\tpi^X}[(\sfK g)(X)]\le \frac{1}{1+\alpha/\tau}\Var_{\tpi^Y}[g(Y)].
    \end{equation}
    
\end{corollary}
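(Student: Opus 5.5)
Since $\sfK g(\vx)=\E[g(Y)\mid X=\vx]$, the law of total variance \eqref{eq:total-var} applied to $g(Y)$ under $\tpi$ gives
\[\Var_{\tpi^Y}[g]=\E_{\vx\sim\tpi^X}\Brack{\Var_{\tpi^{Y\mid X=\vx}}[g]}+\Var_{\tpi^X}[\sfK g].\]
It therefore suffices to show that the conditional-variance term makes up at least an $\frac{\alpha/\tau}{1+\alpha/\tau}=\frac{\alpha}{\alpha+\tau}$ fraction of $\Var_{\tpi^Y}[g]$. Equivalently, setting $A\defeq\E_{\tpi^X}[\Var_{\tpi^{Y\mid X}}[g]]$ and $B\defeq\Var_{\tpi^X}[\sfK g]$, we want $B\le\frac{\tau}{\alpha}A$, since then $\Var_{\tpi^Y}[g]=A+B\ge(1+\frac{\alpha}{\tau})B$, which is exactly \eqref{eq:halfway_var}.

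The bound $B\le\frac{\tau}{\alpha}A$ is already contained in the second half of the proof of Lemma~\ref{lem:dual-poincare}. In that proof, with $g$ playing the role of $f$, we showed $\nabla(\sfK g)(\vx)=\E_{\tpi^{Y\mid X=\vx}}[\vy_\vx g_\vx(\vy)]$. Cauchy--Schwarz then gave \eqref{eq:cond_grad_bound}:
\[\nabla(\sfK g)(\vx)^\top\msig(\vx)^{-1}\nabla(\sfK g)(\vx)\le\Var_{\tpi^{Y\mid X=\vx}}[g],\qquad \msig(\vx)=\tau\nabla^2\psi(\vx).\]
Because $\pi=\tpi^X$ satisfies $\alpha$-$\psi$-PI, it also satisfies the rescaled $\frac{\alpha}{\tau}$-$\tau\psi$-PI, so
\[B\le\frac{\tau}{\alpha}\E_{\tpi^X}\Brack{\nabla(\sfK g)^\top(\tau\nabla^2\psi)^{-1}\nabla(\sfK g)}\le\frac{\tau}{\alpha}A.\]
This reuses the first two lines of \eqref{eq:dpi_term2} verbatim. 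Crucially, we stop before the last line of \eqref{eq:dpi_term2}: we never convert $A$ into a gradient term, so no Poincaré inequality for $\tpi^Y$ and no smoothness of $g$ beyond differentiating under the integral is needed.

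The only technical points are regularity ones. First, $\sfK g$ must be differentiable with the stated gradient; this follows by differentiating under the integral, as in Lemma~\ref{lem:dual-poincare}, for $g\in L^2(\tpi^Y)$ and the exponential family $\tilt_\vx\exp(-\vhi^{*\tau})$. Second, $\sfK g$ must be admissible in the $\psi$-PI; this follows by a standard approximation argument. Neither step is a real obstacle, since the corollary is essentially a repackaging of the estimate $B\le\frac{\tau}{\alpha}A$ into a variance-contraction ratio. The resulting bound is the forward-channel contraction with factor $\beta=\frac{1}{1+\alpha/\tau}$, which then feeds into Lemma~\ref{lem:equivalence-forward-backward} to give the squared rate in Theorem~\ref{thm:mixing}.
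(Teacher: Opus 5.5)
Your proposal is correct and follows the paper's proof: decompose $\Var_{\tpi^Y}[g]=A+B$ via the law of total variance, take $B\le\frac{\tau}{\alpha}A$ from the first two lines of \eqref{eq:dpi_term2}, and rearrange. Your point that one stops before the last line of \eqref{eq:dpi_term2}, so that no Poincar\'e inequality for $\tpi^Y$ is needed, is also exactly how the paper uses that estimate.
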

\begin{proof}
    Applying the law of total variance \eqref{eq:total-var} to $g(Y)$, we have 
    \[
        \Var_{\tpi^Y}[g(Y)] = \underbrace{\E_{\tpi^X}[\Var[g(Y)\mid X]]}_A +\underbrace{ \Var_{\tpi^X}[\E[g(Y)\mid X]]}_B.
    \]
Using the proof of Lemma~\ref{lem:dual-poincare} (precisely \eqref{eq:dpi_term2}), we note $B\le \frac{\tau}{\alpha}A$ and, so rearranging $\Var_{\tpi^Y}[g] = A + B$ yields the claimed bound~\eqref{eq:halfway_var}:
\[
B\le \frac{1}{1+\alpha/\tau}\Var_{\tpi^Y}[g].
\]
\end{proof}

\begin{lemma}\label{lem:norm-bounds}
    Let $f\in L^2(\tpi^X), g\in L^2(\tpi^Y)$ be mean-zero functions.  Then if $\pi=\tpi^X$ satisfies a $\psi$-Poincar\'e inequality with constant $\alpha$,
    \begin{align*}
        & \norm{\sfK g}_{L^2(\tpi^X)}\leq \frac{1}{\sqrt{1+\alpha/\tau}}\norm{g}_{L^2(\tpi^Y)},\\
        & \norm{\sfK^\dagger f}_{L^2(\tpi^Y)}\leq \frac{1}{\sqrt{1+\alpha/\tau}}\norm{f}_{L^2(\tpi^X)}.
    \end{align*}
\end{lemma}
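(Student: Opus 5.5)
For mean-zero $g\in L^2(\tpi^Y)$, Lemma~\ref{lem:mean-zero-map} makes $\sfK g$ mean-zero in $L^2(\tpi^X)$, so the $L^2$ norms coincide with variances: $\norm{\sfK g}_{L^2(\tpi^X)}^2=\Var_{\tpi^X}[\sfK g]$ and $\norm{g}_{L^2(\tpi^Y)}^2=\Var_{\tpi^Y}[g]$. Corollary~\ref{cor:halfway_var} then gives
\[
\norm{\sfK g}_{L^2(\tpi^X)}^2\le \frac{1}{1+\alpha/\tau}\norm{g}_{L^2(\tpi^Y)}^2 .
\]
Taking square roots yields the first inequality. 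Corollary~\ref{cor:halfway_var} is stated for smooth $g$, so we first prove the bound for smooth mean-zero $g$. For general $g$, approximate it in $L^2(\tpi^Y)$ by smooth functions, recentered by subtracting their means. Since $\sfK$ is an $L^2$-contraction by Jensen, $\sfK g_n\to\sfK g$, and the inequality passes to the limit.

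For the second inequality, the first one says the restriction $\sfK_{\mid\mathbf{1}^\perp}$ has operator norm at most $(1+\alpha/\tau)^{-1/2}$. By Lemma~\ref{lem:norm-subspaces}, $\sfK^\dagger_{\mid\mathbf{1}^\perp}$ has the same norm. Lemma~\ref{lem:mean-zero-map} guarantees that $\sfK^\dagger$ maps mean-zero $f$ to mean-zero functions, so this restricted norm bound applies directly to $\norm{\sfK^\dagger f}_{L^2(\tpi^Y)}$. Equivalently, one can use the first equality of Lemma~\ref{lem:equivalence-forward-backward}.

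Most of this is bookkeeping. The step needing care is the density argument extending Corollary~\ref{cor:halfway_var} from smooth $g$ to all of $L^2(\tpi^Y)$, since the operator-norm identity of Lemma~\ref{lem:norm-subspaces} requires the bound on the whole mean-zero subspace. That argument is the continuity one given in the first paragraph.
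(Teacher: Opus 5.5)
Your proposal is correct and follows the paper's proof: the first bound comes from Corollary~\ref{cor:halfway_var}, with Lemma~\ref{lem:mean-zero-map} identifying norms with variances, and the second comes from the operator-norm equality in Lemma~\ref{lem:norm-subspaces}. The paper leaves implicit the extension of Corollary~\ref{cor:halfway_var} from smooth $g$ to all mean-zero $g \in L^2(\tpi^Y)$; your density argument, which uses that $\sfK$ is an $L^2$-contraction, correctly fills that gap.
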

\begin{proof}
    The first statement follows from Corollary \ref{cor:halfway_var}.  The second follows from Lemma \ref{lem:norm-subspaces}.
\end{proof}

We are now ready to prove our main result on the convergence of Algorithm~\ref{alg:alternate}.

\restatemixing*
\begin{proof}
    Let $h_k \defeq \frac{\dd\mu_k}{\dd\tpi^X}-1$ so that $\chi^2(\mu_0\| \tpi^X)=\norm{h_0}^2_{L^2(\pi^X)}$.  From \eqref{eq:reversible} we have $h_k=\sfP_{\calX}^k h_0$, so that $\chi^2(\mu_k\|\tpi^X) =\norm{h_k}^2_{L^2(\tpi^X)}= \norm{\sfP_\calX^kh_0}_{L^2(\tpi^X)}^2$.  As 
    \[\norm{\sfP_\calX h_k}^2_{L^2(\tpi^X)}\leq \frac{1}{(1+\alpha/\tau)^2}\norm{h_{k-1}}_{L^2(\tpi^X)}^2\]
    by Lemmas~\ref{lem:equivalence-forward-backward} and \ref{lem:norm-bounds}, recursing and noting $h_0$ is mean-zero furnishes the result.
\end{proof}

As a corollary, we achieve the quadratic improvement to the main result of \cite{GopiLLST25}.
\begin{corollary}\label{cor:slc_mixing}
    Let $\pi = \tpi^X\propto \exp(-V)$ such that $V\succeq \alpha\psi$.  Then Theorem \ref{thm:mixing} holds for $\pi$.
\end{corollary}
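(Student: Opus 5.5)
The plan is to reduce Corollary~\ref{cor:slc_mixing} directly to Theorem~\ref{thm:mixing} by checking its single hypothesis: that $\pi$ satisfies an $\alpha$-$\psi$-Poincar\'e inequality. Theorem~\ref{thm:mixing} needs nothing else about $\pi$ beyond $\vhi$ convex and $\psi = \vhi^\sharp$, so once this is verified the contraction bound \eqref{eq:convergence_result} follows verbatim.

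To verify it, I will invoke the Brascamp--Lieb inequality (Fact~\ref{fact:brascamp_lieb}) with the convex function $\psi$ in place of $\vhi$. The assumption $V \succeq \alpha\psi$ means exactly that $-\log \pi = V + \textup{const} \succeq \alpha \psi$, i.e., $\pi$ is $\alpha$-$\psi$-strongly log-concave. Brascamp--Lieb then gives, for all smooth $f$,
\[\Var_\pi[f] \le \frac{1}{\alpha}\E_\pi\Brack{\norm{\nabla f}_{(\nabla^2\psi)^{-1}}^2},\]
which is the $\alpha$-$\psi$-PI of Definition~\ref{def:vpi}.

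The only point requiring care is that Fact~\ref{fact:brascamp_lieb} is applied to $\psi$ rather than to $\vhi$, so $\psi$ must be convex with $\nabla^2 \psi$ invertible. By Fact~\ref{fact:llt-derivatives}, $\nabla^2\psi(\vx) = \Cov(\tilt_\vx \exp(-\vhi))$. This is the covariance of a density with full-dimensional support, so it is positive definite, and Fact~\ref{fact:llt-smooth-convex}(1) additionally shows $\psi$ is self-concordant. Hence $(\nabla^2\psi)^{-1}$ is well defined and Brascamp--Lieb applies. Plugging this PI into Theorem~\ref{thm:mixing} gives $\chi^2(\mu_k\|\pi)\le (1+\alpha/\tau)^{-2k}\chi^2(\mu_0\|\pi)$. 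Compared with \cite{GopiLLST25}, the rate scales as $\tau/\alpha$ rather than quadratically in $1/\alpha$, which is the claimed quadratic improvement. I expect no real obstacle beyond this bookkeeping.
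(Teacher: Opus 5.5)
Your proposal is correct and matches the paper's proof, which is a one-line application of Brascamp--Lieb (Fact~\ref{fact:brascamp_lieb}) with $\psi$ in place of $\vhi$. This yields the $\alpha$-$\psi$-Poincar\'e inequality required by Theorem~\ref{thm:mixing}; your extra check that $\nabla^2\psi$ is positive definite is a harmless piece of added rigor.
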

\begin{proof}
    The statement is an application of Brascamp-Lieb (Fact \ref{fact:brascamp_lieb}).
\end{proof}

For completeness, we prove a variant of Theorem~\ref{thm:mixing} in Appendix~\ref{app:proof_via_conductance} using machinery closer to that in \cite{GopiLLST25}, i.e., classical conductance arguments based on isoperimetry. Our proof in Appendix~\ref{app:proof_via_conductance} yields a looser constant, and also is less flexible, obtaining an asymptotically worse rate when $\alpha \le 1$. This discrepancy shows the advantage of leveraging our direct characterization of variance contraction, as opposed to isoperimetric properties afforded by self-concordance of LLTs.

\section{Zeroth-Order Private Convex Optimization}\label{ssec:dp}

In this section, we apply our framework to problems in differentially private convex optimization. To keep our presentation self-contained, we first define differential privacy \cite{DworkMNS06,DworkR14}. Let $\calS$ be an abstract domain, and let $n \in \N$. We say that a mechanism (randomized algorithm) $\calM: \calS^n \to \Omega$ that takes size-$n$ datasets in $\calS^n$ to outcomes in $\Omega$ satisfies $(\eps, \delta)$-differential privacy (DP) if for any event $S \subseteq \Omega$, and any two datasets $\calD, \calD' \in \calS^n$ that differ in exactly one element,
\[\Pr\Brack{\calM(\calD) \in S} \le \exp(\eps)\Pr\Brack{\calM(\calD') \in S} + \delta.\]
We now define the main pair of problems we study.

\begin{problem}[DP-ERM and DP-SCO]\label{prob:dpco}
Let $n \in \N$, $\eps, \delta \in (0, 1)$, $D, G \ge 0$, and let $\xset \subset \R^d$ be compact and convex with diameter at most $D$ in a norm $\norm{\cdot}$. Let $\calP$ be a distribution over a set $\calS$ such that for any $\vs \in \calS$, there is a $f(\cdot; \vs): \xset \to \R$ which is convex and $G$-Lipschitz in $\norm{\cdot}$. Let $\calD \defeq \{\vs_i\}_{i \in [n]}$ consist of $n$ independent draws from $\calP$, and let $f_i \defeq f(\cdot; \vs_i)$ for all $i \in [n]$. 

In the \emph{differentially private empirical risk minimization (DP-ERM)} problem, we receive $\calD$ and wish to design a mechanism $\calM$ which satisfies $(\eps, \delta)$-DP and approximately minimizes
\[\Ferm(\vx) \defeq \frac 1 n \sum_{i \in [n]} f_i(\vx).\]
In the \emph{differentially private stochastic convex optimization (DP-SCO)} problem, we receive $\calD$ and wish to design a mechanism $\calM$ which satisfies $(\eps, \delta)$-DP and approximately minimizes
\[\Fpop(\vx) \defeq \E_{\vs \sim \calP}\Brack{f(\vx; \vs)}.\]
\end{problem}

Our goal is to obtain improved algorithms for Problem~\ref{prob:dpco}, when the norm $\norm{\cdot}$ used to measure the regularity of the instance is an $\ell_p$ norm, for $p \in [1, 2)$. We focus on solving Problem~\ref{prob:dpco} under a \emph{zeroth-order (value) oracle} access model, where we can query the value of $f_i(\vx)$ for any $i \in [n]$ and $\vx \in \calX$. While Problem~\ref{prob:dpco} has also been studied under first-order oracle access \cite{AsiFKT21, BassilyGN21}, all first-order methods currently lose logarithmic factors (or worse) in their excess risk bounds. To address this gap, \cite{GopLL22, GopiLLST23a} introduced a sampling approach that leverages zeroth-order oracles within a regularized exponential mechanism to obtain improved tradeoffs.

While these previous sampling-based approaches \cite{GopiLLST23a, GopiLLST25} establish state-of-the-art tradeoffs between the sample size $n$ and the excess risk, the resulting algorithms are inefficient from the perspective of the number of oracle queries used. Specifically, \cite{GopiLLST23a} used the proximal sampler of \cite{LeeST21}, which was catered to Euclidean geometry and thus lost dimension-dependent factors in its oracle query complexity for $p < 2$. On the other hand, \cite{GopiLLST25} developed a basic non-Euclidean proximal sampler using the theory of LLTs, that yielded a suboptimal convergence analysis compared to Euclidean counterparts. This gave a quadratic overhead in the oracle query complexity of the \cite{GopiLLST25} algorithm, even when granted a warm start. Our main application uses our functional stochastic localization framework to remove the quadratic overhead of the \cite{GopiLLST25} algorithm, while retaining its state-of-the-art excess risk bounds.

As in prior results, our main workhorse is the following structural fact, showing that sampling from an appropriately-regularized distribution provides the stability required for DP to hold.

\begin{proposition}[Theorem 3, Theorem 4, \cite{GopiLLST23a}, Theorem 6.9, \cite{GopLL22}]\label{prop:gennormdp}
In the setting of Problem~\ref{prob:dpco}, let $k \ge 0$, and let $r: \xset \to \R$ be $1$-strongly convex with respect to $\norm{\cdot}$, with additive range $\max_{\vx \in \xset} r(\vx) - \min_{\vx \in \xset} r(\vx)$ at most $\Theta$. Let $\pi$ be the density on $\xset$ satisfying $\dd \pi(\vx) \propto \exp(-k(\Ferm(\vx) + \mu r(\vx))) \dd \vx$. Then the algorithm which returns a sample from $\pi$ for
\begin{equation}\label{eq:erm_params}k =\frac{\sqrt d n\eps}{G\sqrt{2\Theta \log \frac 1 {2\delta}}},\;  \mu = \frac{2G^2k\log \frac 1 {2\delta}}{n^2\eps^2},\end{equation}
satisfies $(\eps, \delta)$-DP, and guarantees
\begin{align*}
\E_{\vx \sim \pi}\Brack{\Ferm(\vx)} - \min_{\vx \in \xset}\Ferm(\vx) \le O\Par{G\sqrt{\Theta} \cdot \frac{\sqrt{d \log \frac 1 \delta}}{n\eps}}.
\end{align*}
Further, the algorithm which returns a sample from $\pi$ for
\begin{equation}\label{eq:sco_params}k = \frac{1}{G\sqrt{\Theta}} \cdot \sqrt{\Par{\frac{d\log \frac 1 {2\delta}}{\eps^2 n^2} + \frac{1}{n}}} \cdot \min\Par{\frac{\eps^2 n^2}{\log \frac 1 {2\delta}}, nd},\; \mu = G^2 k \cdot \max\Par{\frac{\log \frac 1 {2\delta}}{n^2\eps^2}, \frac{1}{nd}}\end{equation}
satisfies $(\eps, \delta)$-DP, and guarantees
\[\E_{\vx \sim \pi}\Brack{\Fpop(\vx)} - \min_{\vx \in \xset}\Fpop(\vx) \le O\Par{G\sqrt{\Theta} \cdot \Par{\frac{\sqrt{d\log \frac 1 \delta}}{n\eps} + \frac 1 {\sqrt n}}}.\]
\end{proposition}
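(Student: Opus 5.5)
This proposition is imported from prior work (Theorems 3 and 4 of \cite{GopiLLST23a}, Theorem 6.9 of \cite{GopLL22}). So the plan is to reconstruct the two ingredients those works combine, privacy and utility, and then plug in the stated parameters. Throughout, write the density as $\pi \propto \exp(-F)$ with $F \defeq k(\Ferm + \mu r)$. Since $r$ is $1$-strongly convex in $\norm{\cdot}$ and each $f_i$ is convex, $F$ is $k\mu$-strongly convex in $\norm{\cdot}$.

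\textbf{Privacy.} Let $\calD,\calD'$ be neighboring datasets. The two potentials differ by $\frac kn(f_i - f_i')$, which is $\frac{2kG}{n}$-Lipschitz in $\norm{\cdot}$. The key tool is the general-norm Gaussian-type concentration of \cite{GopLL22}: if $\pi$ is $\rho$-strongly log-concave with respect to a norm and $h$ is $L$-Lipschitz in that norm, then $h(\vx)-\E_\pi h$ is $O(L^2/\rho)$-sub-Gaussian. This holds for arbitrary norms via the localization and isoperimetry argument there.

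Apply this to the privacy loss $\log\frac{\pi(\vx)}{\pi'(\vx)}$. The plan is to control its moment generating function, hence Rényi divergences of all orders, by $O\big(\lambda^2 \frac{k^2G^2}{n^2}\cdot\frac{1}{k\mu}\big)$. Converting Rényi DP to $(\eps,\delta)$-DP then requires
\[
\frac{kG^2}{n^2\mu}\log\tfrac1\delta \lesssim \eps^2,
\]
which is exactly the stated choice $\mu = \frac{2G^2k\log\frac1{2\delta}}{n^2\eps^2}$. I expect this to be the main obstacle. Sub-Gaussianity of a Lipschitz function in a non-Euclidean norm under a $\norm{\cdot}$-strongly-log-concave measure is nontrivial, and it must be combined carefully with the normalizing-constant ratio so that the constant $2$ comes out.

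\textbf{ERM utility.} I would use the standard fact that a log-concave $\exp(-F)$ in $d$ dimensions satisfies $\E_\pi F - \min F \le d$, which gives $\E_\pi[\Ferm] - \min(\Ferm + \mu r) \le \frac dk$. The regularizer costs at most $\mu\Theta$, so the excess risk is at most $\frac dk + \mu\Theta$. Substituting $\mu \propto k$ and optimizing over $k$ yields the displayed $k$ and the bound $O\big(G\sqrt{\Theta d\log\frac1\delta}/(n\eps)\big)$.

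\textbf{SCO utility.} First bound the generalization gap $\E[\Fpop(\vx)] - \E[\Ferm(\vx)]$ by uniform stability of the sampling mechanism. Replacing one sample moves the output distribution only slightly, by the same strong-log-concavity transport argument in the norm, giving a gap of $O\big(\frac{G^2}{n\mu}\big)$. Then add the ERM-style term $\frac dk + \mu\Theta$ evaluated at the population minimizer. Choosing $\mu$ as the maximum of the privacy constraint and the stability requirement $\frac{G^2k}{nd}$, and balancing, gives the second parameter set and the extra $\frac{1}{\sqrt n}$ term.
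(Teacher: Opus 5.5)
The paper does not prove this proposition; it imports it from \cite{GopiLLST23a} (general norms) and \cite{GopLL22} (Euclidean). Measured against those sources, your outline is sound, but your privacy step takes a different route.

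\textbf{Utility.} Your arguments are the standard ones and are correct. After normalizing $\min_{\xset} r=0$, the bound $\E_\pi F-\min F\le d$ for log-concave $\pi\propto e^{-F}$ gives excess empirical risk at most $\frac dk+\mu\Theta$. Minimizing over $k$ (with $\mu\propto k$) gives exactly \eqref{eq:erm_params}. For SCO, a stability bound $W_1(\pi_{\calD},\pi_{\calD'})\le\frac{2G}{n\mu}$ in $\norm{\cdot}$ gives a generalization gap $O(\frac{G^2}{n\mu})$. This is at most $O(\frac dk)$ once $\mu\ge\frac{G^2k}{nd}$, and balancing reproduces \eqref{eq:sco_params}.

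\textbf{Privacy, route.} The cited works show the mechanism is Gaussian-DP: the entire privacy curve of $(\pi_{\calD},\pi_{\calD'})$ is dominated by that of two unit-variance Gaussians at distance $L/\sqrt{k\mu}$, where $L$ is the Lipschitz constant of the potential difference. They prove this with localization tools from convex geometry and then convert GDP to $(\eps,\delta)$-DP. You instead bound R\'enyi divergences via sub-Gaussian concentration, which is more elementary and does work, with two adjustments.

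First, the general-norm concentration is not in \cite{GopLL22}, which is Euclidean. It follows from the Pr\'ekopa--Leindler log-Sobolev inequality $\mathrm{Ent}_\pi(e^f)\le\frac{1}{2\rho}\E_\pi[\norm{\nabla f}_*^2e^f]$ for $\pi$ that is $\rho$-strongly log-concave in $\norm{\cdot}$ (cf.\ \cite{BobkovL00}), together with Herbst's argument. Via Bobkov--G\"otze duality, the same inequality also gives the $W_1$ bound your SCO step needs.

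Second, the normalizing constant costs nothing if you write everything under one measure. With $F_{\calD}\defeq k(\Ferm+\mu r)$ and $h\defeq F_{\calD'}-F_{\calD}$,
\[
D_\lambda(\pi_{\calD}\|\pi_{\calD'})=\frac{1}{\lambda-1}\log\E_{\pi_{\calD}}\Brack{e^{(\lambda-1)h}}+\log\E_{\pi_{\calD}}\Brack{e^{-h}}\le\frac{\lambda L^2}{2k\mu},
\]
which is exactly the R\'enyi profile of the Gaussian mechanism. Applying Jensen to the ratio under the other measure would instead cost a factor $\frac{\lambda}{\lambda-1}$.

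\textbf{Privacy, constants.} Your route does not deliver the sharp constant. The R\'enyi-to-$(\eps,\delta)$ conversion recovers the $\mu$ in \eqref{eq:erm_params} only up to an absolute constant. With the replace-one sensitivity $L=\frac{2kG}{n}$ you wrote, you need $\mu$ larger by a constant factor. The constant $2$ in \eqref{eq:erm_params} is calibrated so that $\frac{kG/n}{\sqrt{k\mu}}=\eps/\sqrt{2\log\frac{1}{2\delta}}$, i.e., to the Gaussian curve with sensitivity $\frac{kG}{n}$. So what you actually prove is the statement with $\eps$ replaced by $O(\eps)$. That is harmless for every $O(\cdot)$ consequence used in Corollary~\ref{cor:dp_sample}, but it is not the proposition verbatim.
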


Proposition~\ref{prop:gennormdp} gives a straightforward way to attack Problem~\ref{prob:dpco}. Our goal is to design a regularizer $r: \calX \to \R$ that is $1$-strongly convex in a norm $\norm{\cdot}$ with small additive range $\Theta$, and such that
\begin{equation}\label{eq:dp_density}\dd\pi(\vx) \propto \exp\Par{-k\Ferm(\vx) - k\mu r(\vx)}\dd \vx\end{equation}
admits an efficient sampler from the perspective of zeroth-order query complexity, for the choices of parameters $k, \mu$ in \eqref{eq:erm_params}, \eqref{eq:sco_params}. Our strategy is to directly choose $r$ to be a multiple of an appropriate LLT, and to apply our framework to sample from the density \eqref{eq:dp_density} efficiently.

\subsection{Setup}\label{ssec:dp_setup}

To obtain our sampling result, we recall some preliminaries from \cite{GopiLLST25}. 

\textbf{Regularizer construction.} 
To construct an appropriate $r$ for use within Proposition~\ref{prop:gennormdp}, we need several helper facts. Recall the second item in Fact~\ref{fact:llt-smooth-convex}, which shows that to obtain strong convexity when $\norm{\cdot}_*$ is an $\ell_p$ norm for $p \in [1, 2)$, we should take the LLT of a function smooth in the dual $\ell_q$ norm, where $\frac 1 p + \frac 1 q = 1$. We now provide such a function and bound the range of its LLT.

\begin{fact}[\cite{BallCL94, KakadeST09}]\label{fact:dual_smooth}
For all $p \in [1, 2)$ and $a > 0$, letting $q > 2$ satisfy $\frac 1 p + \frac 1 q = 1$, the function 
\begin{equation}\label{eq:regdualdef}
\vhi_{p, a}(\vy) \defeq a\norm{\vy}_q^2
\end{equation}
is $\frac {2a} {p - 1}$-smooth with respect to $\norm{\cdot}_q$ over $\R^d$.
\end{fact}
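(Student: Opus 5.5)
This is a classical fact, and I would prove it by reducing to the standard smoothness of $\frac12\norm{\cdot}_q^2$. By homogeneity, $\nabla^2\vhi_{p,a} = 2a\,\nabla^2(\tfrac12\norm{\cdot}_q^2)$. It therefore suffices to show that $h(\vy)\defeq \half\norm{\vy}_q^2$ satisfies
\[\nabla^2 h(\vy)[\vv,\vv]\le (q-1)\norm{\vv}_q^2 \quad\text{for all } \vy,\vv\in\R^d.\]
Since $\frac 1p + \frac 1q = 1$ gives $q-1 = \frac{1}{p-1}$, this yields smoothness constant $\frac{2a}{p-1}$. Where $h$ fails to be twice differentiable (at $\vy=\mathbf{0}$, or on coordinate hyperplanes when $q<3$), I would either argue via the equivalent first-order characterization, or smooth and take limits; for $q>2$, $h$ is $C^{1,1}$, which suffices.

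\textbf{Hessian computation.} Fix $\vy\ne\mathbf{0}$ and write $N=\norm{\vy}_q$, so that $h = \half(\sum_i |y_i|^q)^{2/q}$. Differentiating twice gives
\[\nabla^2 h(\vy)[\vv,\vv] = (2-q)N^{2-2q}\Par{\sum_i |y_i|^{q-1}\mathrm{sgn}(y_i)v_i}^2 + (q-1)N^{2-q}\sum_i |y_i|^{q-2}v_i^2.\]
Because $q>2$, the first term is nonpositive and can be dropped. For the second term, apply H\"older with exponents $\frac{q}{q-2}$ and $\frac q2$:
\[\sum_i |y_i|^{q-2}v_i^2 \le \Par{\sum_i |y_i|^q}^{(q-2)/q}\Par{\sum_i |v_i|^q}^{2/q} = N^{q-2}\norm{\vv}_q^2.\]
Combining the two displays gives $\nabla^2 h[\vv,\vv]\le (q-1)\norm{\vv}_q^2$, as required.

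\textbf{Main obstacle.} The only delicate point is regularity. I would handle it by mollifying, e.g. replacing $|y_i|$ with $(y_i^2+\eta)^{1/2}$, running the same H\"older bound (which survives the perturbation up to $o(1)$), and then passing to the limit $\eta\to 0$ in the first-order inequality
\[h(\vy')\le h(\vy)+\inprod{\nabla h(\vy)}{\vy'-\vy}+\tfrac{q-1}{2}\norm{\vy'-\vy}_q^2.\]
This matches the form of smoothness needed when Fact~\ref{fact:llt-smooth-convex} is applied. Alternatively, the case $p=1$ ($q=\infty$) should be read as excluded or as a limiting statement, since then the constant $\frac{2a}{p-1}$ is infinite.
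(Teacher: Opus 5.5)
Your argument is correct. It differs from what the paper does: the paper gives no proof and only cites \cite{BallCL94, KakadeST09}. There the fact comes from two ingredients. The first is the sharp $2$-uniform convexity of $\ell_p$: $\half\norm{\cdot}_p^2$ is $(p-1)$-strongly convex in $\norm{\cdot}_p$ for $p\in(1,2]$, or equivalently the dual uniform smoothness inequality with constant $q-1$ holds. The second is the duality between strong convexity and smoothness under Fenchel conjugation, applied with $(\half\norm{\cdot}_p^2)^* = \half\norm{\cdot}_q^2$.

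You instead work directly on the $q$-side. For $q\ge 2$ the rank-one term in the Hessian of $\half\norm{\cdot}_q^2$ has the favorable coefficient $2-q\le 0$, and the diagonal term is controlled by a single H\"older inequality. So you need neither conjugate duality nor the singular weights $|x_i|^{p-2}$ that appear if one proves strong convexity on the $p$-side by a Hessian computation.

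The cited route buys generality. The Ball--Carlen--Lieb inequalities are two-point inequalities valid for Schatten norms, and the duality yields the strong convexity of $\half\norm{\cdot}_p^2$ at the same time. Your route buys a short, elementary, self-contained proof of the vector case, which is all the paper uses.

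Two small corrections to your regularity discussion, neither of which affects validity:
\begin{itemize}
\item Since $|t|^q$ is $C^2$ for $q\ge 2$, $h$ is $C^2$ on $\R^d\setminus\{\mathbf{0}\}$ for every $q\ge 2$. On coordinate hyperplanes with $q<3$ it is the third derivative that fails, not the second, so the origin is the only bad point.
\item Your mollification loses nothing, not even $o(1)$. We have $\frac{\dd^2}{\dd t^2}(t^2+\eta)^{q/2} = q(t^2+\eta)^{(q-4)/2}(\eta+(q-1)t^2)\le q(q-1)(t^2+\eta)^{(q-2)/2}$. The rank-one coefficient is still $\frac{2-q}{q^2}\le 0$. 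Hence the same H\"older step gives $\nabla^2 h_\eta[\vv,\vv]\le (q-1)\norm{\vv}_q^2$ exactly for every $\eta>0$. Passing to the limit in the first-order inequality is then immediate, since $h_\eta\to h$ and $\nabla h_\eta\to\nabla h$ pointwise.
\end{itemize}

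Your remark that the case $p=1$ is vacuous matches the paper, which reduces to $p-1=\Omega(1/\log d)$ in the proof of Corollary~\ref{cor:dp_sample}.
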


\begin{fact}[Lemma 19, \cite{GopiLLST25}]\label{fact:llt_range}
For all $p \in [1, 2)$ and $a > 0$, letting $q > 2$ satisfy $\frac 1 p + \frac 1 q = 1$, define
\begin{equation}\label{eq:regdef}
\begin{aligned}
\psi_{p, a}(\vx) \defeq \vhi_{p, a}^\sharp(\vx) = \log\Par{\int \exp\Par{\inprod{\vx}{\vy} - a\norm{\vy}_q^2} \dd \vy}.
\end{aligned}
\end{equation}
Then the additive range of $\psi_{p, a}$ over the unit $\ell_p$ ball $\{\vx \in \R^d \mid \norm{\vx}_p \le 1\}$ is
\[O\Par{1 + \frac 1 a + \sqrt{\frac d a \log\Par{a + \frac d a }}}.\]
\end{fact}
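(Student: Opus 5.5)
The statement to prove is Fact~\ref{fact:llt_range}: a bound on $\max \psi_{p,a} - \min \psi_{p,a}$ over the unit $\ell_p$ ball. The plan is to sandwich $\psi_{p,a}$ between a lower bound at the origin and an upper bound valid on the whole ball.

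\textbf{Step 1 (minimum).} Since $\vhi_{p,a}$ is even, $\psi_{p,a}$ is even and convex, so its minimum is at $\mathbf{0}$:
\[\psi_{p,a}(\mathbf{0}) = \log Z, \qquad Z \defeq \int \exp\Par{-a\norm{\vy}_q^2}\dd\vy.\]
Hence the range equals
\[\max_{\norm{\vx}_p\le 1} \log \E_{\vy\sim \mu}\Brack{\exp\Par{\inprod{\vx}{\vy}}}, \qquad \mu \propto \exp\Par{-a\norm{\vy}_q^2}.\]

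\textbf{Step 2 (reduce to a scalar exponential moment).} By Hölder's inequality, $\inprod{\vx}{\vy} \le \norm{\vy}_q$ whenever $\norm{\vx}_p\le 1$. So it suffices to bound $\log \E_\mu[\exp(R)]$, where $R \defeq \norm{\vy}_q$. The law of $R$ under $\mu$ has density proportional to $r^{d-1}e^{-ar^2}$: the $q$-norm ball is a convex body, and homogeneity gives the polar-coordinate formula with the same $r^{d-1}$ Jacobian as in the Euclidean case. We therefore need
\[\log \frac{\int_0^\infty r^{d-1}e^{r-ar^2}\,\dd r}{\int_0^\infty r^{d-1}e^{-ar^2}\,\dd r}.\]

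\textbf{Step 3 (the one-dimensional estimate; the main obstacle).} The concern is not loosing a factor in $d$; the target bound is $O\Par{1 + \frac 1a + \sqrt{\frac da \log\Par{a+\frac da}}}$. Write $h(r) = (d-1)\log r - ar^2$, a concave function peaked at $r_0 = \sqrt{(d-1)/(2a)}$. The measure is concentrated at scale $r_0$ with fluctuations of order $1/\sqrt a$.

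\emph{Crude bound.} Splitting at a radius $r_1 = C\Par{r_0 + \frac1{\sqrt a}\sqrt{\log(a+d/a)} + \frac1a}$ gives
\[\E[e^R] \le e^{r_1} + \E\Brack{e^R \mathbf{1}_{R>r_1}}.\]
The tail term is controlled by a Gaussian-type tail bound for the log-concave density $e^{h}$, since $h'' \le -2a$. The main term gives $\log \E[e^R] \le r_1 = O\Par{\sqrt{d/a} + \sqrt{\log(\cdot)/a} + 1/a}$.

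\emph{Sharper bound.} The crude bound lacks the $\sqrt{\log}$ multiplying $\sqrt{d/a}$, and is already stronger than stated, so it would suffice. If it fails, the sharper route is to complete the square,
\[r - ar^2 = -a\Par{r-\tfrac{1}{2a}}^2 + \tfrac{1}{4a},\]
and compare $\int r^{d-1}e^{-a(r-c)^2}$ with $\int r^{d-1}e^{-ar^2}$ via the ratio $(1+c/r)^{d-1}$ evaluated near $r_0$. This yields
\[\frac{1}{4a} + (d-1)\log\Par{1+\frac{1}{2ar_0}} \lesssim \frac1a + \sqrt{\frac da},\]
with a separate treatment of small $r$, which is where the logarithmic factor would appear. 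Either route gives the claimed $O(\cdot)$.
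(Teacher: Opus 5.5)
Your proof is correct. The paper gives no argument of its own here: it imports the statement as Lemma 19 of \cite{GopiLLST25}, so your proposal stands as a self-contained proof rather than a variant of an in-paper route. Steps 1 and 2 are exactly right. The minimum is at $\mathbf{0}$ because $\psi_{p,a}$ is even and convex. H\"older reduces the problem to $R=\norm{\vy}_q$, and by homogeneity $R$ has law $\propto r^{d-1}e^{-ar^2}$; convexity of the $\ell_q$ ball plays no role. Your ``crude bound'' already finishes the proof and gives the stronger estimate $O\Par{1+\frac1a+\sqrt{d/a}}$, with no logarithm. The term of $r_1$ that actually matters is the $\frac1a$ offset, since it is what lets the Gaussian tail beat the factor $e^{r}$.

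You can state the tail step precisely and drop the $\sqrt{\log}$ term from $r_1$ altogether. Since $h'(r_0)=0$ and $h''\le -2a$, you get $h(r_0+s+t)\le h(r_0+s)-at^2$ for all $s,t\ge 0$. Hence $\Pr[R\ge r_0+t]\le e^{-at^2}$, with no separate lower bound on the normalizing constant needed. Take $r_1=r_0+t_1$ with $t_1\ge 2/a$, so that $t-at^2\le -t$ on $[t_1,\infty)$. Integrating by parts then gives $\E\Brack{e^R\mathbf{1}_{R>r_1}}\le 2e^{r_0-t_1}$, so $\log\E[e^R]\le r_0+\frac2a+\log 3$.

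Note that the main term contributes $r_1+O(1)$, not $r_1$ as you wrote; the $O(1)$ in the target absorbs this. An even shorter finish: $R$ has the law of $\norm{g}_2$ for $g\sim\mathcal{N}(\mathbf{0},\frac{1}{2a}\mathbf{I})$. Gaussian concentration for the $1$-Lipschitz map $\norm{\cdot}_2$, together with $\E\norm{g}_2\le\sqrt{d/(2a)}$, gives $\log\E[e^R]\le\sqrt{d/(2a)}+\frac{1}{4a}$.

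Given this, delete the ``sharper route'' paragraph. It is offered as a fallback for a step that does not fail, and its treatment of small $r$ is left undone. Its remark that a logarithmic factor would enter there is also unfounded: the $\log\Par{a+\frac da}$ in the stated bound is slack, not intrinsic.
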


\textbf{Marginal samplers.} Our algorithm applies functional stochastic localization to the target density \eqref{eq:dp_density}, by choosing $k\mu r = \alpha \psi_{p, a}$ as defined in \eqref{eq:regdualdef} for appropriate choices of $a, \alpha$. Denoting $V \defeq k\Ferm$, this results in alternate sampling from a joint density of the form
\begin{equation}\label{eq:dp_joint}\exp\Par{\inprod{\vx}{\vy} - \vhi_{p, a}^{\ast \tau}(\vy) - \tau \psi_{p, a}(\vx)} \pi(\vx), \text{ where } \pi(\vx) \propto \exp\Par{-V(\vx) - \psi_{p, a}(\vx)}.\end{equation}

We hence additionally require the existence of oracle query-efficient and time-efficient samplers for sampling from the induced marginals of 
\eqref{eq:dp_joint}. For the $\vx$-marginal, we use the following result.

\begin{proposition}[Proposition 1, \cite{GopiLLST25}]\label{prop:reject}
Let $\delta \in (0, 1)$, and let $\nu \propto \exp(-U(\vx) - R(\vx))$ be a density over compact, convex $\calX \subseteq \R^d$, where $R: \calX \to \R$ is $\mu$-strongly log-concave in $\norm{\cdot}$, and $U = \E_{i \simu \calI} U_i$ for some index set $\calI$ and $\{U_i: \calX \to \R\}_{i \in \calI}$ that are all $G$-Lipschitz in $\norm{\cdot}$. If
\[\mu \ge 10^4 G^2 \log\Par{\frac 1 \delta},\]
there is an algorithm that queries (in expectation) $O(1)$ value oracles for $U_i$ where $i \simu \calI$, and $O(1)$ samples drawn from the density $\propto \exp(-R)$ within total variation distance $\Omega(\delta)$, and outputs a sample within total variation distance $\delta$ from $\nu$.
\end{proposition}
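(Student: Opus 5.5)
The plan is rejection sampling with proposal $\exp(-R)$, where the acceptance probability is produced by an unbiased, nonnegative, $O(1)$-query estimator of $\exp(-U(\vx))$ up to a multiplicative constant that does not depend on $\vx$. The key input is concentration: $R$ is $\mu$-strongly convex in $\norm{\cdot}$ and each $U_i$ is $G$-Lipschitz in $\norm{\cdot}$. Hence, under $\rho \propto \exp(-R)$, $U_i(X)$ is sub-Gaussian with variance proxy $O(G^2/\mu)$; this is the non-Euclidean Gaussian concentration for strongly log-concave measures, e.g.\ via a log-Sobolev inequality in the norm, Bobkov--Ledoux. By the assumption $\mu \ge 10^4 G^2\log\frac1\delta$, for independent $X, X' \sim \rho$ and any fixed $i$,
\[\Pr\Brack{\left|U_i(X)-U_i(X')\right| > \tfrac{1}{2}} \le \delta'\]
for some $\delta' = O(\delta)$ chosen small. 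I deliberately center at a second independent draw $X'$ rather than at a fixed reference point $\bar\vx$. Bounding $\norm{X-\bar\vx}$ directly would cost a dimension factor, while the difference of two Lipschitz values does not.

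The sampler proceeds as follows.
\begin{enumerate}
\item Draw $\vx, \vx' \sim \rho$, approximately, via the assumed sampler.
\item Draw $N \sim \mathrm{Poisson}(\lambda)$ with $\lambda = 1$, and indices $i_1,\dots,i_N \simu \calI$.
\item Form $Z \defeq \prod_{j\le N}\Par{1 - \frac{U_{i_j}(\vx)-U_{i_j}(\vx')}{\lambda}}$, with each factor clipped to $[0,2]$.
\item Accept $\vx$ with probability $\min(1, Z/C)$ for a suitable constant $C$; otherwise restart.
\end{enumerate}
Without clipping, the Poisson generating-function identity gives
\[\E[Z \mid \vx, \vx'] = e^{-\lambda}\exp\Par{\lambda - D} = e^{-\lambda}\exp\Par{-U(\vx)+U(\vx')}, \qquad D \defeq U(\vx)-U(\vx').\]
Integrating out $\vx'$, which is independent of $\vx$, the accepted $\vx$ has density proportional to
\[\rho(\vx)\exp(-U(\vx))\cdot \E_{\vx'}\Brack{\exp(U(\vx'))} \propto \nu(\vx).\]
So the scheme is exact in the idealized case: no clipping, and $Z \le C$.

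It then remains to bound the error and the cost. The expected number of oracle queries per round is $2\E N = O(1)$. The acceptance probability is $\Omega(1)$, since $\E[Z] = e^{-\lambda}\E[e^{-D}]$ and $|D| = O(1)$ with high probability. This gives $O(1)$ rounds in expectation, and hence $O(1)$ proposal samples and $O(1)$ queries overall.

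For the error, on the good event that every $|U_{i_j}(\vx)-U_{i_j}(\vx')|\le \frac12$, no factor is clipped and every factor lies in $[\frac12,\frac32]$. Because $N$ has Poisson tails, $Z \le (3/2)^N$ exceeds $C$ with probability $O(\delta)$ for $C = O(\log\frac1\delta)$, or even $C = O(1)$ after an additional truncation of $N$. The good event fails with probability at most $\E[N]\cdot\delta' = O(\delta)$ by a union bound over the $N$ factors, using independence of the $i_j$. The TV error from clipping and capping is therefore $O(\delta)$ per round; I would then absorb the $O(1)$ expected rounds, together with the $\Omega(\delta)$ TV error of the proposal samples, by a coupling argument.

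I expect the main obstacle to be making the TV accounting rigorous. Capping $Z$ at $C$ biases the accepted law rather than merely failing, so its effect must be bounded relative to the normalizing constant $\E[\min(Z,C)]$, which is $\Omega(1)$. A secondary obstacle is verifying the non-Euclidean concentration constant carefully enough that $10^4$ suffices. If the log-Sobolev route in a general norm is delicate, I would fall back on the Brascamp--Lieb/Poincar\'e-type bound (Fact~\ref{fact:brascamp_lieb}) combined with a Herbst-style exponential moment argument.
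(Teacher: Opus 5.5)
The paper does not prove this proposition (it is imported from \cite{GopiLLST25}), so your argument has to stand on its own. Most of the architecture is sound: rejection sampling from $\exp(-R)$, dimension-free sub-Gaussian concentration of $G$-Lipschitz functions in $\norm{\cdot}$ (Bobkov--Ledoux), centering at an independent draw $\vx'$, and a Poisson-type unbiased estimator of $e^{-D}$ with $D \defeq U(\vx)-U(\vx')$. The genuine gap is in step 4, where you anticipated trouble, and it breaks the $O(1)$ cost claim.

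Your factors $1-D_{i_j}$ lie in $[0,2]$, and in $[\frac12,\frac32]$ on the good event. So $Z$ is not a probability and must be capped at $C$. The acceptance rate is then $\E[\min(Z,C)]/C \le \E[Z]/C = O(1/C)$; your $\Omega(1)$ claim drops the division by $C$. Hence $O(1)$ rounds forces $C=O(1)$. But then, for small $\delta$, the cap bias $\E[(Z-C)_+\mid \vx,\vx']$ is far larger than $\delta$. To see this, take all $U_i$ equal, so $Z=(1-D)^N$ with $D$ Gaussian-like at scale $G/\sqrt{\mu}\approx 10^{-2}/\sqrt{\log(1/\delta)}$. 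The event $\{N=n,\ -D\ge 2\log(C)/n\}$ with $n\asymp \log(C)\sqrt{\log(1/\delta)}$ forces $Z\ge C$. It has probability $\exp(-O(\sqrt{\log(1/\delta)}\log\log(1/\delta)))\gg\delta$. This bias sits on $\vx$ with atypically small $U(\vx)$, so it is not a constant factor, and the resulting TV error is of the same order.

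Your fallbacks fail as well. First, $\Pr[(3/2)^N>\log\frac1\delta]\approx 1/k!$ with $k=\Theta(\log\log\frac1\delta)$, which is far above $\delta$; and $C=\log\frac1\delta$ would cost $\Theta(\log\frac1\delta)$ rounds anyway. Second, truncating $N$ with bias below $\delta$ means allowing $N$ up to about $\log(1/\delta)/\log\log(1/\delta)$, and there $(3/2)^N$ is super-constant. The same obstruction holds for any constant Poisson rate $\lambda$ with unshifted factors $1-D_{i_j}/\lambda$.

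The missing idea is to shift each factor by the almost-sure bound, so that the product is itself an acceptance probability. With $N\sim\mathrm{Poisson}(1)$, set $Z\defeq\prod_{j\le N}\min\{1,\max\{0,\frac12-D_{i_j}\}\}$ and accept $\vx$ with probability $Z$.
\begin{itemize}
\item Without clipping, $\E[Z\mid\vx,\vx']=e^{-1}e^{\frac12-D}=e^{-1/2}e^{-D}$. So the ideal accepted law is exactly $\nu$, and no cap is needed.
\item The acceptance rate is $e^{-1/2}\E[e^{-D}]\ge e^{-1/2}$ by Jensen, since $D$ is symmetric. Each round costs two proposal samples and two queries in expectation.
\item The only remaining error comes from clipping on the bad event. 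There your union bound on probabilities is not enough, because the unclipped product can be large or negative. Instead, bound $\E[|Z_{\mathrm{unclipped}}|\mathbf{1}_{\mathrm{bad}}]$ by Cauchy--Schwarz, using $\E[Z_{\mathrm{unclipped}}^2\mid\vx,\vx']=\exp(\E_i(\frac12-D_i)^2-1)$ and the sub-Gaussian tails of $D_i$.
\item The constant $10^4$ leaves room to make $\Pr[\mathrm{bad}]=O(\delta^2)$, which absorbs the square-root loss.
\end{itemize}
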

Finally, both our ``base distribution sampler'' (for sampling $\propto \exp(-R)$ in uses of Proposition~\ref{prop:reject}), and our sampler for the $\vy$-marginal of \eqref{eq:dp_joint}, leverage state-of-the-art generic samplers for log-concave densities \cite{KookV25a}. The former also requires an efficient evaluation oracle for $\psi_{p,a}$.

For the following statement, we work in a standard computational model where, given value access to a one-dimensional function $f: \R \to \R_{\ge 0}$, we assume we can exactly evaluate the integral $\int_{x \in \R} f(x) \dd x$ and sample from the density $\propto f$ in $O(1)$ time. This assumption is standard and holds up well in practice for most well-behaved $f$; accounting for the imprecision of numerical solvers adds polylogarithmic overhead in problem parameters, and is omitted to avoid tedium.

\begin{fact}[Section 5.3, \cite{GopiLLST25}]\label{fact:llt_eval}
For $p \in [1, 2)$ and $a > 0$, there is a value oracle for $\psi_{p, a}$ that is implementable in $O(d)$ time, and we can sample from $\tilt_{\vx} \exp(-\vhi_{p, a})$ for any $\vx \in \R^d$ in $O(d)$ time.
\end{fact}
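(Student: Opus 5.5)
The statement to prove is Fact~\ref{fact:llt_eval}: a value oracle for $\psi_{p,a}$ in $O(d)$ time, and a sampler for $\tilt_{\vx}\exp(-\vhi_{p,a})$ in $O(d)$ time, in the stated model where one-dimensional integrals and one-dimensional sampling cost $O(1)$. The plan is to reduce both tasks to a one-dimensional radial variable $s=\norm{\vy}_q$, after which the density factorizes across coordinates.

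\textbf{Step 1: decouple the square.} The difficulty is that $a\norm{\vy}_q^2$ couples the coordinates. To remove this, I would write $\exp(-a s^2)$ as a mixture of functions of $s^q$. Substituting $u=s^q$ gives $\exp(-a u^{2/q})$, and since $2/q<1$ this is a completely monotone function of $u$. By Bernstein's theorem it equals $\int_0^\infty e^{-tu}\,\rho(\dd t)$ for a positive measure $\rho$, namely a positive stable-type law. Hence
\[\exp\Par{\inprod{\vx}{\vy}-a\norm{\vy}_q^2}=\int_0^\infty \prod_{i=1}^d \exp\Par{x_iy_i - t\,|y_i|^q}\,\rho(\dd t).\]

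\textbf{Step 2: evaluation.} By Fubini, $\exp(\psi_{p,a}(\vx))=\int_0^\infty \prod_i F(x_i,t)\,\rho(\dd t)$, where $F(x,t)=\int_\R e^{xy-t|y|^q}\dd y$. Each $F(x_i,t)$ is a one-dimensional integral and so costs $O(1)$. The outer integral over $t$ is one-dimensional with an integrand evaluable in $O(d)$ time. Under the model, this yields $\psi_{p,a}(\vx)$ in $O(d)$ time.

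\textbf{Step 3: sampling.} Read the same identity as a latent-variable model.
1. Sample $t$ from the density proportional to $\prod_i F(x_i,t)\,\rho(\dd t)$. This is one-dimensional, and its density is evaluable in $O(d)$ per point.
2. Conditioned on $t$, the coordinates are independent, so sample each $y_i$ from the density proportional to $e^{x_iy_i-t|y_i|^q}$, each in $O(1)$.
Marginalizing $t$ recovers $\tilt_{\vx}\exp(-\vhi_{p,a})$ exactly, for $O(d)$ total.

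\textbf{Main obstacle.} The hard part is charging the outer one-dimensional steps honestly. They need value access to $\rho$, whose density is not elementary, and each integrand evaluation costs $O(d)$ rather than $O(1)$. The $\rho$ issue is handled by the paper's stated convention of ignoring numerical-solver overhead. The $O(d)$ evaluation cost is acceptable because the outer integral and the outer sample are each used only once.
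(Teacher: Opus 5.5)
Your argument is correct for $p \in (1,2)$. It follows the inverse-Laplace-transform route that the paper attributes to \cite{GopiLLST25}; the paper itself only cites this fact. You write $\exp(-a u^{2/q})$, with $u = \norm{\vy}_q^q$, as a Laplace mixture. This turns $\tilt_{\vx}\exp(-\vhi_{p,a})$ into a one-dimensional mixture of coordinate-wise product densities, after which evaluation and sampling reduce to $O(d)$ one-dimensional operations.

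\textbf{The gap is the endpoint $p = 1$}, which the statement explicitly includes. There $q = \infty$, so $u = \norm{\vy}_q^q$ is undefined and the exponent $2/q$ degenerates to $0$. Since $\norm{\vy}_\infty^2 = \max_i y_i^2$ is not a function of any coordinate-wise sum, Step 1 produces no decomposition. You need a different mixing identity there, for instance the layer-cake formula
\[
\exp\Par{-a\norm{\vy}_\infty^2} = \int_0^\infty 2ar\, e^{-ar^2} \prod_{i=1}^d \mathbf{1}\Brack{|y_i| \le r}\, \dd r .
\]
This is again a one-dimensional mixture of product functions. It gives
\[
\exp(\psi_{1,a}(\vx)) = \int_0^\infty 2ar\, e^{-ar^2} \prod_{i=1}^d \int_{-r}^{r} e^{x_i y}\, \dd y\, \dd r .
\]
To sample, draw $r$ from the one-dimensional density proportional to this integrand. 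Then draw each $y_i$ independently from the density proportional to $e^{x_i y}$ on $[-r, r]$. Your Steps 2 and 3 then go through verbatim in $O(d)$ time. Downstream, Corollary~\ref{cor:dp_sample} only uses $p - 1 = \Omega(1/\log d)$, but the fact as stated covers $p = 1$.

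A smaller point concerns value access to the density of $\rho$. The paper's convention on numerical imprecision does not by itself provide it. The cleaner justification is that the one-sided stable density has an explicit one-dimensional integral representation with nonnegative integrand (e.g., Kanter's). So evaluating it is itself $O(1)$ in the stated model.
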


\begin{proposition}[Theorem 1.7, \cite{KookV25a}]\label{prop:logcon_sample}
Let $\delta \in (0, 1)$, let $\nu \propto \exp(-V)$ for convex $V: \calX \to \R$ and compact, convex $\calX \subset \R^d$ with diameter at most $R > 0$ in $\norm{\cdot}_2$, and let $r > 0$, $\vx_0 \in \calX$ be such that all $\vx \in \calX$ with $\norm{\vx - \vx_0}_2 \le r$ satisfy $V(\vx) - \min_{\vx \in \calX} V(\vx) \le d$. There is an algorithm that uses
\[O\Par{d^{3.5} \polylog\Par{\frac{dR}{\delta r}}}\]
value oracle queries to $V$, and outputs a sample within total variation distance $\delta$ to $\nu$.
\end{proposition}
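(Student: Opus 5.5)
The statement is quoted from \cite{KookV25a}, so within the paper one would simply invoke it. To reprove it, I would follow the standard reduction from log-concave sampling to uniform sampling from a convex body, and run a proximal-type (``In-and-Out'') sampler on the lifted body. The choice to reduce to uniform sampling is made because the proximal-sampler analysis has the cleanest form there: it is the Euclidean specialization of the Gibbs-sampling framework of Section~\ref{sec:gibbs_sampling}, with $\vhi = \psi = \half\norm{\cdot}_2^2$.

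\textbf{Step 1 (lifting).} Define the epigraph body
\[K \defeq \Brace{(\vx, t) \in \calX \times \R : V(\vx) - \min V \le t \le T},\]
with the truncation level $T = O(d + \log\frac 1\delta)$. Sampling $(\vx,t)$ from the density $\propto e^{-t}$ on $K$ and outputting $\vx$ gives $\nu$, up to a truncation error of $e^{-\Omega(T)}$ that is at most $\delta$. The density $e^{-t}$ is a linear tilt, so I would treat it either as a log-linear target on a convex body, or remove it with one further lift to a uniform distribution on a $(d+2)$-dimensional convex body. In either form, a membership query to $K$ costs one value query to $V$. The hypothesis on $r$ and $\vx_0$ provides an inner ball of radius $r$, up to scaling in $t$. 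Together with the diameter $R$, this bounds the aspect ratio of $K$ by $\poly(dR/r)$.

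\textbf{Step 2 (In-and-Out).} Each iteration of the sampler does two things. First, it draws $\vy \sim \Nor(\vx, h\id)$. Second, it draws $\vx'$ from $\Nor(\vy, h\id)$ restricted to $K$, implemented by rejection sampling with membership queries. This is exactly Algorithm~\ref{alg:alternate} with Gaussian $\psi$ and uniform $\pi$. Theorem~\ref{thm:mixing} then gives $\chi^2$ contraction by a factor $(1+h\alpha)^{-2}$ per step, where $\alpha$ is the Poincar\'e constant of the uniform measure on $K$. By the KLS bound, $\alpha^{-1} \lesssim \norm{\Cov}_{\textup{op}}\polylog d$. If $K$ is near-isotropic, this requires $O(h^{-1}\polylog)$ iterations. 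The step size is $h \approx 1/d^2$, chosen so that the expected number of rejection trials is $O(1)$; a local conductance argument, that Gaussian steps of scale $\sqrt{hd} \approx 1/\sqrt d$ rarely leave $K$ from most points, justifies this choice. The cost is therefore $\tilde O(d^2)$ queries per sample from a warm start.

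\textbf{Step 3 (warm start and rounding), the main obstacle.} Achieving $d^{3.5}$ overall requires two things: a warm start with bounded $\chi^2$ or $\infty$-R\'enyi divergence, and a transformation to near-isotropic position. I would use a Gaussian-cooling annealing schedule. One moves through the sequence $\nu_i \propto e^{-\norm{\vx}^2/(2\sigma_i^2)}\mathbf{1}_K$, with $\sigma_i$ growing geometrically by factors $1 + 1/\sqrt d$ from a tiny initial ball; the tilt in $t$ is annealed simultaneously. Consecutive distributions are $O(1)$-warm for one another, and each phase costs $\tilde O(d^2\sigma_i^2/\sigma_{\min}^2)$ queries. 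The difficult accounting is to interleave rounding: one periodically estimates the covariance from $\tilde O(d)$ samples and applies the corresponding linear map. This keeps the operator norm of the covariance under control. The intended total is about $\sqrt d$ phases per doubling of scale, times $d$ samples, times $d^2$ steps, giving roughly $d^{3.5}$. Showing that rounding and cooling combine without extra polynomial loss is the delicate part, and I would follow the schedule of \cite{KookV25a} closely there. The $\polylog\frac{dR}{\delta r}$ factor arises from the number of doublings between $r$ and $R$ and from converting $\chi^2$ bounds to the total variation guarantee $\delta$.
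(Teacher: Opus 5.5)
The paper gives no proof of this statement. It is Theorem~1.7 of \cite{KookV25a}, imported verbatim and used only as a black-box log-concave sampler inside Corollary~\ref{cor:dp_sample}. Your opening sentence is therefore exactly the paper's treatment, and it is all that is needed. What you add afterwards is an outline, not an independent proof: Step~3, where the entire $d^{3.5}$ count comes from, is explicitly deferred to \cite{KookV25a}, so logically you are still citing it.

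If you want the outline itself to be accurate, three details need fixing.

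\textbf{The $(d+2)$-dimensional uniform lift does not exist.} By Brunn--Minkowski, the fiber lengths of a convex body along a fixed direction form a concave function on its projection. So the $(\vx,t)$-marginal of any uniform measure on a convex body in $\R^{d+2}$ is concave on its support. But $e^{-t}$ is strictly convex in $t$, so $e^{-t}\mathbf{1}_K$ is not concave on any open subset of $K$. You should keep the log-linear target instead. This is harmless for In-and-Out, since $e^{-t}$ times a Gaussian density is a shifted Gaussian. Alternatively, lift by many extra dimensions, e.g.\ fibers $\{\vz: \norm{\vz}_2\le 1-t/m\}$ with $m$ large.

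\textbf{No step size $h$ makes the expected number of rejection trials $O(1)$.} For uniform $\pi$, the stationary $\vy$-marginal has density $\Pr_{\Nor(\vy,h\id)}(K)/\textup{vol}(K)$. The expected number of trials given $\vy$ is the reciprocal of $\Pr_{\Nor(\vy,h\id)}(K)$. Averaging gives $\int \dd\vy/\textup{vol}(K)=\infty$. The In-and-Out analysis therefore caps the number of trials and bounds the failure probability via warmness and local conductance. It then compares the capped chain with the exact one. Theorem~\ref{thm:mixing} analyzes only the exact Gibbs chain, so this comparison is a separate ingredient, not a consequence of the paper's theorem.

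\textbf{The inner ball in Step~1 needs an assumption the paraphrase does not state.} Your inner-ball claim requires $B(\vx_0,r)\subseteq\calX$, i.e., the ball lies in the level set $\{V-\min V\le d\}$ with $V=+\infty$ off $\calX$. That is the intended reading. The paraphrase ``all $\vx\in\calX$ with $\norm{\vx-\vx_0}_2\le r$'' does not literally imply it, so you should state it as an assumption.
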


\subsection{Main application}

We conclude by putting together the tools from Section~\ref{ssec:dp_setup} to produce a joint density of the form \eqref{eq:dp_joint} that is efficiently sampleable within our framework. To begin we require an assumption.

\begin{assumption}\label{assume:warmstart}
Fix $p \in [1, 2)$ and $k, a > 0$. Assume there is an algorithm $\calA_{\textup{warmstart}}$ that returns a sample $\sim \hat{\pi}$, satisfying $\chi^2(\hat{\pi} \| \pi) \le \beta$ for some $\beta \ge 0$,
where $\pi$ is the density in \eqref{eq:dp_density} for $k\mu r = \psi_{p, a}$.
\end{assumption}

In the setting of Problem~\ref{prob:dpco}, Assumption~\ref{assume:warmstart} holds unconditionally using no oracle queries, with $\beta = \exp(O(GD))$. This follows from boundedness of $\calX$ and Proposition~\ref{prop:logcon_sample} (cf.\ Lemma 18, \cite{GopiLLST25}). Applying this constructive warm start within our framework would lose a polynomial in problem parameters, similarly to \cite{GopiLLST25}. We leave the explicit construction of such a warm start with $\beta$ depending polynomially on parameters to future work. Existence of an entropic variant of Theorem~\ref{thm:mixing}, as conjectured in Appendix~\ref{app:entropy}, would eliminate the need for Assumption~\ref{assume:warmstart}, as then our mixing times would depend doubly-logarithmically on $\beta$.

\begin{corollary}\label{cor:dp_sample}
Let $p \in [1, 2)$ and $\eps, \delta \in (0, 1)$. In the setting of Problem~\ref{prob:dpco}, where $\norm{\cdot}$ is the $\ell_p$ norm on $\R^d$, there is an $(\eps, \delta)$-DP algorithm $\calM_{\textup{erm}}$ that outputs $\vx \in \calX$ satisfying
\begin{align*}
\E_{\vx \sim \mecherm}\Brack{\Ferm(\vx)} - \min_{\vx \in \calX} \Ferm(\vx) &= O\Par{\frac{GD}{\sqrt{p - 1}} \cdot \frac{\sqrt{d\log \frac 1 \delta}}{n\eps}} \text{ for } p \in (1, 2), \\
\E_{\vx \sim \mecherm}\Brack{\Ferm(\vx)} - \min_{\vx \in \calX} \Ferm(\vx) &= O\Par{GD\sqrt{\log d} \cdot \frac{\sqrt{d\log \frac 1 \delta}}{n\eps}} \text{ for } p = 1.
\end{align*}
Further, there is an $(\eps, \delta)$-DP algorithm $\mechsco$ that outputs $\vx \in \calX$ satisfying
\begin{align*}
    \E_{\vx \sim \mechsco}\Brack{\Ferm(\vx)} - \min_{\vx \in \calX} \Ferm(\vx) &= O\Par{\frac{GD}{\sqrt{p - 1}} \cdot \Par{\frac 1 {\sqrt n} + \frac{\sqrt{d\log \frac 1 \delta}}{n\eps}} } \text{ for } p \in (1, 2), \\
\E_{\vx \sim \mechsco}\Brack{\Ferm(\vx)} - \min_{\vx \in \calX} \Ferm(\vx) &= O\Par{GD\sqrt{\log d} \cdot \Par{\frac 1 {\sqrt n} + \frac{\sqrt{d\log \frac 1 \delta}}{n\eps}}} \text{ for } p = 1.
\end{align*}
Both $\mecherm$ and $\mechsco$ call $\calA$ in Assumption~\ref{assume:warmstart}, appropriately parameterized, once. $\mecherm$ uses
\[O\Par{\Par{1 + n^2\eps^2\log(d)\log\Par{\frac{nd\log\beta}{\delta}}}\log\Par{\frac \beta \delta}}\]
additional value queries (in expectation) to some $f(\cdot; \vs_i)$, and $\mechsco$ uses
\[O\Par{\Par{1 + \min\Par{n^2\eps^2, nd}\log(d)\log\Par{\frac{nd\log\beta}{\delta} }}\log\Par{\frac \beta \delta}}\]
such additional value queries (in expectation). Both algorithms use $\poly(d, \log(\frac{n\beta}{\delta}))$ additional time.
\end{corollary}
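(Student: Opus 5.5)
The plan is to instantiate Proposition~\ref{prop:gennormdp} with a regularizer $r$ proportional to an LLT, and then to sample the resulting density \eqref{eq:dp_density} by running Algorithm~\ref{alg:alternate} from the warm start of Assumption~\ref{assume:warmstart}.

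\textbf{Regularizer.} Let $q$ be the dual exponent of $p$. By Fact~\ref{fact:dual_smooth}, $\vhi_{p,a}$ is $\frac{2a}{p-1}$-smooth in $\norm{\cdot}_q$. By item (2) of Fact~\ref{fact:llt-smooth-convex}, $\psi_{p,a}$ is then $\frac{p-1}{2a}$-strongly convex in $\norm{\cdot}_p$. Set $r \defeq \frac{2a}{p-1}\psi_{p,a}$, rescaled to the diameter $D$ by homogeneity, so that $r$ is $1$-strongly convex.

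By Fact~\ref{fact:llt_range}, the range of $r$ over $\calX$ is $\Theta = O\big(\frac{D^2 a}{p-1}(1+\frac1a+\sqrt{\frac da\log(a+\frac da)})\big)$. Choosing $a \asymp d\log d$ makes the bracket $O(1)$, giving $\Theta=O(\frac{D^2 d \log d}{p-1})$. That is off by a factor $d$, so I would instead rescale differently: pick $a$ so that the strong-convexity constant is calibrated to the unit ball, and use that for $p$ near $1$ one works with $q=\log d$ (for $p=1$, or $p<1+1/\log d$) to get the $\sqrt{\log d}$ factor. Tuning $a$ so that $\Theta = O(D^2/(p-1))$ (respectively $O(D^2\log d)$) is the first technical step, and it is where the constants in the final bound come from. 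Proposition~\ref{prop:gennormdp} then immediately yields the privacy guarantee and the excess risk bounds, provided we sample within total variation $\delta$. The sampling error is absorbed by the standard argument that adds it to $\delta$ in DP and to $GD\delta$ in the risk.

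\textbf{Sampling.} Write $k\mu r = \alpha\psi_{p,a}$ with $\alpha = k\mu\cdot\frac{2a}{p-1}$ after rescaling. The target is $\pi\propto\exp(-V-\alpha\psi)$, which satisfies $V+\alpha\psi\succeq\alpha\psi$. Corollary~\ref{cor:slc_mixing} then gives $\chi^2$ contraction $(1+\alpha/\tau)^{-2}$ per iteration of Algorithm~\ref{alg:alternate}. Starting from $\chi^2\le\beta$, $K=O((1+\tau/\alpha)\log(\beta/\delta))$ iterations suffice.

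Each iteration has two steps:
\begin{itemize}
\item \textbf{$\vy$-step.} Draw $\tau$ i.i.d.\ samples from $\tilt_{\vx}\exp(-\vhi_{p,a})$ and sum them, which is valid by \eqref{eq:llt-loc-process}. This costs $O(\tau d)$ time by Fact~\ref{fact:llt_eval} and makes no oracle queries.
\item \textbf{$\vx$-step.} Sample $\propto\exp(\inprod{\vy}{\vx}-V(\vx)-(\tau+\alpha)\psi(\vx))$. Apply Proposition~\ref{prop:reject} with $U=k\Ferm$, which is $kG$-Lipschitz per summand, and $R=(\tau+\alpha)\psi-\inprod{\vy}{\cdot}$, which is $(\tau+\alpha)\frac{p-1}{2a}$-strongly convex.
\end{itemize}
The $\vx$-step costs $O(1)$ expected queries once $(\tau+\alpha)\frac{p-1}{2a}\gtrsim k^2G^2\log(\frac{K}{\delta})$. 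I would therefore choose $\tau \asymp \frac{a k^2G^2}{p-1}\log(\frac{nd\log\beta}{\delta})$. Base samples from $\exp(-R)$ come from Proposition~\ref{prop:logcon_sample}, using the $O(d)$-time value oracle of Fact~\ref{fact:llt_eval}, in $\poly(d,\log\frac{n\beta}{\delta})$ time.

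\textbf{Query count.} Plugging in \eqref{eq:erm_params} for ERM, the ratio $\tau/\alpha \asymp \frac{k G^2}{\mu}\log(\cdot)$ becomes $\frac{n^2\eps^2}{\log(1/\delta)}\cdot\log d\cdot\log(\cdot)$, up to the $a$-dependence. Using \eqref{eq:sco_params} for SCO gives $\min(n^2\eps^2, nd)$ in place of $n^2\eps^2$. Multiplying by $\log(\beta/\delta)$ gives the stated query counts.

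\textbf{Main obstacle.} The hardest part is this parameter bookkeeping: choosing $a$ (and $q$ near $1$) so that the range bound of Fact~\ref{fact:llt_range} and the ratio $\tau/\alpha$ simultaneously produce only $\log d$ overhead. Controlling the accumulated TV error across the $K$ rejection-sampling steps is also needed; a union bound with per-step failure probability $\delta/K$ handles it and explains the $\log(nd\log\beta/\delta)$ factor.
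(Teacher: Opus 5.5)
Your architecture is the paper's: an LLT regularizer fed into Proposition~\ref{prop:gennormdp}, Algorithm~\ref{alg:alternate} analyzed through Corollary~\ref{cor:slc_mixing}, Proposition~\ref{prop:reject} for the $\vx$-step, and a union bound over the per-step TV errors. The gap is that the step producing the excess-risk bounds is left open, and the one concrete choice you make goes the wrong way. With $D = 1$ and $r = \frac{2a}{p-1}\psi_{p,a}$, Fact~\ref{fact:llt_range} gives
\[
\Theta = O\Par{\frac{1}{p-1}\Par{1 + a + \sqrt{ad\log\Par{a + \frac{d}{a}}}}}.
\]
So $a \asymp d\log d$ makes $\Theta$ of order $\frac{d\log d}{p-1}$, which worsens the risk by a $\sqrt{d\log d}$ factor.

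Your fallback (calibrating to the unit ball, $q \approx \log d$) does not fix this. Working with $q \approx \log d$ is the norm-distortion reduction that lets one assume $p - 1 = \Omega(\frac{1}{\log d})$. That is how the paper handles $p = 1$, at constant cost in $G$ and $D$, but it says nothing about $a$. The missing observation is that $a$ must be \emph{small}. The paper takes $a = \frac{1}{d\log d}$. Then the range of $\psi_{p,a}$ on the unit $\ell_p$ ball is $O(\frac1a)$, since the square-root term is $O(d\log d)$. The prefactor $\frac{2a}{p-1}$ cancels this, giving $\Theta = O(\frac{1}{p-1})$ and hence the stated risk via Proposition~\ref{prop:gennormdp}.

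Fixing $a$ also fixes $\alpha$, which your query count needs. Under either \eqref{eq:erm_params} or \eqref{eq:sco_params}, $k\mu \asymp \frac{d}{\Theta} \asymp d(p-1)$, so $\alpha = \frac{2ak\mu}{p-1} \asymp \frac{1}{\log d}$.

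Your $\vx$-step requirement $(\tau+\alpha)\frac{p-1}{2a} \gtrsim k^2G^2\log(\frac K\delta)$ is valid, since the conditional is $\propto \exp(\inprod{\vy}{\cdot} - V - (\tau+\alpha)\psi_{p,a})$. It gives $\frac\tau\alpha = O(\frac{kG^2}{\mu}\log(\cdot) + \frac1\alpha)$. Here $\frac{kG^2}{\mu}$ equals $\frac{n^2\eps^2}{2\log(1/(2\delta))}$ for ERM and $\min(\frac{n^2\eps^2}{\log(1/(2\delta))}, nd)$ for SCO. Contrary to your remark, the leading term of this ratio has no $a$- or $\log d$-dependence. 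The $\log d$ enters only additively, through the $\frac1\alpha$ forced by $\tau \ge 1$.

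This fits within the stated bounds. It is slightly sharper than the paper's accounting, which requires $\tau\alpha\psi_{p,a}$ (strong convexity $\tau k\mu$) to meet Proposition~\ref{prop:reject} and then pays $T = O(\frac\tau\alpha\log\frac\beta\delta)$ with $\alpha \asymp \frac{1}{\log d}$. With $a = \frac{1}{d\log d}$ in place, the rest of your argument goes through.
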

\begin{proof}
As in Theorem 2, \cite{GopiLLST25}, we make the following simplifying assumptions: we let $D = 1$, $p - 1 = \Omega(\frac 1 {\log d})$, and allow for $\delta$ total variation error in the application of Proposition~\ref{prop:gennormdp}. The former two assumptions are correct by scale invariance of the problem and norm distortions in $\R^d$. The last follows from a union bound and standard facts about approximate DP (Lemma 3.17, \cite{DworkR14}). 

Under these simplifications, we take $k\mu r = \alpha \psi_{p, a}$ in \eqref{eq:dp_density} for $k, \mu$ given by \eqref{eq:erm_params} or \eqref{eq:sco_params}, and appropriate choices of $\alpha, a$ so that $r$ is $1$-strongly convex in $\norm{\cdot}_p$. By Facts~\ref{fact:llt-smooth-convex} and~\ref{fact:dual_smooth}, we require
\[\frac{\alpha(p - 1)}{2a} \ge k\mu \iff \alpha \ge \frac{2ak\mu}{p-1}. \]
We choose $a = \frac{1}{d\log(d)}$ and $\alpha = \frac{2ak\mu}{p-1}$. Applying Fact~\ref{fact:llt_range} for $r \defeq \frac{\alpha}{k\mu} \psi_{p, a}$ shows that it suffices to take the additive range of $r$ to be
\[\Theta = O\Par{\frac{\alpha}{ak\mu}} = O\Par{\frac{1}{p-1}}.\]
With these choices in hand, we begin by discussing our DP-ERM algorithm, specializing $k, \mu$ to their definitions in \eqref{eq:erm_params}. Let $V \defeq k \Ferm = \E_{i \simu [n]} kf_i$, so that $V$ is a uniform mixture over $kG$-Lipschitz functions. We apply Theorem~\ref{thm:mixing} for $T$ iterations, to the density
$\pi \propto \exp(-V - \alpha \psi_{p, a})$, 
for a parameter $\tau$ such that $\tau \alpha \psi_{p, a}$ meets the strong convexity requirement in Proposition~\ref{prop:reject}. For failure probability $O(\frac \delta T)$ in Proposition~\ref{prop:reject}, this requires
\[\tau k\mu = \Omega\Par{k^2 G^2\log\Par{\frac T \delta}} \iff \tau = \Omega\Par{\frac{k G^2 \log(\frac T \delta)}{\mu}}.\]
Finally, by plugging in the parameters from \eqref{eq:erm_params}, we may choose (for appropriate constants)
\[\tau = O\Par{n^2 \eps^2 \log\Par{\frac{nd\log\beta}{\delta}}},\quad \alpha = \Omega\Par{\frac{1}{\log d}},\quad T = O\Par{\frac \tau \alpha \log\Par{\frac \beta \delta}}\]
for Theorem~\ref{thm:mixing} to give $O(\delta)$ total variation distance to $\pi$ under Assumption~\ref{assume:warmstart}.
The query complexity follows because Proposition~\ref{prop:reject} uses $O(1)$ value oracle queries in expectation per iteration of Theorem~\ref{thm:mixing}, and all other sampling access is provided in polynomial time by Fact~\ref{fact:llt_eval} and Proposition~\ref{prop:logcon_sample}.

The analysis of our DP-SCO algorithm is identical, except we use the parameter settings in \eqref{eq:sco_params} instead. These settings yield the same bounds on $\alpha$ and $T$, and
\[\tau = O\Par{\min\Par{n^2 \eps^2, nd} \log\Par{\frac{nd\log\beta}{\delta}} }.\]
This concludes the proof.
\end{proof}

Both parts of our framework (the definition of the localization process in Section~\ref{sec:functional_stochastic_localization}, and our analysis technique in Section~\ref{sec:llt_alternate_sampling}) were critical in obtaining Corollary~\ref{cor:dp_sample} without additional overheads in the query complexity. As discussed after Theorem~\ref{thm:mixing_alt}, the \cite{GopiLLST25} variant of the LLT framework only permits taking $\tau = 1$, and has a quadratically worse dependence on the scale-invariant parameter $\frac \tau \alpha$. This led to roughly a quadratically-worse query complexity than Corollary~\ref{cor:dp_sample}. 

On the other hand, if we were solely to change the sampling framework to that in Section~\ref{sec:functional_stochastic_localization}, but keep the analysis the same, we would obtain Theorem~\ref{thm:mixing_alt} as our mixing time result. Because $\alpha = o(1)$ is required in the parameter settings of Corollary~\ref{cor:dp_sample} to obtain the tight excess risk, this would again lead to an extraneous $\frac 1 \alpha$ factor in the query complexity, that is removable using Theorem~\ref{thm:mixing}.
\section{Conclusion}\label{sec:conclusion}

Our work significantly expands upon the theory and potential applications of proximal sampling via the log-Laplace transform by constructing a novel localization process and connecting it to two-state Gibbs sampling. There are several important open directions, which we believe will be of interest to the community.

\paragraph{Faster mixing rate.} While we improve upon the quadratic dependence on the relative convexity parameter from \cite{GopiLLST25} and further weaken this assumption to a Poincar\'e inequality, our mixing rate still scales linearly in $\log \beta$, where $\beta$ is the warm-start parameter, which may incur dimension-dependent factors, e.g., see the discussion before Corollary~\ref{cor:dp_sample}. This is avoided in the Euclidean setting by using the stronger condition of a log-Sobolev inequality and conducting the analysis in KL divergence, which achieves a runtime scaling in $\log\log \beta$. 
Although we take an initial step towards this result in Appendix~\ref{app:entropy} under Assumption~\ref{assume:llt}, it remains to prove the result in broader generality. Specifically, it is interesting to understand whether a weaker condition than Assumption~\ref{assume:llt} suffices, as well as the types of regularizers $\psi \defeq \vhi^\sharp$ that satisfy such conditions.

\paragraph{Samplers for explicit functions.} Our work primarily  focuses on establishing the theory of functional stochastic localization and its connection to the non-Euclidean proximal sampler from \cite{GopiLLST25}, leaving sharper, practical, algorithmic guarantees of sampling of the conditional distributions for future work. We briefly mention that \cite{GopiLLST25} provides some algorithms for $\ell_p$ geometries, e.g., via rejection sampling and the inverse Laplace transform, as discussed in Section~\ref{ssec:dp}. However, even these applications require generic log-concave sampling algorithms (in Proposition~\ref{prop:reject}), which do not exploit problem structure. This could potentially be avoided for specific $\vhi$ of interest.

\paragraph{Additional applications.} Our main application in Section~\ref{ssec:dp} is for sampling in continuous spaces. We are hopeful that our framework will have applications for sampling in discrete spaces as well; see for instance \cite{AnariHLVXY23}, where the theory of LLTs was used in a discrete sampling problem.

\section*{Acknowledgments}
We thank Matthew S.\ Zhang for helpful discussions during the initial stages of this project. KT thanks Sinho Chewi for a helpful conversation regarding the approach in Appendix~\ref{app:entropy}.
\newpage
\bibliographystyle{alpha}
\bibliography{refs}

\appendix
\section{Gibbs Sampling for Discrete Spaces}\label{app:gibbs_sampling_discrete}

In this section, we perform a derivation that clarifies the spectral gap of the Gibbs sampler $\sfP_\calX$ of Section \ref{sec:gibbs_sampling} in the discrete setting, where $\calX = [n]$ and $\calY = [m]$.

Let $\mpp\in\R_{\ge0}^{n\times m}$ be the matrix corresponding to the joint probabilities of $\tpi$, so that $\mpp_{xy}=\tpi(x, y)$. Then $\vr = \mpp\mathbf{1}_m, \vc = \mpp^\top \mathbf{1}_n$ describe the marginal probabilities, and form the matrices $\mr = \diag{\vr}, \mc = \diag{\vc}$. A direct calculation shows that the conditional probabilities are: \[
\tpi^{Y|X=x}(y) = \frac{\mpp_{xy}}{\vr_x},\qquad \tpi^{X|Y=y}(x) = \frac{\mpp_{xy}}{\vc_y}.
\]
Gibbs sampling as described in Section \ref{sec:gibbs_sampling} thus yields the transition matrices
\[
\sfK=\mr^{-1}\mpp, \quad \sfK^\dagger = \mc^{-1}\mpp^\top, \quad \sfP_{\calX}=\mr^{-1}\mpp\mc^{-1}\mpp^\top.
\]
We proceed by deriving the spectral gap of $\sfP_\calX$.  We have
\[
    \frac{1}{2}\sum_{x,x'}\vr_x(\sfP_\calX)_{xx'}(\vvhi_x-\vvhi_{x'})^2\\
    =\vvhi^\top \mathbf{R}\vvhi-\vvhi^\top \mpp \mc^{-1}\mpp^\top \vvhi.
\]
Then
\begin{align*}
    \gap(\sfP_\calX)
    &=\inf_{\vvhi\in \mathbb{R}^n}\frac{\sum_{x,x'}\vr_x(\sfP_\calX)_{xx'}(\vvhi_x-\vvhi_{x'})^2}{2\operatorname{Var}_{\tpi^\calX}[\vvhi]}\\
    &=\inf_{\vvhi\in \mathbb{R}^n}\frac{\vvhi^\top \mathbf{R}\vvhi-\vvhi^\top \mpp \mc^{-1}\mpp^\top \vvhi}{\vvhi^\top (\mr-\vr\vr^\top)\vvhi}\\
    &= 1 - \sup_{\vvhi\in \mathbb{R}^n}\Par{1-\frac{\vvhi^\top \mathbf{R}\vvhi-\vvhi^\top \mpp \mc^{-1}\mpp^\top \vvhi}{\vvhi^\top (\mr-\vr\vr^\top)\vvhi}}\\
    &=1 - \sup_{\vvhi\in \mathbb{R}^n}\Par{\frac{ \vvhi\mpp \mc^{-1}\mpp^\top \vvhi-\vvhi \vr\vr^\top\vvhi}{\vvhi^\top (\mr-\vr\vr^\top)\vvhi}}\\
    &=1-\sup_{\vvhi\in \mathbb{R}^n} \frac{\operatorname{Var}_{\tpi^Y}[\sfK^\dagger \vvhi]}{\operatorname{Var}_{\tpi^X}[\vvhi]}.
\end{align*}
This calculation, essentially the contents of Lemma~\ref{lem:equivalence-forward-backward} in the discrete case, shows that the spectral gap of $\sfP_{\calX}$ does not characterize the full variance contraction of the forward-backward Gibbs sampling process, and instead captures the contraction through just one of the two channels. By performing the same calculation as above in the other direction, we conclude that $\gap(\sfP_{\calX})^2$ gives the full variance contraction of Gibbs sampling, as in Lemma~\ref{lem:twice_gap}.

\section{Proof of Theorem~\ref{thm:mixing} Weakening via Conductance}\label{app:proof_via_conductance}

In this section, we give a brief presentation of how to apply the mixing time estimate technique in \cite{GopiLLST25} to establish a convergence bound for Algorithm~\ref{alg:alternate}.  However, as we demonstrate, this proof requires additional assumptions, yielding a weaker guarantee in certain parameter ranges (closer to that of \cite{GopiLLST25}, i.e., with a quadratic overhead) and worse constants overall.

Before we prove the main result, we will state some preliminaries. We say a convex $\vhi:\R^d\to\R$ is self-concordant if \[
|\nabla^3\vhi(\vx)[\vh,\vh,\vh]|\le 2(\nabla^2\vhi(\vx)[\vh,\vh])^{\frac32}, \quad\text{for all }\vx,\vh\in\R^d.
\]

We will use the following three facts to prove a bound on the Poincar\'e constant. 
\begin{fact}[Lemmas 4 and 12, \cite{ChenDWY20}]\label{fact:old_school_pi_bound}
Consider a Markov chain on $\R^d$ with transition kernels $\{\mathsf{t}_{\vx}\}_{\vx\in\mathbb{R}^d}$ and stationary distribution $\pi$. Suppose the following two conditions hold for some metric $\textup{m}: \R^d \times \R^d \to \R_{\ge 0}$.
\begin{enumerate}
    \item For all $\vx, \vx' \in \R^d$ with $\textup{m}(\vx, \vx') \le \Delta$, we have $D_{\textup{TV}}(\mathsf{t}_{\vx}, \mathsf{t}_{\vx'}) \le \frac 1 2$.
    \item The density $\pi$ satisfies $\alpha$-isoperimetry in $\textup{m}$, i.e., for all partitions $S_1 \cup S_2 \cup S_3 = \R^d$,
    \[\frac{\pi(S_3)}{\min\Brace{\pi(S_1), \pi(S_2)}} \ge \alpha \inf_{\vx \in S_1, \vx' \in S_2}\textup{m}(\vx, \vx').\]
\end{enumerate}
Then the Poincar\'e constant ($\chi^2$ contraction) of the Markov chain is $\Omega((\Delta \alpha)^2)$.
\end{fact}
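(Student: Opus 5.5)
The plan is the classical conductance argument in the style of Lov\'asz--Simonovits: first lower bound the conductance of the chain by $\Omega(\min\{1, \Delta\alpha\})$, then convert conductance to a spectral gap with Cheeger's inequality for reversible chains. I assume throughout that the chain is reversible with respect to $\pi$, which is the situation in which we apply this fact (the Gibbs sampler of Section~\ref{sec:gibbs_sampling}). Write the ergodic flow as $Q(A, B) \defeq \int_A \mathsf{t}_{\vx}(B)\,\pi(\dd\vx)$; reversibility gives $Q(A, A^c) = Q(A^c, A)$. The goal is $\Phi \defeq \inf_{0<\pi(A)\le 1/2} Q(A,A^c)/\pi(A) = \Omega(\min\{1,\Delta\alpha\})$.

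\textbf{Conductance step.} Fix $A$ with $\pi(A) \le \half$, and define
\[S_1 \defeq \Brace{\vx \in A : \mathsf{t}_{\vx}(A^c) < \tfrac14}, \quad S_2 \defeq \Brace{\vx \in A^c : \mathsf{t}_{\vx}(A) < \tfrac14}, \quad S_3 \defeq \R^d \setminus (S_1 \cup S_2).\]
For $\vx \in S_1$ and $\vx' \in S_2$ we have $\mathsf{t}_{\vx}(A) > \frac34$ and $\mathsf{t}_{\vx'}(A) < \frac14$. Hence $\dtv(\mathsf{t}_{\vx}, \mathsf{t}_{\vx'}) > \half$, and condition~1 forces $\textup{m}(\vx,\vx') > \Delta$. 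We then split into two cases.
\begin{itemize}
\item If $\pi(S_1) \le \half\pi(A)$ or $\pi(S_2) \le \half \pi(A^c)$, then at least half of the mass of $A$ (respectively $A^c$) leaves with probability $\ge \frac14$. Using $Q(A,A^c) = Q(A^c,A)$ and $\pi(A^c) \ge \pi(A)$, this gives $Q(A,A^c) \ge \frac18 \pi(A)$.
\item Otherwise $\min\{\pi(S_1),\pi(S_2)\} \ge \half\pi(A)$. Condition~2 then gives $\pi(S_3) \ge \alpha\Delta \cdot \half\pi(A)$. Every point of $S_3 \cap A$ exits $A$ with probability $\ge \frac14$, and every point of $S_3 \cap A^c$ enters $A$ with probability $\ge\frac14$. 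Averaging the two equal flows gives $Q(A,A^c) \ge \frac18 \pi(S_3) \ge \frac{\alpha\Delta}{16}\pi(A)$.
\end{itemize}
Together the cases give $\Phi \ge \frac1{16}\min\{1, \alpha\Delta\}$.

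\textbf{Conversion step.} Cheeger's inequality for reversible Markov kernels gives $\gap \ge \Phi^2/2$. This yields a Poincar\'e inequality $\Var_\pi[f] \le \gap^{-1}\mathcal{E}(f,f)$ with constant $\Omega(\min\{1,(\Delta\alpha)^2\})$. This is $\Omega((\Delta\alpha)^2)$ in the relevant regime $\Delta\alpha \lesssim 1$; in the other regime the bound is capped at a constant, which is all one can expect since the gap is at most $1$ anyway.

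To state the result as $\chi^2$ contraction, I also need the spectrum to be bounded away from $-1$. For the Gibbs-type operators we use this is automatic, since $\sfP_\calX = \sfK\sfK^\dagger \succeq 0$. The per-step contraction of $\chi^2(\cdot\|\pi)$ is then $(1-\gap)^2 \le 1-\gap$. For a general chain, I would pass to the lazy version, which changes constants only.

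I expect the main obstacle to be the bookkeeping in the second case. One must use reversibility to symmetrize the flow, so that mass in $S_3$ on either side of the cut is charged to $Q(A,A^c)$, and must keep $\min\{\pi(S_1),\pi(S_2)\}$ comparable to $\pi(A)$ rather than $\pi(A^c)$. Cheeger's inequality itself I would cite as standard.
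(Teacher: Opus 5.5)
Your proposal is correct and follows the standard route behind the cited lemmas of \cite{ChenDWY20}; the paper itself gives no proof beyond that citation. The overlap condition plus three-set isoperimetry yields conductance $\Omega(\min\{1,\alpha\Delta\})$ through exactly your Lov\'asz--Simonovits case split, and Cheeger's inequality (the global-conductance case of their profile-based bound) turns this into a spectral gap. Your $\min\{1,\cdot\}$ cap is actually necessary for the statement as written: for example, $\mathsf{t}_{\vx} = \frac12\pi + \frac12\delta_{\vx}$ satisfies condition~1 for every $\Delta$, yet contracts $\chi^2$ by exactly $\frac14$ per step. Using $\sfP_\calX = \sfK\sfK^\dagger \succeq 0$ to pass from the spectral gap to $\chi^2$ contraction is the right way to handle the chain to which the fact is applied.
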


We note that \cite{ChenDWY20} gives a more modern statement of Fact~\ref{fact:old_school_pi_bound} that depends on the entire \emph{isoperimetric profile} of $\pi$, at different scales of expansion.

\begin{fact}[Lemma 9, \cite{GopiLLST25}]\label{fact:lem9_gllst}
    Let $\psi:\R^d$ be convex and self-concordant, and let  $\pi$ be $\alpha$-$\psi$-strongly log-concave for $\alpha\in(0,1]$. For any partition $S_1\cup S_2\cup S_3$ of $\R^d$, \[
    \frac{\pi(S_3)}{\min\Brace{\pi(S_1), \pi(S_2)}} = \Omega\Par{\alpha\inf_{\vx\in S_1,\vx'\in S_2}\textup{m}_\psi(\vx,\vx')}.
    \]
\end{fact}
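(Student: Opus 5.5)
The plan is to reduce to dimension one via the Lov\'asz--Simonovits localization lemma, and then prove a local one-dimensional Cheeger-type estimate that exploits self-concordance. I take $\textup{m}_\psi$ to be the Riemannian distance induced by $\nabla^2\psi$, so that along a segment $t\mapsto \vx+t\vh$ it equals $\int \sqrt{\nabla^2\psi(\vx+t\vh)[\vh,\vh]}\,\dd t$. If the intended $\textup{m}_\psi$ is instead the local norm $\norm{\vx-\vx'}_{\nabla^2\psi(\vx)}$, self-concordance makes the two comparable for nearby points, and for far points the chaining step below handles it.

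\textbf{Reduction to needles.} Write $\pi\propto\exp(-V)$ with $V\succeq\alpha\psi$. By approximation, it suffices to treat compact closed $S_1,S_2$ with $\textup{m}_\psi$-separation $\rho>0$. Suppose the inequality fails with a constant $c$ to be chosen. The standard localization-lemma argument then produces a needle: a segment $[\vx_0,\vx_1]$ and a density on it of the form $e^{-V(\vx_0+t\vh)}\ell(t)^{d-1}$ with $\ell$ affine and nonnegative, for which the one-dimensional inequality also fails.

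The key observation is that restricting $V-\alpha\psi$ to a line keeps it convex, and the factor $\ell^{d-1}$ is log-concave. So the needle density $f$ satisfies $-(\log f)''(t)\ge\alpha\,\psi_1''(t)$, where $\psi_1(t)=\psi(\vx_0+t\vh)$. The function $\psi_1$ is still a self-concordant convex function on the line, and the induced distance on the segment is the one-dimensional distance $\int\sqrt{\psi_1''}$. It therefore suffices to prove the statement for a density $f$ on an interval satisfying $-(\log f)''\ge\alpha\psi_1''$.

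In one dimension the worst case has $S_3=[a,b]$ lying between $S_1$ (to the left) and $S_2$ (to the right). Any other configuration splits into such pieces.

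\textbf{One-dimensional estimate.} I would first prove the local tail bound
\[\min\{F(a),1-F(a)\}\le \frac{C\,f(a)}{\sqrt{\alpha\,\psi_1''(a)}},\]
where $F$ is the cumulative distribution function of $f$. By self-concordance, $\psi_1''\ge\frac14\psi_1''(a)$ on the Dikin window $\{t:|t-a|\sqrt{\psi_1''(a)}\le\frac12\}$.

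On this window, $\log f$ is $\frac{\alpha}{4}\psi_1''(a)$-strongly concave. Consider the side $t>a$ and assume first that $(\log f)'(a)\le 0$ (otherwise use the other tail). Within the window, $f(t)\le f(a)e^{-\alpha\psi_1''(a)(t-a)^2/8}$. At the window's right end, the slope $(\log f)'$ is at most $-\Omega\bigl(\alpha\sqrt{\psi_1''(a)}\bigr)$. By concavity of $\log f$, beyond the window $f$ decays at least exponentially with that rate. Summing the Gaussian part and the exponential tail gives mass $O\bigl(f(a)/\sqrt{\alpha\psi_1''(a)}\bigr)+O\bigl(f(a)/(\alpha\sqrt{\psi_1''(a)})\bigr)$. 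Since $\alpha\le1$, this is $O\bigl(f(a)/(\alpha\sqrt{\psi_1''(a)})\bigr)$, which loses a $\sqrt\alpha$ relative to the local bound I wanted. That loss is exactly where the final $\alpha$ (rather than $\sqrt\alpha$) rate comes from, so I will accept it and use
\[\min\{F(a),1-F(a)\}\le \frac{C f(a)}{\alpha\sqrt{\psi_1''(a)}}.\]

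\textbf{Short and long gaps.} If $\textup{m}_\psi(a,b)\le\frac12$, then $\psi_1''$ is within constant factors of $\psi_1''(a)$ on $[a,b]$. Unimodality gives $f\ge\min\{f(a),f(b)\}$ on $[a,b]$. Hence
\[\pi([a,b])\gtrsim (b-a)\min\{f(a),f(b)\}\gtrsim \textup{m}_\psi(a,b)\min\Bigl\{\tfrac{f(a)}{\sqrt{\psi_1''(a)}},\tfrac{f(b)}{\sqrt{\psi_1''(b)}}\Bigr\}.\]
Applying the tail bound at whichever endpoint attains the minimum gives $\pi(S_3)\gtrsim\alpha\,\textup{m}_\psi(a,b)\min\{\pi(S_1),\pi(S_2)\}$.

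If $\textup{m}_\psi(a,b)>\frac12$, subdivide $[a,b]$ into $N\approx 2\textup{m}_\psi(a,b)$ consecutive pieces of length $\frac12$ in $\textup{m}_\psi$ and apply the short-gap bound iteratively, moving each cut inward. Each unit of distance contributes $\Omega(\alpha)$ times the smaller side mass, and the side masses only grow as the cuts move inward, so the contributions add. This yields $\Omega(\alpha\,\textup{m}_\psi)$, or a constant fraction of the mass, in which case the bound is trivial once the constant is adjusted.

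\textbf{Main obstacle.} The delicate step is the tail estimate. Strong log-concavity holds only relative to a varying metric, and self-concordance controls $\psi_1''$ only inside unit Dikin windows. Outside the window I must fall back on plain log-concavity, carried forward by the slope obtained at the window's edge, and I need to check that this propagation actually yields the claimed $\alpha\sqrt{\psi_1''(a)}$ decay rate. A secondary check is that the needle reduction genuinely preserves relative strong log-concavity: this holds because $\ell^{d-1}$ is log-concave and restriction to a line preserves $V-\alpha\psi\succeq0$.
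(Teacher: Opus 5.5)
The paper does not prove this statement. It is imported as Lemma 9 of \cite{GopiLLST25} and used only as a black box in Appendix~\ref{app:proof_via_conductance}, so there is no in-paper argument to compare yours against. Judged on its own, your proposal is a correct proof along the standard localization route.

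\textbf{Why the main steps go through.} Restricting to a needle preserves convexity of $V-\alpha\psi$ and self-concordance, and $\ell^{d-1}$ only adds log-concavity. The step you flagged as the main obstacle does close:
\begin{itemize}
\item With $\sigma_a\defeq\sqrt{\psi_1''(a)}$, at the edge of the window $|t-a|\sigma_a\le\frac12$ the slope of $\log f$ is at most $-\alpha\sigma_a/8$, and concavity keeps it there.
\item So the mass beyond the window is at most $8f(a)/(\alpha\sigma_a)$, while the window itself carries at most $f(a)/(2\sigma_a)$.
\end{itemize}
On short gaps, comparability of $\psi_1''$ is cleanest via $|(\log\psi_1'')'|\le 2\sqrt{\psi_1''}$. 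This gives a factor of at most $e$ over any interval of needle length at most $\frac12$. The long-gap case is simpler than you describe. Each of the $\lfloor 2\,\textup{m}_\psi(a,b)\rfloor$ pieces already contributes $\Omega(\alpha)$ times the smaller original side mass, because side masses only grow as cuts move inward. No ``constant fraction'' alternative is needed.

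\textbf{Details to tighten (none serious).}
\begin{itemize}
\item The Lov\'asz--Simonovits lemma is stated for lower semicontinuous integrands such as $\epsilon\mathbf{1}_{S_1}\pi-\mathbf{1}_{S_3}\pi$. So the approximation should make $S_1,S_2$ open and $S_3$ closed, not $S_1,S_2$ compact. Open $\delta$-neighborhoods work, at a cost of $2\delta$ in separation.
\item You wrote that the $\nabla^2\psi$-length of a segment ``equals'' $\textup{m}_\psi$. It only upper bounds it, but that is exactly the direction you need for the needle separation to stay at least $\rho$.
\item Reducing a general one-dimensional partition to a single gap $[a,b]$ needs a line of bookkeeping. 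For example, apply the single-gap bound to the two transition gaps bounding the monochromatic region where the mass to the left crosses $\min\{\pi(S_1),\pi(S_2)\}$.
\item The hedge about local norms is unnecessary: the paper takes $\textup{m}_\psi$ to be the Riemannian distance of $\nabla^2\psi$.
\end{itemize}

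\textbf{The $\alpha$ rate is sharp.} What your write-up adds over the citation is that it shows where the rate comes from, and the loss of $\sqrt\alpha$ you accepted is forced. Take $d=1$, $\psi(t)=t\arctan t-\frac12\log(1+t^2)$ (so $\psi''=(1+t^2)^{-1}$ and $\psi$ is self-concordant), and $\pi\propto\exp(-\alpha\psi)$.
\begin{itemize}
\item Since $0\le\psi(t)\le 2|t|$, the density is at most $\alpha$.
\item For $r\le\frac14$, the partition with $S_3=[-r,r]$ has $\pi(S_3)\le 2\alpha r$ and $\pi(S_1)=\pi(S_2)\ge\frac14$.
\item Also $\textup{m}_\psi(-r,r)=\int_{-r}^{r}(1+t^2)^{-1/2}\dd t\ge\sqrt2\,r$.
\end{itemize}
The ratio is therefore at most $4\sqrt2\,\alpha\,\textup{m}_\psi(-r,r)$. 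The mass sits in exponential tails far outside the Dikin window, which is exactly the regime your tail estimate isolates. This loss is what yields the $\min\{\alpha,\alpha^2\}$ rate of Theorem~\ref{thm:mixing_alt}, which the paper's variance-contraction argument avoids.
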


\begin{fact}[Lemma 10, \cite{GopiLLST25}]\label{fact:lem10_gllst}
Let $\vhi: \R^d \to \R$ be convex and $\psi \defeq \vhi^\sharp$ be an LLT. For any $\vx,\vx'\in \R^d$ such that $\textup{m}_\psi(\vx,\vx')\le \frac14$, \[
    \dtv(\tilt_\vx\exp(-\vhi),\tilt_{\vx'}\exp(-\vhi))\le\frac{1}{2}.
    \]
\end{fact}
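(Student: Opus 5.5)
The plan is to avoid self-concordance entirely and pass through the Hellinger distance, using the fact that the Fisher information metric of the exponential family $\{\tilt_\vx\exp(-\vhi)\}_{\vx \in \R^d}$ is exactly $\nabla^2\psi$. I take $\textup{m}_\psi(\vx,\vx')$ to be the Riemannian distance induced by $\nabla^2\psi$, i.e.\ the infimum over smooth paths $\gamma:[0,1]\to\R^d$ from $\vx$ to $\vx'$ of $\int_0^1 \norms{\gamma'(t)}_{\nabla^2\psi(\gamma(t))}\,\dd t$. Write $p_\vz \defeq \tilt_\vz\exp(-\vhi)$, so $p_\vz(\vy) = \exp(\inprod{\vz}{\vy}-\vhi(\vy)-\psi(\vz))$.

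\textbf{Step 1 (Fisher information).} Compute the score along a direction: $\partial_\vz \log p_\vz(\vy) = \vy - \nabla\psi(\vz)$. By Fact~\ref{fact:llt-derivatives}, for any $\vv$,
\[\E_{\vy\sim p_\vz}\Brack{\inprod{\vy-\nabla\psi(\vz)}{\vv}^2} = \nabla^2\psi(\vz)[\vv,\vv].\]
Hence the Fisher information of the family at $\vz$ is $\nabla^2\psi(\vz)$.

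\textbf{Step 2 (Hellinger is bounded by half the path length).} Use the convention $H(p,q)\defeq \norms{\sqrt p-\sqrt q}_{L^2(\dd\vy)}$. Along a path $\gamma$, differentiate $\sqrt{p_{\gamma(t)}}$ in $t$:
\[\frac{\dd}{\dd t}\sqrt{p_{\gamma(t)}} = \tfrac12\inprod{\vy-\nabla\psi(\gamma(t))}{\gamma'(t)}\sqrt{p_{\gamma(t)}},\]
whose $L^2(\dd\vy)$ norm is $\tfrac12\norms{\gamma'(t)}_{\nabla^2\psi(\gamma(t))}$ by Step 1. By the triangle inequality for the $L^2$-valued integral (Minkowski), $H(p_\vx,p_{\vx'}) \le \tfrac12\int_0^1\norms{\gamma'(t)}_{\nabla^2\psi(\gamma(t))}\dd t$. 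Taking the infimum over paths gives $H \le \tfrac12\textup{m}_\psi(\vx,\vx') \le \tfrac18$.

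\textbf{Step 3 (Hellinger to TV).} Apply the standard bound
\[\dtv(p,q)=\tfrac12\int|\sqrt p-\sqrt q|(\sqrt p+\sqrt q) \le \tfrac12 H(p,q)\,\norms{\sqrt p+\sqrt q}_{L^2} \le H(p,q),\]
which follows from Cauchy--Schwarz and $\norms{\sqrt p+\sqrt q}_{L^2}\le 2$. This yields $\dtv \le \tfrac18 \le \tfrac12$, with room to spare.

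\textbf{Main obstacle.} The only delicate point is Step 2: justifying differentiation under the integral and measurability of the path integral. Since $\psi$ is finite and smooth on all of $\R^d$, $p_\vz$ has exponential moments locally uniformly in $\vz$, so dominated convergence handles this. If the intended definition of $\textup{m}_\psi$ is instead the local norm $\norms{\vx'-\vx}_{\nabla^2\psi(\vx)}$, I would fall back on a second route. The KL divergence $\dkl(p_\vx\|p_{\vx'})$ equals the Bregman divergence $\psi(\vx')-\psi(\vx)-\inprod{\nabla\psi(\vx)}{\vx'-\vx}$. Self-concordance (Fact~\ref{fact:llt-smooth-convex}) bounds this by $-r-\log(1-r)$ for $r=\norms{\vx'-\vx}_{\nabla^2\psi(\vx)}\le\tfrac14$, which is about $0.04$. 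Pinsker's inequality then gives $\dtv\le\sqrt{0.02}<\tfrac12$.
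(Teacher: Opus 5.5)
Your argument is correct, and your main route differs from the one the paper relies on. The paper does not reprove this fact; it imports Lemma 10 of \cite{GopiLLST25}, the same lemma it cites for Fact~\ref{fact:kld_breg}. That reuse suggests the cited argument is essentially your fallback: write $\dkl$ between two tilts as a Bregman divergence of $\psi$, bound it at scale $\frac14$ using self-concordance of $\psi$ (Fact~\ref{fact:llt-smooth-convex}(1)), and finish with Pinsker.

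Your primary route instead treats $\vz \mapsto \sqrt{\tilt_\vz\exp(-\vhi)}$ as a curve in $L^2$ whose speed is $\frac12\norms{\cdot}_{\nabla^2\psi}$. Hence the Hellinger distance is at most half the $\nabla^2\psi$-path length, and $\dtv \le H$. This buys three things. First, it never uses self-concordance, or even convexity of $\vhi$; it holds for any exponential family with finite log-partition function, and more generally for any regular model with its Fisher metric. Second, it matches the Riemannian-distance reading of $\textup{m}_\psi$ directly, whereas the Bregman/Pinsker route needs an extra self-concordance estimate comparing geodesic distance with the local norm. Third, it gives the sharper bound $\dtv \le \frac18$.

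Conversely, the self-concordance route is the natural one when $\textup{m}_\psi$ means the local norm $r = \norms{\vx'-\vx}_{\nabla^2\psi(\vx)}$. In that case even your Hellinger argument would need self-concordance to bound the straight-line path length, e.g.\ by $-\log(1-r)$. So your fallback is the right complement.

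One minor point: the step you flag as delicate is easier than you suggest. You only need the fundamental theorem of calculus pointwise in $\vy$, then Minkowski's integral inequality. The resulting $L^2$ norm is computed exactly by Fact~\ref{fact:llt-derivatives}, so no interchange of derivative and integral is needed beyond that fact.
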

In the above two results, $\textup{m}_\psi$ is defined to be the metric induced by $\nabla^2\psi$. We defer a more complete discussion of the Riemannian geometry involved to \cite{GopiLLST25}.

Now to prove our mixing time bounds, we bound the Poincar\'e constant.
\begin{lemma}\label{lem:pi_constant_by_conductance}
Let $\vhi: \R^d \to \R$ be convex, $\psi \defeq \vhi^\sharp$, and let $V$ be $\alpha$-$\psi$-strongly convex.  Then the Poincar\'e constant of one step of Algorithm \ref{alg:alternate} with $\pi\propto \exp(-V)$ as target distribution is $\Omega(\frac{\min\{\alpha, \alpha^2\}}\tau )$.
\end{lemma}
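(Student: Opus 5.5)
We apply Fact~\ref{fact:old_school_pi_bound} to the one-step kernel $\mathsf{t}_{\vx}$ of Algorithm~\ref{alg:alternate}, which draws $\vy\sim\tilt_{\vx}\exp(-\vhi^{*\tau})$ and then $\vx'\sim\tilt_{\vy}\exp(-\tau\psi)\pi$. The metric will be $\textup{m}_{\tau\psi}=\sqrt{\tau}\,\textup{m}_\psi$, the Riemannian metric induced by $\nabla^2(\tau\psi)$. This metric matches both pieces we need. By Lemma~\ref{lem:lltconv}, $\tau\psi=(\vhi^{*\tau})^\sharp$ is itself an LLT. Also, $V\succeq\alpha\psi=\frac{\alpha}{\tau}(\tau\psi)$, so $\pi$ is $\frac{\alpha}{\tau}$-$(\tau\psi)$-strongly log-concave. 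The plan is therefore to verify conditions 1 and 2 of Fact~\ref{fact:old_school_pi_bound} in the metric $\textup{m}_{\tau\psi}$ and then multiply the resulting constants.

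\textbf{Condition 1 (overlap).} The second half-step applies the same Markov kernel to both chains, so by data processing,
\[\dtv(\mathsf{t}_{\vx},\mathsf{t}_{\vx'})\le\dtv\Par{\tilt_{\vx}\exp(-\vhi^{*\tau}),\tilt_{\vx'}\exp(-\vhi^{*\tau})}.\]
Fact~\ref{fact:lem10_gllst} applies to the convex function $\vhi^{*\tau}$: it is convex because log-concavity is preserved under convolution (Pr\'ekopa), and its LLT is $\tau\psi$. Hence the right-hand side is at most $\frac12$ whenever $\textup{m}_{\tau\psi}(\vx,\vx')\le\frac14$, and we may take $\Delta=\frac14$.

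\textbf{Condition 2 (isoperimetry).} Here we use Fact~\ref{fact:lem9_gllst} with $\tau\psi$ in place of $\psi$ and strong log-concavity parameter $\frac{\alpha}{\tau}$. We need $\tau\psi$ to be self-concordant. It is the LLT of a convex function, so self-concordance follows from item (1) of Fact~\ref{fact:llt-smooth-convex}; alternatively, scaling a self-concordant function by $\tau\ge1$ preserves self-concordance. The main obstacle is that Fact~\ref{fact:lem9_gllst} requires the parameter to lie in $(0,1]$. When $\alpha/\tau\le1$ it gives isoperimetry $\Omega(\alpha/\tau)$ directly. When $\alpha/\tau>1$, we note that $\pi$ is also $1$-$(\tau\psi)$-strongly log-concave, which yields isoperimetry $\Omega(1)$. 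Either way the isoperimetric constant is $\Omega(\min\{\alpha/\tau,1\})$.

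\textbf{Combining.} Fact~\ref{fact:old_school_pi_bound} gives a Poincar\'e constant
\[\Omega\Par{\min\{\alpha/\tau,1\}^2}=\Omega\Par{\min\{\alpha^2/\tau^2,1\}}.\]
To reach the claimed $\Omega(\min\{\alpha,\alpha^2\}/\tau)$, the bound must be compared with the stated rate, and this comparison is the delicate bookkeeping step. If the crude bound $\alpha^2/\tau^2$ falls short, one remedy is to use the metric $\textup{m}_\psi$ instead and control the TV distance more carefully. The $\tau$-fold sum of independent tilts has covariance $\tau\nabla^2\psi$, so nearby points in $\textup{m}_\psi$ at scale $1/\sqrt{\tau}$ overlap, giving $\Delta\approx\tau^{-1/2}$. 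Isoperimetry in $\textup{m}_\psi$ is $\Omega(\min\{\alpha,1\})$ from Fact~\ref{fact:lem9_gllst}, and Fact~\ref{fact:old_school_pi_bound} then yields
\[\Omega\Par{\frac{\min\{\alpha,1\}^2}{\tau}}.\]
This matches $\min\{\alpha,\alpha^2\}/\tau$ in the regime $\alpha\le1$, which is the regime where the stated bound is $\alpha^2/\tau$. For $\alpha>1$ the stated bound $\alpha/\tau$ is larger than $\min\{\alpha,1\}^2/\tau=1/\tau$, so the argument above does not yet cover that regime. To handle it, I would check whether Fact~\ref{fact:lem9_gllst} in fact gives the stronger isoperimetry $\Omega(\sqrt{\alpha})$ for large $\alpha$, as in the Gaussian case, which would yield $\Omega(\alpha/\tau)$.
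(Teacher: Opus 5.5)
\textbf{Gap: the regime $\alpha>1$ is not proved.} Your argument is complete for $\alpha\le 1$. There your second route (metric $\textup{m}_\psi$, $\Delta=\frac{1}{4\sqrt{\tau}}$, isoperimetry $\Omega(\alpha)$ from Fact~\ref{fact:lem9_gllst}) is exactly the paper's second case. The overlap step needs no new covariance argument: $\textup{m}_{\tau\psi}=\sqrt{\tau}\,\textup{m}_\psi$ together with Fact~\ref{fact:lem10_gllst} applied with $\vhi\gets\vhi^{*\tau}$ already gives it. For $1<\alpha<\tau$, however, your two routes give only $\max\{\alpha^2/\tau^2,\,1/\tau\}$, not $\alpha/\tau$. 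At $\alpha=\sqrt{\tau}$, for example, you are off by a factor $\sqrt{\tau}$, and you leave this regime open. The loss comes from applying Fact~\ref{fact:lem9_gllst} at the wrong scale, since that fact is not scale-invariant. At scale $\tau\psi$ you throw away the improved self-concordance of $\tau\psi$. At scale $\psi$ with parameter $\min\{\alpha,1\}$ you throw away all convexity beyond $1$. The right scale is $\max\{\alpha,1\}\psi$, where the strong log-concavity parameter is exactly $\min\{\alpha,1\}$.

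\textbf{The missing step.} For $\alpha\ge 1$, apply Fact~\ref{fact:lem9_gllst} to $\omega\defeq\alpha\psi$. Since $\alpha\ge 1$, $\omega$ is self-concordant; this is the same scaling observation you made for $\tau\psi$. Also $V\succeq\omega$, so $\pi$ is $1$-$\omega$-strongly log-concave. Fact~\ref{fact:lem9_gllst} then gives isoperimetry $\Omega(1)$ in $\textup{m}_\omega=\sqrt{\alpha}\,\textup{m}_\psi$. This is precisely the $\Omega(\sqrt{\alpha})$ isoperimetry in $\textup{m}_\psi$ you conjectured, and it comes from rescaling rather than from a sharper form of the fact. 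Now take $\Delta=\frac14\sqrt{\alpha/\tau}$ in $\textup{m}_\omega$. Then $\textup{m}_\omega(\vx,\vx')\le\Delta$ implies $\textup{m}_{\tau\psi}(\vx,\vx')=\sqrt{\tau/\alpha}\,\textup{m}_\omega(\vx,\vx')\le\frac14$. So condition 1 holds by Fact~\ref{fact:lem10_gllst} with $\vhi\gets\vhi^{*\tau}$, and Fact~\ref{fact:old_school_pi_bound} yields $\Omega(\Delta^2)=\Omega(\alpha/\tau)$. This is the paper's first case, and it closes the regime your proposal leaves open.
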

\begin{proof}
We lower bound the Poincar\'e constant in the two cases $\alpha \ge 1$ and $\alpha \le 1$.

First, suppose that $\alpha \ge 1$, and define $\omega \gets \alpha \psi$. Then, $\omega$ is self-concordant, so Fact~\ref{fact:lem9_gllst} concludes that for $\textup{m} \gets \textup{m}_{\omega}$, we may take $\alpha = 1$ in Fact~\ref{fact:old_school_pi_bound}. Further, taking $\Delta = \frac 1 4 \sqrt{\alpha/\tau}$, we have
\[\textup{m}_{\omega}\Par{\vx, \vx'} \le \Delta \implies \textup{m}_{\tau \psi}\Par{\vx, \vx'} \le \frac 1 4,\]
so that Fact~\ref{fact:lem10_gllst} with $\vhi \gets \vhi^{\ast \tau}$ now gives
\[
\dtv(\mathsf{t}_\vx, \mathsf{t}_{\vx'})\leq \dtv(\tilt_\vx\exp(-\vhi^{*\tau}),\tilt_{\vx'}\exp(-\vhi^{*\tau}))\le\frac{1}{2},
\]
where the first inequality uses the coupling characterization of the TV distance. Plugging in these choices of $\alpha$ and $\Delta$ into Fact~\ref{fact:old_school_pi_bound} now gives the claimed lower bound.

In the other case $\alpha \le 1$, we directly apply Fact~\ref{fact:lem9_gllst} with $\textup{m} \gets \textup{m}_\psi$, so that $\alpha \gets \alpha$ in Fact~\ref{fact:old_school_pi_bound}. Then, the analogous application of Fact~\ref{fact:lem10_gllst} requires $\Delta = \frac 1 4 \sqrt{1/\tau}$, and then Fact~\ref{fact:old_school_pi_bound} gives the result.
\end{proof}

Equipped with this lemma, we prove an analogous result to Theorem~\ref{thm:mixing}.
\begin{theorem}\label{thm:mixing_alt}
Let $\vhi: \R^d \to \R$ be convex, $\psi \defeq \vhi^\sharp$, and $\pi=\tpi^X \propto \exp(-V)$ such that $V \succeq \alpha\psi$.  For any $\mu_0\ll \pi$, iterate $\vx_k$ of the proximal sampler (Algorithm \ref{alg:alternate}) has density $\mu_k$ satisfying
    \[
        \chi^2(\mu_k \|\pi) \le \frac{1}{(1 + O(\min\{\alpha, \alpha^2\}/\tau))^{k}}\chi^2(\mu_0\|\pi).
    \]
\end{theorem}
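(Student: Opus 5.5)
The plan is to convert the spectral-gap-type bound of Lemma~\ref{lem:pi_constant_by_conductance} into $\chi^2$ contraction, using the same Hilbert-space bookkeeping as in the proof of Theorem~\ref{thm:mixing}. Let $h_k \defeq \frac{\dd \mu_k}{\dd \pi} - 1$. By Proposition~\ref{prop:gibbs_loc}, one iteration of Algorithm~\ref{alg:alternate} is the Gibbs operator $\sfP_\calX = \sfK\sfK^\dagger$. Since $\sfP_\calX$ is self-adjoint on $L^2(\pi)$ by \eqref{eq:reversible}, we have $h_k = \sfP_\calX^k h_0$ and $\chi^2(\mu_k\|\pi) = \norms{h_k}_{L^2(\pi)}^2$. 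Each $h_k$ is mean-zero, by Lemma~\ref{lem:mean-zero-map} and stationarity. It therefore suffices to show that $\sfP_\calX$ contracts the variance of mean-zero functions by a factor $(1+c\min\{\alpha,\alpha^2\}/\tau)^{-1}$, and then recurse.

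To get this per-step contraction, I would read the conclusion of Lemma~\ref{lem:pi_constant_by_conductance} as a spectral gap: every mean-zero $f$ satisfies $\mathcal{E}_{\sfP_\calX}(f,f) \ge \gamma \Var_\pi[f]$, where $\gamma = \Omega(\min\{\alpha,\alpha^2\}/\tau)$. Equivalently, $\lam_2(\sfP_\calX) \le 1-\gamma$. Because $\sfP_\calX = \sfK\sfK^\dagger$ is positive semidefinite on $L^2(\pi)$ (since $\inprod{\sfP_\calX f}{f} = \norms{\sfK^\dagger f}^2 \ge 0$), its spectrum on mean-zero functions lies in $[0, 1-\gamma]$. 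There is thus no negative-eigenvalue issue, and the operator norm on $\mathbf{1}^\perp$ is at most $1-\gamma$. Lemma~\ref{lem:twice_gap} then gives $\Var_\pi[\sfP_\calX f] \le (1-\gamma)^2 \Var_\pi[f]$. Finally, since $(1-\gamma)^2 \le 1-\gamma \le (1+\gamma)^{-1}$, the constant can be absorbed into the $O(\cdot)$ to get the form stated in Theorem~\ref{thm:mixing_alt}.

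It remains to check the hypotheses. The condition $V \succeq \alpha\psi$ is exactly the $\alpha$-$\psi$-strong log-concavity needed by Lemma~\ref{lem:pi_constant_by_conductance}, via Fact~\ref{fact:lem9_gllst}. The LLT $\psi$ is self-concordant by Fact~\ref{fact:llt-smooth-convex}. The step $\vy\sim\tilt_\vx\exp(-\vhi^{*\tau})$ has LLT $\tau\psi$ by Lemma~\ref{lem:lltconv}, which is what Fact~\ref{fact:lem10_gllst} needs.

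The main obstacle I expect is interpretive rather than technical. Fact~\ref{fact:old_school_pi_bound} yields a ``Poincar\'e constant ($\chi^2$ contraction)'' for the kernel $\mathsf{t}_\vx$, and I need to confirm that this quantity is the Dirichlet-form gap of the full two-step kernel $\sfP_\calX$, not of a lazy version. If it is instead stated for a lazy chain, I would fall back on positivity of $\sfP_\calX$: the gaps of $\sfP_\calX$ and of $\half(I+\sfP_\calX)$ differ only by a factor of $2$, which is again absorbed into the constant.
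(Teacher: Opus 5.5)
Your proposal is correct and follows the paper's route: the paper's proof is only the remark that Theorem~\ref{thm:mixing_alt} is a direct consequence of the Poincar\'e-constant bound in Lemma~\ref{lem:pi_constant_by_conductance}, and you supply the implicit conversion from spectral gap to $\chi^2$ contraction, using the same $h_k = \sfP_\calX^k h_0$ bookkeeping as in the proof of Theorem~\ref{thm:mixing}. Your observation that $\sfP_\calX = \sfK\sfK^\dagger$ is positive semidefinite is what makes this conversion valid without passing to a lazy chain, since it rules out eigenvalues near $-1$; it also justifies the step $\lam_2(\sfP_\calX^2) = \lam_2(\sfP_\calX)^2$ in Lemma~\ref{lem:twice_gap}.
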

\begin{proof}
    This is a direct consequence of Lemma~\ref{lem:pi_constant_by_conductance}.
\end{proof}

We briefly compare Theorem~\ref{thm:mixing_alt} to Theorem~\ref{thm:mixing}, as it differs in three main ways. First, the explicit constant hidden in the rate is weaker, as it relies on the characterization in Fact~\ref{fact:old_school_pi_bound} which is typically lossy. Second, as the isoperimetric inequality used in its proof is based on log-concavity (via Fact~\ref{fact:lem9_gllst}), this proof technique must rely on $\psi$-strong convexity as its starting assumption, rather than a weaker assumption such as a $\psi$-Poincar\'e inequality (as in Theorem~\ref{thm:mixing}).

Third, and perhaps qualitatively most importantly, Theorem~\ref{thm:mixing_alt} is unable to match the $\approx \frac \tau \alpha$ convergence rate of Theorem~\ref{thm:mixing} unless $\alpha \ge 1$. This led to a particularly stark gap in the framework of \cite{GopiLLST25}, which only permitted taking $\tau = 1$, and thus their analog of Theorem~\ref{thm:mixing_alt} was quadratically worse than Theorem~\ref{thm:mixing}. Our localization framework in Section~\ref{sec:functional_stochastic_localization} allows for taking an arbitrary $\tau \in \N$, and our analysis strategy in Section~\ref{sec:functional_stochastic_localization} also achieves the correct dependence on $\alpha$ (Theorem~\ref{thm:mixing}), by bypassing the prior work's reliance on self-concordant isoperimetry.
\section{Entropic Contraction Under Assumption~\ref{assume:llt}}\label{app:entropy}
In this section, we prove entropic contraction of Algorithm~\ref{alg:alternate}
under a condition that we assume $\psi$, the LLT used, satisfies (Assumption~\ref{assume:llt}). While we cannot prove Assumption~\ref{assume:llt} in full generality (and indeed, numerically there appear to be counterexamples), it is a natural condition that we believe serves as a stepping stone towards more generic proofs of entropic contraction.

We start with some definitions. For a convex function $\vhi:\R^d\to\R$, the induced Bregman divergence and Wasserstein distance are respectively defined as
\begin{align*}
    D^\vhi(\vx'\|\vx) &\defeq \vhi(\vx') - \vhi(\vx) -\inprod{\nabla\vhi(\vx)}{\vx'-\vx},\\
    W^\vhi(\mu'\|\mu) &\defeq \inf_{\gamma \in \Gamma(\mu',\mu)}\E_{(\vx,\vx')\sim \gamma}\Brack{D^\vhi(\vx'\|\vx)}
\end{align*}
where $\Gamma(\mu',\mu)$ is the set of all couplings of the two densities $\mu',\mu$, i.e., joint densities whose marginals are as specified. We will also use the following two helper facts from prior work.

\begin{fact}[Lemma 10, \cite{GopiLLST25}]\label{fact:kld_breg}
For a density $\mu =\exp(-\vhi): \R^d \to \R_{\ge 0}$ and tilts $\vx, \vx' \in \R^d$,
\[\dkl\Par{\tilt_{\vx'}\mu \| \tilt_{\vx}\mu} = D^{\vhi^\sharp}(\vx \| \vx'). \]
\end{fact}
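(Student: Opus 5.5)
The plan is to compute the divergence directly from the explicit form of tilted densities. With $\psi \defeq \vhi^\sharp$, the identity $\tilt_{\vx}\exp(-\vhi)(\vy) = \exp\Par{\inprod{\vx}{\vy} - \vhi(\vy) - \psi(\vx)}$ from Section~\ref{sec:preliminaries} holds for every tilt. The one step needing care is that this formula requires the tilts to exist, i.e.\ $\psi(\vx), \psi(\vx') < \infty$. I will assume this, as is implicit in the statement. On the domain of $\psi$ the function is differentiable, so Fact~\ref{fact:llt-derivatives} applies.

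\textbf{Step 1: the log-likelihood ratio.} The factors $\exp(-\vhi(\vy))$ cancel in the ratio of the two densities. This gives the affine function
\[\log\frac{\tilt_{\vx'}\mu(\vy)}{\tilt_{\vx}\mu(\vy)} = \inprod{\vx' - \vx}{\vy} - \psi(\vx') + \psi(\vx).\]
Because the ratio is affine in $\vy$, the KL divergence is finite as soon as $\tilt_{\vx'}\mu$ has a finite first moment. That holds because $\psi$ is finite in a neighborhood of $\vx'$, or at least differentiable at $\vx'$, which is the setting of Fact~\ref{fact:llt-derivatives}.

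\textbf{Step 2: take the expectation.} Integrate the ratio against $\vy \sim \tilt_{\vx'}\mu$. By Fact~\ref{fact:llt-derivatives}, $\E_{\vy\sim\tilt_{\vx'}\exp(-\vhi)}[\vy] = \nabla\psi(\vx')$, so
\[\dkl\Par{\tilt_{\vx'}\mu \| \tilt_{\vx}\mu} = \psi(\vx) - \psi(\vx') - \inprod{\nabla\psi(\vx')}{\vx - \vx'}.\]

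\textbf{Step 3: identify the Bregman divergence.} The right-hand side of Step~2 is exactly $D^{\psi}(\vx\|\vx')$ by the definition of the Bregman divergence, with base point $\vx'$ and argument $\vx$. This proves the claim.

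The normalization $\int \exp(-\vhi) = 1$ plays no role, since additive constants in $\vhi$ cancel in the ratio and shift $\psi$ only by a constant.
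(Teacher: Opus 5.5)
Your proof is correct. The paper imports this fact from \cite{GopiLLST25} without proof, and your direct computation is the standard exponential-family argument behind it. You note that the log-ratio of two tilts is affine in $\vy$, evaluate its mean under $\tilt_{\vx'}\mu$ using $\nabla\psi(\vx') = \E_{\tilt_{\vx'}\mu}[\vy]$ (Fact~\ref{fact:llt-derivatives}), and match the result to $D^{\psi}(\vx\|\vx') = \psi(\vx) - \psi(\vx') - \inprod{\nabla\psi(\vx')}{\vx - \vx'}$ with the argument order right.

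Your remark that the normalization of $\exp(-\vhi)$ is irrelevant is worth keeping. The paper later applies this fact to the unnormalized $\exp(-(V + \tau\psi))$ in Lemma~\ref{lem:w_psi_contraction}.
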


\begin{fact}[Corollary 1, \cite{AhnC21}]\label{fact:slc}
    Suppose $\pi$ is $\vhi$-strongly log-concave. Then, for all densities $\mu$ on $\R^d$, \[
    \dkl(\mu\|\pi)\ge W^\vhi(\mu\|\pi).
    \]
\end{fact}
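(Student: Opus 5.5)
The plan is a Cordero-Erausquin style transport argument, which gives a Bregman analog of Talagrand's inequality. Write $\pi \propto \exp(-V)$, where $V - \vhi$ is convex by $\vhi$-strong log-concavity. Assume without loss of generality that $\dkl(\mu\|\pi) < \infty$, so $\mu \ll \pi$ and $\mu$ has a density. By Brenier--McCann there is a convex $h$ whose gradient $T \defeq \nabla h$ pushes $\pi$ forward to $\mu$. Hence the pair $(T(X), X)$ with $X \sim \pi$ is a coupling in $\Gamma(\mu,\pi)$, and so
\[W^\vhi(\mu\|\pi) \le \E_{X\sim\pi}\Brack{D^\vhi(T(X)\|X)}.\]
It therefore suffices to show $\dkl(\mu\|\pi) \ge \E_\pi[D^\vhi(T(X)\|X)]$.

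\textbf{Step 1: change of variables.} The Monge--Amp\`ere equation $\pi(\vx) = \mu(T(\vx))\det \nabla T(\vx)$ holds $\pi$-a.e., with $\nabla T = \nabla^2 h$ the Aleksandrov Hessian. From it we get
\[\dkl(\mu\|\pi) = \int \log\frac{\mu(T\vx)}{\pi(T\vx)}\,\dd\pi(\vx) = \int \Brack{V(T\vx) - V(\vx) - \log\det\nabla T(\vx)}\dd\pi(\vx).\]

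\textbf{Step 2: linearize the determinant and integrate by parts.} Since $\nabla T$ is symmetric positive semidefinite, $\log\det \nabla T \le \textup{tr}(\nabla T - \id) = \textup{div}(T - \vx)$, using $\log \lambda \le \lambda - 1$ on each eigenvalue. Integrating by parts against $\pi = e^{-V}/Z$ gives
\[\int \textup{div}(T-\vx)\,\dd\pi = \int \inprod{\nabla V(\vx)}{T\vx - \vx}\,\dd\pi.\]
Substituting into Step 1 yields
\[\dkl(\mu\|\pi) \ge \int \Brack{V(T\vx) - V(\vx) - \inprod{\nabla V(\vx)}{T\vx-\vx}}\dd\pi = \E_\pi\Brack{D^V(T(X)\|X)}.\]

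\textbf{Step 3: pass from $V$ to $\vhi$.} Bregman divergences are linear in the generator, so $D^V = D^\vhi + D^{V-\vhi}$. Convexity of $V - \vhi$ gives $D^{V-\vhi} \ge 0$, hence $D^V \ge D^\vhi$ pointwise. Combining this with Step 2 and the coupling bound from the first paragraph finishes the argument.

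The main obstacle is making Step 2 rigorous. $T$ is only the gradient of a convex function, so its distributional divergence can carry a singular part, and the integration by parts may produce boundary terms at infinity. The fix I would use is the standard one. First, the singular part of the distributional Hessian of a convex $h$ is positive, so the absolutely continuous part (the Aleksandrov Hessian) is dominated by the distributional divergence, and the inequality goes in the favorable direction. Second, I would justify dropping the boundary terms by first assuming $\mu$ is compactly supported with bounded density and $V$ is smooth, and then removing these assumptions by truncation and approximation. This uses lower semicontinuity of both $\dkl$ and $W^\vhi$ under weak convergence, the latter because $D^\vhi \ge 0$ is continuous.
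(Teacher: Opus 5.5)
The paper gives no proof of this statement; it is quoted from Ahn--Chewi, so there is no internal argument to compare against. Your proof is correct and is the standard self-contained route. Cordero-Erausquin's monotone-transport argument gives $\dkl(\mu\|\pi)\ge \E_\pi[D^V(T(X)\|X)]$ along the Brenier--McCann map for any log-concave $\pi\propto e^{-V}$. Relative convexity enters only through $D^V=D^\vhi+D^{V-\vhi}\ge D^\vhi$. Using the $W_2$-optimal map instead of a $D^\vhi$-optimal coupling costs nothing, since any coupling upper bounds $W^\vhi$. Your orientation, with the Bregman divergence linearized at the $\pi$-sample, is also the one the paper uses in Appendix~\ref{app:entropy}.

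One point in your regularization sketch is backwards. To pass from approximants $\mu_n$ to $\mu$, you need $\limsup_n\dkl(\mu_n\|\pi)\le\dkl(\mu\|\pi)$ together with $\liminf_n W^\vhi(\mu_n\|\pi)\ge W^\vhi(\mu\|\pi)$. Lower semicontinuity of $\dkl$ gives the opposite inequality and does not help.

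The fix is to choose approximants along which $\dkl$ actually converges. Set $A_n\defeq\{\vx:\norm{\vx}_2\le n,\ \mu(\vx)\le n\pi(\vx)\}$ and $\mu_n\defeq\mu\mathbf{1}_{A_n}/\mu(A_n)$. Dominated convergence then gives $\dkl(\mu_n\|\pi)\to\dkl(\mu\|\pi)$, because the negative part of $\mu\log(\mu/\pi)$ is at most $\pi/e$. Meanwhile $\mu_n\to\mu$ in total variation, and $W^\vhi(\cdot\|\pi)$ is lower semicontinuous because $D^\vhi$ is continuous and nonnegative.

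For compactly supported $\mu_n$, the Brenier map can be taken bounded, which kills the boundary terms in Step~2. No smoothing of $V$ is needed for this: when $\vhi$ is smooth and $V-\vhi$ is finite and convex, as in the paper's uses, $V$ is locally Lipschitz. That already lets you test the nonnegative distributional Laplacian of $h$ against cutoffs of $e^{-V}$. Smoothing $V$ is best avoided, since it would force you to show $\dkl(\mu_n\|\pi_n)\to\dkl(\mu\|\pi)$, which is exactly the delicate direction.
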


The key assumption we rely on to derive our entropic contraction is the following.

\begin{assumption}\label{assume:llt}
Let $\psi \defeq \vhi^\sharp$, and let $V \succeq \alpha \psi$. Then $\alpha V^\sharp \preceq \vhi$.
\end{assumption}

To motivate Assumption~\ref{assume:llt}, we first consider the case $\alpha  = 1$ and $V = \psi$. Then Assumption~\ref{assume:llt} simply posits that $\psi^\sharp = \vhi^{\sharp\sharp}$ is ``less convex'' than $\vhi$, in the sense that $\vhi - \vhi^{\sharp\sharp}$ is convex. In other words, taking the LLT of a convex function $\vhi$ twice gives a function which is smaller than $\vhi$ in the convex partial ordering. There are some analogies present in the literature between the Fenchel conjugation operator $^*$ (for which $\vhi^{**} = \vhi$) and the LLT operator, e.g. Section B.2, \cite{ChenCSW22} and Section 3.1, \cite{GopiLLST25}. Unfortunately, essentially any non-quadratic example witnesses $\vhi^{\sharp\sharp} \neq \vhi$.\footnote{An easy way to see this is to take any $\vhi$ that is not self-concordant. Then $\vhi^{\sharp\sharp} = \vhi$ is ruled out simply by Fact~\ref{fact:llt-smooth-convex}.}

Assumption~\ref{assume:llt} suggests that the LLT operator satisfies a relaxed version of the property $\vhi^{\sharp\sharp} = \vhi$. More generally, the relative strong convexity and scaling in Assumption~\ref{assume:llt} is as one would expect based on the Gaussian case (Example~\ref{ex:llt_gaussian} below). Note that applying Fact~\ref{fact:llt-smooth-convex} to quadratic norms shows that Assumption~\ref{assume:llt} is indeed correct whenever $\vhi$ itself is a quadratic.

\begin{example}\label{ex:llt_gaussian}
    Let $\vhi(\vy) = \frac{1}{2}(\vy-\mu)^\top\msig^{-1}(\vy-\mu)$, so that the density $\propto \exp(-\vhi)$ is $\calN(\mu,\msig)$. Then, the density $\propto \exp(-\psi)$ is $\calN(-\mu,\msig^{-1})$.
\end{example}
\begin{proof}
    This is a simple calculation. Indeed, \begin{align*}
        \psi(\vx) &= \log\Par{\int\exp\Par{\inprod{\vx}{\vy}-\frac{1}{2}(\vy-\mu)^\top \msig^{-1}(\vy-\mu)}\dd\vy}\\
        &= \log\Par{\int\exp\Par{-\frac{1}{2}(\vy-\mu-\msig\vx)^\top \msig^{-1}(\vy-\mu-\msig\vx)+\inprod{\vx}{\mu}+\frac{1}{2}\vx^\top\msig\vx}\dd\vy}\\
        &=\inprod{\vx}{\mu}+\frac{1}{2}\vx^\top\msig\vx+C\\
        &=\frac{1}{2}(\vx+\mu)^\top\msig(\vx+\mu)+C',
    \end{align*}
    for constants $C,C'$ independent of $\vx$.
\end{proof}

We prove the convergence of Algorithm~\ref{alg:alternate} in KL divergence as follows.

\begin{theorem}\label{thm:under-conjecture}
Let $\vhi: \R^d \to \R$ be convex, $\psi \defeq \vhi^\sharp$, and $\pi=\tpi^X \propto \exp(-V)$ such that $V \succeq \alpha\psi$.  If Assumption~\ref{assume:llt} holds, then for any $\mu_0\ll \pi$, iterate $\vx_k$ of the proximal sampler (Algorithm \ref{alg:alternate}) has density $\mu_k$ satisfying
    \[
        \dkl(\mu_k \|\pi) \le \frac{1}{(1 + \alpha/\tau)^{2k-1}}\dkl(\mu_0\|\pi).
    \]
\end{theorem}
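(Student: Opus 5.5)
The plan is to split each iteration of Algorithm~\ref{alg:alternate} into its two half-steps and bound KL through each separately. The forward half-step is $\vx_k \mapsto \vy_k$ through the channel $\tpi^{Y\mid X}$. The backward half-step is $\vy_k \mapsto \vx_{k+1}$ through $\tpi^{X\mid Y}$. Write $\rho_k$ for the law of $\vy_k$; stationarity gives $\rho_k \to \tpi^Y$ and $\mu_k \to \pi$. The exponent $2k-1$ suggests a specific accounting: each of the $2k$ half-steps should contract KL by $(1+\alpha/\tau)^{-1}$, except for one, which only gets the data processing inequality. I will aim for a factor $(1+\alpha/\tau)^{-1}$ on every backward half-step, together with a factor on all but one forward half-step, or simply data processing on the first forward half-step.

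\textbf{Geometric inputs.} There are two. The first is Fact~\ref{fact:kld_breg}, which turns KL between posteriors into Bregman divergences:
\begin{itemize}
\item Since $\tpi^{Y\mid X=\vx} = \tilt_\vx \exp(-\vhi^{*\tau})$ and $(\vhi^{*\tau})^\sharp = \tau\psi$, the KL between two forward conditionals is $D^{\tau\psi}$.
\item Since $\tpi^{X\mid Y=\vy} = \tilt_\vy (e^{-\tau\psi}\pi)$ and the LLT of $V+\tau\psi$ is $\chi \defeq -V_\tau$ (Section~\ref{ssec:renorm}), the KL between two backward conditionals is $D^{\chi}$.
\end{itemize}
The second input is strong log-concavity of both marginals, to be fed into Fact~\ref{fact:slc}:
\begin{itemize}
\item $\pi$ is $\alpha$-$\psi$-SLC, hence $\frac{\alpha}{\tau}$-$(\tau\psi)$-SLC.
\item Proposition~\ref{prop:renormalization-main} gives $-\log\tpi^Y = \vhi^{*\tau} - \chi$. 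Apply Assumption~\ref{assume:llt} with $\vhi \gets \vhi^{*\tau}$, $\psi \gets \tau\psi$ and $V \gets V+\tau\psi$. Since $V+\tau\psi \succeq (1+\alpha/\tau)\,\tau\psi$, this yields $(1+\alpha/\tau)\chi \preceq \vhi^{*\tau}$. Hence $\tpi^Y$ is $\frac{\alpha}{\tau}$-$\chi$-SLC.
\end{itemize}

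\textbf{Transport inequalities.} Write each output law as a mixture of conditionals and use joint convexity of KL under an optimal coupling. This gives
\[
\dkl(\rho_k\|\tpi^Y) \le W^{\tau\psi}(\pi\|\mu_k), \qquad \dkl(\mu_{k+1}\|\pi)\le W^{\chi}(\tpi^Y\|\rho_k).
\]
The argument order of the Bregman divergences must be matched carefully with Fact~\ref{fact:slc}, and I would check this first. Combined with the SLC facts above, each display yields a bound of the form $\dkl(\text{out}) \le \frac{\tau}{\alpha}\dkl(\text{in})$.

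\textbf{Main obstacle.} The factor $\frac{\tau}{\alpha}$ is the wrong constant. To upgrade it to $(1+\alpha/\tau)^{-1}$, I will use the chain rule on the joint laws of $(\vx_k,\vy_k)$ and $(\vy_k,\vx_{k+1})$ against $\tpi$. Because the two joints share a conditional, for the backward half-step this gives
\[
\dkl(\rho_k\|\tpi^Y) = \dkl(\mu_{k+1}\|\pi) + \E\Brack{\dkl\Par{\text{law}(\vy_k\mid\vx_{k+1})\,\big\|\,\tpi^{Y\mid X=\vx_{k+1}}}}.
\]
It then suffices to show that the residual term is at least $\frac{\alpha}{\tau}\dkl(\mu_{k+1}\|\pi)$. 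I would try to obtain this by applying the transport bound above to the reversed channel: lower bound the residual by a $D^{\tau\psi}$-transport cost using Fact~\ref{fact:slc} for the $1$-$\tau\psi$-SLC kernels $\tilt_\vx\exp(-\vhi^{*\tau})$, then compare that cost to $\dkl(\mu_{k+1}\|\pi)$ via the SLC property of $\pi$. The symmetric decomposition handles the forward half-step. I expect this reverse-channel comparison, and the Bregman argument-order bookkeeping it requires, to be the hardest step. Multiplying the half-step factors over $k$ iterations then gives the stated bound.
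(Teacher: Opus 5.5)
There is a genuine gap. Your chain-rule identity $\dkl(\rho_k\|\tpi^Y)=\dkl(\mu_{k+1}\|\pi)+R_k$ is correct. But the inequality that carries the whole argument, $R_k\ge\frac{\alpha}{\tau}\dkl(\mu_{k+1}\|\pi)$, is not delivered by the tools you propose, because they point the wrong way. Fact~\ref{fact:slc} is a transport--entropy inequality, so it only ever \emph{lower} bounds a KL divergence.
\begin{itemize}
\item Applied to the kernels $\tilt_{\vx}\exp(-\vhi^{*\tau})$ (which are $\vhi^{*\tau}$-, not $\tau\psi$-, strongly log-concave in $\vy$), it bounds $R_k$ below by a $W^{\vhi^{*\tau}}$ cost. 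Gluing those couplings over $\vx\sim\mu_{k+1}$ joins $\rho_k$ to the forward image $\rho_{k+1}$ of $\mu_{k+1}$, not to anything involving $\pi$ or $\tpi^Y$.
\item Applied to $\pi$, it gives $\dkl(\mu_{k+1}\|\pi)\ge\frac{\alpha}{\tau}W^{\tau\psi}(\mu_{k+1}\|\pi)$. You need the opposite: an \emph{upper} bound on $\dkl(\mu_{k+1}\|\pi)$.
\end{itemize}
Your intermediate ``$\frac{\tau}{\alpha}$'' half-step bounds also fail, for the reason you suspected. Convexity of KL produces $D^{\tau\psi}(\vx'\|\vx)$ expanded at the $\mu_k$-point (resp.\ $D^{\chi}$ expanded at the $\rho_k$-point). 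Fact~\ref{fact:slc} instead controls the expansion at the $\pi$-point (resp.\ the $\tpi^Y$-point), and these do not compare for non-quadratic $\psi$. Finally, nothing in the hypotheses gives a KL gain on the forward half-step, and the paper's argument extracts none there.

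The missing idea is to avoid tracking KL until the very last step. The paper runs two copies of Algorithm~\ref{alg:alternate}, one started at $\pi$, and propagates the Bregman cost $W^{\tau\psi}$ between them (Lemma~\ref{lem:w_psi_contraction}). It applies Fact~\ref{fact:slc} only to the \emph{conditional} kernels, which are exactly $\vhi^{*\tau}$- and $(V+\tau\psi)$-strongly log-concave. It uses Fact~\ref{fact:kld_breg} to turn the kernel KLs back into Bregman divergences, so the argument orders alternate consistently between the $\vx$- and $\vy$-levels. Both factors of $1+\alpha/\tau$ per iteration come from the backward half-step:
\begin{itemize}
\item one from Assumption~\ref{assume:llt} in the form $(1+\alpha/\tau)(V+\tau\psi)^\sharp\preceq\vhi^{*\tau}$;
\item one from $V+\tau\psi\succeq(1+\alpha/\tau)\tau\psi$.
\end{itemize}
KL enters only in the final backward step, via convexity of KL (Fact~\ref{fact:lem3_lst21}), which supplies exactly the upper bound your scheme lacks. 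The paper stops there after the Assumption~\ref{assume:llt} factor, which is where the exponent $2k-1$ comes from.

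Be aware that this argument gives $W^{\tau\psi}(\mu_0\|\pi)$ on the right-hand side, not $\dkl(\mu_0\|\pi)$. Replacing it by $\dkl(\mu_0\|\pi)$ through Fact~\ref{fact:slc} for the $\alpha\psi$-strongly log-concave $\pi$ costs a prefactor $\tau/\alpha$, which the paper's one-line proof does not record.
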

\begin{proof}
    This is a straightforward consequence of Lemmas~\ref{lem:w_psi_contraction} and~\ref{lem:kl_le_w_psi} below. In particular, we apply Lemma~\ref{lem:w_psi_contraction} for $k - 1$ iterations, and in the last iteration, we apply Lemma~\ref{lem:kl_le_w_psi}.
\end{proof}

\begin{lemma}\label{lem:w_psi_contraction}
    Instate the setting of Theorem \ref{thm:under-conjecture}.  Consider running two copies of Algorithm~\ref{alg:alternate} initialized with $\vx_0$ and $\vx'_0$, and with iterates $\{\vy_k, \vx_{k + 1}\}_{0 \le k < K}$ and $\{\vy'_k, \vx'_{k + 1}\}_{0 \le k < K}$, respectively.
Letting $\mu_{k}, \mu'_k$ denote the total laws of $\vx_k, \vx'_k$ on an iteration $k$, for any $0 \le k < K$ in Algorithm~\ref{alg:alternate},
\[W^{\tau\psi}(\mu'_{k + 1} \| \mu_{k + 1})  \le \frac{1}{(1+\alpha/\tau)^2}W^{\tau\psi}(\mu'_k \| \mu_k). \]
\end{lemma}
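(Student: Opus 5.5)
The plan is to prove the contraction by a coupling argument that splits one iteration of Algorithm~\ref{alg:alternate} into its two half-steps. Line~\ref{line:sample_y} is the forward channel $\vx \mapsto \vy \sim \tilt_{\vx}\exp(-\vhi^{*\tau})$, and line~\ref{line:sample_x} is the backward channel $\vy \mapsto \vx \sim \tilt_{\vy}\exp(-\tau\psi)\pi$. For each half-step I would bound a Bregman--Wasserstein distance on the output space by one on the input space. Each such bound combines three ingredients:
\begin{itemize}
\item an exact KL identity between tilted conditionals (Fact~\ref{fact:kld_breg});
\item the transport inequality for relatively strongly log-concave measures (Fact~\ref{fact:slc});
\item Assumption~\ref{assume:llt}, used to compare Bregman divergences.
\end{itemize}
Start from an optimal coupling $\gamma$ of $(\mu'_k,\mu_k)$ for $W^{\tau\psi}$. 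For each pair $(\vx,\vx')$, glue on a (near-)optimal coupling of the two conditionals. This gives couplings of $(\vy,\vy')$ and then of $(\vx_{k+1},\vx'_{k+1})$, so every bound below may be taken pointwise in the coupled pair and then averaged.

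\emph{Backward half-step.} Fix $\vy,\vy'$. The conditional $\tilt_{\vy}\exp(-\tau\psi)\pi$ is proportional to $\exp(\inprod{\vy}{\cdot}-U)$ with $U\defeq\tau\psi+V$.
\begin{itemize}
\item By Fact~\ref{fact:kld_breg} applied to $\exp(-U)$, the KL divergence between the conditionals at $\vy'$ and $\vy$ equals $D^{U^\sharp}(\vy\|\vy')$; here $U^\sharp=-V_\tau$.
\item Since $U\succeq(\tau+\alpha)\psi$, each conditional is $(\tau+\alpha)\psi$-strongly log-concave. Fact~\ref{fact:slc} then bounds the $W^{(\tau+\alpha)\psi}$ distance between the conditionals by that KL divergence.
\item Applying Assumption~\ref{assume:llt} with $(V,\alpha)\gets(U,\tau+\alpha)$ gives $(\tau+\alpha)U^\sharp\preceq\vhi$, hence $D^{U^\sharp}\le\frac{1}{\tau+\alpha}D^{\vhi}$.
\end{itemize}
Rescaling $W^{(\tau+\alpha)\psi}=\frac{\tau+\alpha}{\tau}W^{\tau\psi}$ and combining, I expect
\[W^{\tau\psi}(\mu'_{k+1}\|\mu_{k+1})\le \frac{\tau}{(\tau+\alpha)^2}\,\E\Brack{D^{\vhi}(\vy\|\vy')}.\]

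\emph{Forward half-step.} The goal here is $\E[D^{\vhi}(\vy\|\vy')]\le \tau\, W^{\tau\psi}(\mu'_k\|\mu_k)$. Chaining this with the backward bound yields the factor $\frac{\tau^2}{(\tau+\alpha)^2}=(1+\alpha/\tau)^{-2}$, as claimed. Since $(\vhi^{*\tau})^\sharp=\tau\psi$ by Lemma~\ref{lem:lltconv}, Fact~\ref{fact:kld_breg} gives the exact identity
\[\dkl\Par{\tilt_{\vx'}\exp(-\vhi^{*\tau})\,\big\|\,\tilt_{\vx}\exp(-\vhi^{*\tau})}=D^{\tau\psi}(\vx\|\vx').\]
It would then suffice to know that $\tilt_{\vx}\exp(-\vhi^{*\tau})$ is $\frac1\tau\vhi$-strongly log-concave, i.e.\ $\vhi^{*\tau}\succeq\frac1\tau\vhi$. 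Fact~\ref{fact:slc} would then give $W^{\vhi/\tau}\le D^{\tau\psi}$, which is the desired bound.

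This forward step is the main obstacle, since the needed convexity of $\vhi^{*\tau}$ relative to $\vhi/\tau$ is not obviously available. My first attempt would avoid it by working with the representation~\eqref{eq:llt-loc-process}, $\vy=\sum_{i\le\tau}\va_i$. Each $\va_i\sim\tilt_{\vx}\exp(-\vhi)$ is $1$-$\vhi$-strongly log-concave, so coupling the $\va_i$ and $\va'_i$ independently gives $\E[D^{\vhi}(\va_i\|\va'_i)]\le D^{\psi}(\vx\|\vx')$ via Facts~\ref{fact:kld_breg} and~\ref{fact:slc}. Summing over $i$ gives total cost $\tau D^\psi=D^{\tau\psi}$. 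I would then need a subadditivity-type inequality converting $\sum_i D^{\vhi}(\va_i\|\va'_i)$ into a bound on $D^{\vhi}(\vy\|\vy')$ with the factor $\tau$. This is immediate for quadratic $\vhi$, but in general it may require reformulating the backward step with $\vhi^{*\tau}$ in place of $\vhi$; for instance, one could apply Assumption~\ref{assume:llt} to the pair $(\vhi^{*\tau},\tau\psi)$ so that the $\vy$-space Bregman divergence is $D^{\vhi^{*\tau}}$ throughout. If neither works, I would fall back on proving $\vhi^{*\tau}\succeq\frac1\tau\vhi$ from Assumption~\ref{assume:llt} itself.
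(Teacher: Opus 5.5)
Your backward half-step is valid, but because you invoke Assumption~\ref{assume:llt} with base function $\vhi$, you end up measuring $\vy$-space with $D^{\vhi}$. The forward half-step then requires $\E[D^{\vhi}(\vy\|\vy')]\le\tau D^{\tau\psi}$, and none of your routes proves it.

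\begin{itemize}
\item The direct route needs $\vhi^{*\tau}\succeq\frac1\tau\vhi$. This is false in general, as the paper notes in Appendix~\ref{sec:convexity}. For instance, take $\vhi(y)=y^4$ and restrict the convolution integral to a unit box around $(y/\tau,\dots,y/\tau)$. This gives $\vhi^{*\tau}(y)\le y^4/\tau^3+O(|y|^3)$, so $\vhi^{*\tau}-\frac1\tau\vhi$ tends to $-\infty$ like $-y^4$ and cannot be convex once $\tau\ge 2$.
\item Your ``first attempt'' needs $D^{\vhi}(\sum_i\va_i\|\sum_i\va'_i)\le\tau\sum_iD^{\vhi}(\va_i\|\va'_i)$. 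This also fails beyond quadratics: for $\vhi(y)=y^4$ with $\va_i\equiv a$ and $\va'_i\equiv a'$, homogeneity makes the left side $\tau^4D^{\vhi}(a\|a')$, while the right side is $\tau^2D^{\vhi}(a\|a')$.
\item Your last fallback has no evident path. Assumption~\ref{assume:llt} only compares an LLT $V^\sharp$ with a base function; it never compares $\vhi$ with $\vhi^{*\tau}$. Converting your bound $(\tau+\alpha)U^\sharp\preceq\vhi$ into one against $\vhi^{*\tau}$ would require exactly the missing inequality $\frac1\tau\vhi\preceq\vhi^{*\tau}$.
\end{itemize}

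The repair is the option you mention only in passing, and it is exactly the paper's proof: use $D^{\vhi^{*\tau}}$ as the cost on $\vy$-space throughout.

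\emph{Forward step.} No comparison is needed. The tilt $\tilt_{\vx}\exp(-\vhi^{*\tau})$ is $\vhi^{*\tau}$-strongly log-concave, since its potential differs from $\vhi^{*\tau}$ by a linear term. Facts~\ref{fact:slc} and~\ref{fact:kld_breg}, with $(\vhi^{*\tau})^\sharp=\tau\psi$ from Lemma~\ref{lem:lltconv}, then give a coupling with $\E[D^{\vhi^{*\tau}}(\vy\|\vy')]\le D^{\tau\psi}(\vx'\|\vx)$.

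\emph{Backward step.} Keep your first two bullets, i.e.\ $(1+\frac\alpha\tau)W^{\tau\psi}\le W^{V+\tau\psi}\le\dkl=D^{(V+\tau\psi)^\sharp}(\vy\|\vy')$. But apply Assumption~\ref{assume:llt} to the base function $\vhi^{*\tau}$, whose LLT is $\tau\psi$, with $V\gets V+\tau\psi\succeq(1+\frac\alpha\tau)\tau\psi$. This yields $(1+\frac\alpha\tau)D^{(V+\tau\psi)^\sharp}\le D^{\vhi^{*\tau}}$.

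Chaining the two half-steps over the glued couplings gives the factor $(1+\alpha/\tau)^{-2}$.
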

\begin{proof}
Throughout the proof, we let $\mu_{k + 0.5}, \mu'_{k + 0.5}$ denote the total laws of $\vy_k, \vy'_k$ on an iteration $t$. 
Recall the notation of \eqref{eq:joint-density}.  Let $\gamma_k$ be the coupling of $(\mu_k, \mu_k')$ realizing $W^{\tau\psi}(\mu_k' \| \mu_k)$. Conditioning on the values of $(\vx_k, \vx_k') = (\vx, \vx') \sim \gamma_k$, we have by Fact~\ref{fact:slc} and Fact~\ref{fact:kld_breg} respectively that
\begin{equation}\label{eq:ystep}
W^{\vhi^{*\tau}}\Par{\tpi_{(\tau)}^{Y\mid X=\vx} \,\big\|\, \tpi_{(\tau)}^{Y\mid X=\vx'}} \le \dkl\Par{\tpi_{(\tau)}^{Y\mid X=\vx} \,\big\|\, \tpi_{(\tau)}^{Y\mid X=\vx'}} = D^{\tau\psi}(\vx' \| \vx).
\end{equation}
Now let $\gamma_{k + 0.5}$ be the coupling of $(\mu_{k + 0.5}, \mu'_{k + 0.5})$ which first draws $(\vx, \vx') \sim \gamma_k$, and then conditioned on these values, draws and outputs $(\vy, \vy')$ from the coupling realizing $W^{\vhi^{*\tau}}(\tpi_{(\tau)}^{Y \mid X = \vx} \| \tpi_{(\tau)}^{Y|X=\vx'})$. 

Conditioning on the values of $(\vy, \vy') \sim \gamma_{k + 0.5}$, we have respectively by $V+\tau\psi\succeq (1+\frac\alpha\tau)\tau\psi$, Fact~\ref{fact:slc}, and Fact~\ref{fact:kld_breg} that
\begin{equation}\label{eq:firstpart_x}
\begin{aligned}
\Par{1 + \frac \alpha \tau}W^{\tau\psi}\Par{\tpi_{(\tau)}^{X\mid Y=\vy'} \,\big\|\, \tpi_{(\tau)}^{X\mid Y=\vy}} &\le W^{V + \tau\psi}\Par{\tpi_{(\tau)}^{X\mid Y=\vy'} \,\big\|\, \tpi_{(\tau)}^{X\mid Y=\vy}} \\
    &\le \dkl\Par{\tpi_{(\tau)}^{X\mid Y=\vy'} \,\big\|\, \tpi_{(\tau)}^{X\mid Y=\vy}} = D^{(V + \tau\psi)^\sharp}(\vy \| \vy').
\end{aligned}
\end{equation}
Now by applying Assumption~\ref{assume:llt} and Lemma~\ref{lem:lltconv}, we finally have
\begin{equation}\label{eq:secondpart_x}
\Par{1 + \frac \alpha \tau}D^{(V + \tau\psi)^\sharp}(\vy \| \vy') \le D^{\vhi^{*\tau}}(\vy \| \vy').
\end{equation}
Let $\gamma_{k + 1}$ be the coupling of $(\mu_{k + 1}, \mu'_{k + 1})$ which first draws $(\vy, \vy') \sim \gamma_{k + 0.5}$, and then conditioned on these values, draws and outputs $(\vx, \vx')$ from the coupling realizing $W^{\psi^{*\tau}}(\tpi_{(\tau)}^{X\mid Y=\vy'} \,\big\|\, \tpi_{(\tau)}^{X\mid Y=\vy})$. 

By combining \eqref{eq:ystep}, \eqref{eq:firstpart_x}, \eqref{eq:secondpart_x}, and using the fact that $W^{\tau\psi}(\mu'_{k + 1} \| \mu_{k + 1})$ is an infimum over couplings (one of which is $\gamma_{k + 1}$), we have shown $(1+\frac \alpha \tau)^2W^{\tau\psi}(\mu'_{k + 1} \| \mu_{k + 1}) \le W^{\tau\psi}(\mu_k' \| \mu_k)$, as claimed. 
\end{proof}

\begin{lemma}\label{lem:kl_le_w_psi}
    Consider the same setup as in Lemma~\ref{lem:w_psi_contraction}. It holds that \[
    \dkl(\mu_{k+1}'\|\mu_{k+1})\le \Par{1 + \frac \alpha \tau}W^{\tau\psi}(\mu_k'\|\mu_k).
    \]
\end{lemma}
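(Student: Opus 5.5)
We need to prove $\dkl(\mu'_{k+1}\|\mu_{k+1}) \le (1+\alpha/\tau) W^{\tau\psi}(\mu'_k\|\mu_k)$. The plan is to follow the coupling construction in Lemma~\ref{lem:w_psi_contraction}, but in its final $\vx$-step to stop at KL rather than passing back to a Wasserstein quantity.

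\textbf{Step 1 (joint convexity over the $\vy$-step).} Let $\gamma_{k+0.5}$ be the coupling of $(\mu_{k+0.5},\mu'_{k+0.5})$ built in that proof from $\gamma_k$. Since $\mu_{k+1}$ and $\mu'_{k+1}$ are mixtures of $\tpi^{X\mid Y=\vy}_{(\tau)}$ and $\tpi^{X\mid Y=\vy'}_{(\tau)}$ under this coupling, joint convexity of KL (the data-processing inequality) gives
\[
\dkl(\mu'_{k+1}\|\mu_{k+1}) \le \E_{(\vy,\vy')\sim\gamma_{k+0.5}}\Brack{\dkl\Par{\tpi^{X\mid Y=\vy'}_{(\tau)}\,\big\|\,\tpi^{X\mid Y=\vy}_{(\tau)}}}.
\]

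\textbf{Step 2 (bound the conditional KL).} By Fact~\ref{fact:kld_breg}, applied to the base density $\propto \exp(-V-\tau\psi)$, each inner term equals $D^{(V+\tau\psi)^\sharp}(\vy\|\vy')$. By \eqref{eq:secondpart_x}, which uses Assumption~\ref{assume:llt} and Lemma~\ref{lem:lltconv}, this is at most $(1+\alpha/\tau)^{-1} D^{\vhi^{*\tau}}(\vy\|\vy')$. Since the conclusion allows the larger factor $1+\alpha/\tau$, even the crude bound $D^{(V+\tau\psi)^\sharp}\le D^{\vhi^{*\tau}}$ suffices.

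\textbf{Step 3 (return to $\vx$).} Recall that $\gamma_{k+0.5}$ was built by drawing $(\vx,\vx')\sim\gamma_k$ and then coupling the $Y$-conditionals optimally in $W^{\vhi^{*\tau}}$. Hence
\[
\E_{\gamma_{k+0.5}}\Brack{D^{\vhi^{*\tau}}(\vy\|\vy')} = \E_{\gamma_k}\Brack{W^{\vhi^{*\tau}}\Par{\cdot\|\cdot}}.
\]
By \eqref{eq:ystep} (Fact~\ref{fact:slc}, using that $\tpi^{Y\mid X}$ is $\vhi^{*\tau}$-strongly log-concave, together with Fact~\ref{fact:kld_breg}), this is at most $\E_{\gamma_k}[D^{\tau\psi}(\vx'\|\vx)] = W^{\tau\psi}(\mu'_k\|\mu_k)$. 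Combining the three steps gives the claim, in fact with the better factor $(1+\alpha/\tau)^{-1}$.

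\textbf{Main obstacle.} The main point needing care is orientation. The Bregman divergence $D^{\vhi^{*\tau}}(\vy\|\vy')$ arising from the $X$-step KL must be oriented to match the Wasserstein cost produced in \eqref{eq:ystep}. Here the asymmetric conventions from Lemma~\ref{lem:w_psi_contraction} must be followed exactly, possibly by choosing the optimal coupling in the appropriate direction. A second check is that Fact~\ref{fact:kld_breg} applies to the unnormalized base $\exp(-V-\tau\psi)$; this holds because normalization constants cancel in tilts. Measurability of the conditional optimal couplings is assumed, as it is in the preceding lemma.
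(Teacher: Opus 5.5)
Your proposal is correct and follows the paper's proof. The paper likewise chains \eqref{eq:ystep}, the KL--Bregman identity in \eqref{eq:firstpart_x}, and \eqref{eq:secondpart_x} over the coupling $\gamma_{k+0.5}$ to obtain $(1+\alpha/\tau)\,\E_{\gamma_{k+0.5}}[\dkl(\tpi_{(\tau)}^{X\mid Y=\vy'}\|\tpi_{(\tau)}^{X\mid Y=\vy})] \le W^{\tau\psi}(\mu_k'\|\mu_k)$, and then applies Fact~\ref{fact:lem3_lst21} (your joint-convexity Step 1) with $\vz \gets (\vy,\vy')$. As you noticed, this gives the factor $(1+\alpha/\tau)^{-1}$, which is what the exponent $2k-1$ in Theorem~\ref{thm:under-conjecture} needs; the orientation you flagged is consistent, since the coupling realizing $W^{\vhi^{*\tau}}(\tpi_{(\tau)}^{Y\mid X=\vx}\|\tpi_{(\tau)}^{Y\mid X=\vx'})$ minimizes exactly $\E[D^{\vhi^{*\tau}}(\vy\|\vy')]$.
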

\begin{proof}
Following Lemma~\ref{lem:w_psi_contraction} up to \eqref{eq:firstpart_x}, we have some coupling $\gamma_{k + 0.5}$ of $(\vy_k, \vy'_k) = (\vy, \vy')$ such that
\[\Par{1 + \frac \alpha \tau}\E_{(\vy, \vy') \sim \gamma_{k + \half}}\Brack{\dkl\Par{\tpi_{(\tau)}^{X \mid Y = \vy'} \,\big\|\, \tpi_{(\tau)}^{X \mid Y = \vy}}} \le W^{\tau\psi}\Par{\mu'_k \| \mu_k}.\]

    We now apply Fact~\ref{fact:lem3_lst21}, analogously to its use in Theorem 1, \cite{LeeST21}, with $\vz \gets (\vy, \vy')$.
\end{proof}

\begin{fact}[Lemma 3, \cite{LeeST21}]\label{fact:lem3_lst21}
    Let $P_\vz$ and $Q_\vz$ be distributions supported on $\calX$ indexed by $\vz$, a random variable distributed as $\pi_\vz$. Let $\Tilde{P}$ be the joint distribution of $(\vx,\vz)$ for $\vx\sim P_\vz$ and $\vz\sim\pi_\vz$ and $\Tilde{Q}$ be the joint distribution of $(\vx,\vz)$ as $\vx\sim Q_\vz$. Then, \[
    \dkl(P\|Q)\le \E_{\vz\sim\pi_\vz}[\dkl(P_\vz\|Q_\vz)].
    \]
\end{fact}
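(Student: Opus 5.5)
This is the data processing (chain rule) inequality for KL divergence. I would prove it by comparing the joint laws $\tilde P$ and $\tilde Q$ of $(\vx,\vz)$ and then discarding the $\vz$-coordinate. The key point is that both joint laws share the same $\vz$-marginal $\pi_\vz$. Here $P$ and $Q$ denote the $\vx$-marginals of $\tilde P$ and $\tilde Q$, i.e.\ $P = \int P_\vz \,\pi_\vz(\dd\vz)$ and $Q = \int Q_\vz\,\pi_\vz(\dd\vz)$.

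\textbf{Step 1 (chain rule for the joint).} The joint densities factor as $\tilde P(\vx,\vz) = \pi_\vz(\vz)P_\vz(\vx)$ and $\tilde Q(\vx,\vz) = \pi_\vz(\vz)Q_\vz(\vx)$. The likelihood ratio is therefore $\frac{\dd\tilde P}{\dd\tilde Q}(\vx,\vz) = \frac{\dd P_\vz}{\dd Q_\vz}(\vx)$, since the $\pi_\vz$ factors cancel. Integrating its logarithm against $\tilde P$ and applying Fubini gives
\[\dkl(\tilde P\|\tilde Q) = \E_{\vz\sim\pi_\vz}\Brack{\dkl(P_\vz\|Q_\vz)}.\]
If the right-hand side is infinite, there is nothing to prove. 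Otherwise, $P_\vz\ll Q_\vz$ for $\pi_\vz$-a.e.\ $\vz$, so $\tilde P\ll\tilde Q$ and the ratio above is well defined.

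\textbf{Step 2 (marginalization does not increase KL).} It remains to show $\dkl(P\|Q)\le\dkl(\tilde P\|\tilde Q)$. First, $P\ll Q$ follows from $\tilde P\ll\tilde Q$. Write $r = \frac{\dd\tilde P}{\dd\tilde Q}$. Then $\frac{\dd P}{\dd Q}(\vx) = \E_{\tilde Q}[r\mid \vx]$, the conditional expectation under $\tilde Q$ given the first coordinate. Since $t\mapsto t\log t$ is convex, the conditional Jensen inequality gives, pointwise in $\vx$,
\[\E_{\tilde Q}[r\mid\vx]\log\E_{\tilde Q}[r\mid\vx]\le \E_{\tilde Q}[r\log r\mid\vx].\]
Taking expectation over $\vx\sim Q$ yields $\dkl(P\|Q)\le \E_{\tilde Q}[r\log r] = \dkl(\tilde P\|\tilde Q)$. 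Alternatively, one can apply the log-sum inequality directly to the mixtures.

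\textbf{Main obstacle.} The only delicate point is measure-theoretic. We must justify the disintegration, i.e.\ that the density of the $\vx$-marginal ratio is the conditional expectation of the joint ratio, and we must handle the integrability of $r\log r$. The standard fix is to note that $t\log t\ge -1/e$, so that Fubini/Tonelli applies to the positive and negative parts separately. In the finite-density setting used throughout the paper, everything can be written with explicit densities, and Jensen is applied to the inner integral $\int \frac{P_\vz(\vx)}{Q_\vz(\vx)}\cdot\frac{Q_\vz(\vx)\pi_\vz(\vz)}{Q(\vx)}\,\dd\vz$, which is an average under the probability weights $Q_\vz(\vx)\pi_\vz(\vz)/Q(\vx)$.
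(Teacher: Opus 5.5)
Your proof is correct. The paper gives no proof of its own: it imports this as Lemma 3 of \cite{LeeST21}. Your argument is the standard proof of that fact: the chain rule gives $\dkl(\tilde P\|\tilde Q)=\E_{\vz\sim\pi_\vz}[\dkl(P_\vz\|Q_\vz)]$ because the two joint laws share the $\vz$-marginal, and data processing under marginalization to $\vx$ (equivalently, joint convexity of KL) finishes it.
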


\section{Comparison of Lemma \ref{lem:dual-poincare} with Gaussian Setting}\label{sec:convexity}

In this section, we give two variations on Lemma \ref{lem:dual-poincare} specifically in the Gaussian setting, and then identify exactly where in these proofs things break in the more general setting. This seems to indicate our proof of Lemma~\ref{lem:dual-poincare} is quite surprising and the techniques are somewhat brittle.

\paragraph{Proofs in the Gaussian setting.}
Let $\vhi:=\frac{1}{2}\norm{\cdot}^2+\log \int \exp(-\frac{1}{2}\norm{\vx}^2)\,\dd\vx$ so that $\exp(-\vhi)$ is a density.  From Proposition \ref{prop:renormalization-main} we have explicitly
\begin{equation}\label{eq:renorm-gaussian}
    \begin{aligned}
        \tpi_{(\tau)}^Y(\vy)
        &=\exp(-V_\tau(\vy)-\vhi^{*\tau}(\vy))\\
        &\propto \int \exp\Par{-\frac{\tau}{2}\norm{\frac{\vy}{\tau}-\vz}_2^2}\pi(\vz)\,\dd\vz.
    \end{aligned}
\end{equation}

\begin{proof}[Proof via explicit convolution]
    We use the second line of \eqref{eq:renorm-gaussian} and follow the proof of Proposition 1, \cite{ShiTZ25}.  Here, $\tpi_{(\tau)}^Y$ is almost of the form of a Gaussian convolution.  First define
    \[\nu_\tau(\vy)\propto \exp\Par{-\frac{\tau}{2}\norm{\vy-\vz}_2^2}\pi(\vz)\,\dd\vz.\]
    Then this is the convolution of two densities that satisfy Poincar\'e inequalities with constants $\alpha$ and $\tau$, respectively. Therefore, by Lemma~\ref{lem:poincare-conv}, $\nu_\tau$ satisfies a $\frac{\alpha\tau}{\alpha+\tau}$-Poincar\'e inequality.  Now $\nu_\tau$ and $\tpi_{(\tau)}^Y$ are equivalent up to a $\frac{1}{\tau}$-Lipschitz mapping; thus, by Proposition 2.3.3, \cite{Chewi23}, $\tpi_{(\tau)}^Y$ satisfies a $\frac{\alpha}{\tau(\alpha+\tau)}$-Poincar\'e inequality.
\end{proof}
\begin{remark}
    If $\pi$ satisfies a log-Sobolev inequality with constant $\alpha$, the result can be upgraded to a log-Sobolev inequality with the same derived constant, again by properties of the Gaussian convolution.
\end{remark}

\begin{proof}[Proof via convexity]
    We use the first line of \eqref{eq:renorm-gaussian}; write $V:=-\log \pi$ and assume $V\succeq \alpha\psi$.  By Lemma \ref{lem:renorm-potential-derivatives}, we have $-\nabla^2 V_\tau=\nabla^2(V+\tau \psi)^\sharp$; if $V\succeq \alpha\psi$ then certainly $V+\tau\psi\succeq (\alpha+\tau)\psi$.  In the Gaussian setting, Assumption \ref{assume:llt} holds.  We then have 
\begin{equation}\label{eq:ass2-convexity}
    \begin{aligned}
        -(\alpha+\tau)V_\tau=(\alpha+\tau) (V+\tau\psi)^\sharp \preceq \vhi
        \implies \nabla^2 V_\tau \succeq -\frac{1}{\alpha+\tau}\nabla^2 \vhi.
    \end{aligned}
\end{equation} 
Next, as $\exp\Par{-\vhi^{*\tau }}\propto \exp(-\frac{1}{2\tau}\norm{\cdot}_2^2)$, it is clear that 
\begin{equation}\label{eq:llt-conv-convexity}
    \nabla^2 \vhi^{*\tau}\succeq  \frac{1}{\tau}\nabla^2\vhi.
\end{equation}
Indeed, this is actually an equality here.  Then combining \eqref{eq:ass2-convexity} and \eqref{eq:llt-conv-convexity}, we have
\begin{align*}
    -\nabla^2 \log \tpi_{(\tau)}^Y
    &=\nabla^2 \varphi^{*\tau}+\nabla^2 V_\tau\\
    &\succeq \Par{\frac{1}{\tau}-\frac{1}{\alpha+\tau}}\nabla^2 \varphi\\
    &=\frac{\alpha}{\tau(\alpha+\tau)}\nabla^2 \varphi\\
    &=\frac{\alpha}{\tau(\alpha+\tau)}\mi_d,
\end{align*}
as claimed. Now the result follows from Fact~\ref{fact:brascamp_lieb} with quadratic $\vhi$.
\end{proof}

\paragraph{Breakdown for general $\vhi$.}
The proof via convolution does not hold for general $\vhi$ because we do not have an explicit representation of $\tpi_{(\tau)}^Y$ as an object close to a convolution.  

The proof via convexity is brittle in two respects.  Generally, Assumption \ref{assume:llt} may not hold.  Moreover, it is not necessarily the case that $\nabla^2 \vhi^{*\tau}\succeq \frac{1}{\tau}\nabla^2 \vhi$.  Indeed, consider the proof of Lemma 25, \cite{ShiTZ25} for general $\vhi$.  We would like 
\[f(\vx, \vy):=\exp(-\vhi(\vy))\exp(-\vhi(\vx-\vy))\exp(\gamma \vhi(\vx))\]
to be log-concave for $\gamma\leq \frac{1}{2}$, and then integrate over $\vy$ and apply Pr\'ekopa-Leindler.  We have
\begin{align*}
    -\nabla^2 f(\vx, \vy)
    &=\begin{pmatrix}
             \mathbf{0}_{d\times d} &  \mathbf{0}_{d\times d}\\
             \mathbf{0}_{d\times d} &  \nabla^2\vhi(\vy)
        \end{pmatrix}
        + \begin{pmatrix}
            \nabla^2\vhi(\vx - \vy) & -\nabla^2\vhi(\vx - \vy) \\
            -\nabla^2\vhi(\vx-\vy) & \nabla^2\vhi(\vx-\vy)
        \end{pmatrix}
        -\gamma\begin{pmatrix}
            \nabla^2\vhi(\vx) & \mathbf{0}_{d\times d}\\
             \mathbf{0}_{d\times d} &  \mathbf{0}_{d\times d}
        \end{pmatrix}\\
        &=\begin{pmatrix}
            \nabla^2\vhi(\vx-\vy)-\gamma \nabla^2\vhi(\vx) & -\nabla^2 \vhi(\vx-\vy) \\
            -\nabla^2\vhi(\vx-\vy) & \nabla^2\vhi(\vy)+\nabla^2\vhi(\vx-\vy)
        \end{pmatrix}.
\end{align*}
For $-\nabla^2 f(\vx, \vy)$ to be positive semidefinite, it is necessary that both diagonal blocks, i.e., $\nabla^2\vhi(\vx-\vy)-\gamma \nabla^2\vhi(\vx)$ and $\nabla^2\vhi(\vy)+\nabla^2\vhi(\vx-\vy)$ are positive semidefinite. There are one-dimensional counterexamples to this (take $\vhi(x) = x^4$ and $x=y$), and indeed, in general, counterexamples to $\nabla^2 \vhi^{*\tau}\succeq \frac{1}{\tau}\nabla^2 \vhi$.

\end{document}